\documentclass[twoside,reqno,11pt,a4paper]{amsart}
\usepackage{amsmath, amssymb, amsthm}
\usepackage{url}
\usepackage[hidelinks]{hyperref}
\usepackage{mathtools}
\usepackage{upgreek}
\usepackage{environ}
\usepackage{xcolor}
\numberwithin{equation}{section}
\theoremstyle{plain}
\newtheorem{thm}{Theorem}[section]
\newtheorem{lem}[thm]{Lemma}

\newtheorem{cor}[thm]{Corollary}
\newtheorem{prop}[thm]{Proposition}

\theoremstyle{definition}

\theoremstyle{remark}

\DeclareMathOperator{\Gal}{Gal}
\DeclareMathOperator{\Fix}{Fix}
\DeclareMathOperator{\Frob}{Frob}
\newcommand{\calZ}{\mathcal Z}
\renewcommand{\le}{\leqslant}
\renewcommand{\leq}{\leqslant}
\renewcommand{\ge}{\geqslant}
\renewcommand{\geq}{\geqslant}
\begin{document}
\title[Moments of Dedekind Zeta Functions]{Sharp Upper Bounds for Moments of Dedekind Zeta Functions}
    \author{Benjamin Durkan}
\address{\scriptsize Benjamin Durkan, Department of Mathematics, The University of Manchester, Oxford Road, Manchester, M13 9PL}
\email{benjamin.durkan@manchester.ac.uk}
	\author{Nilmoni Karak}
	\address{ \scriptsize Nilmoni Karak, Department of Mathematics,
		Indian Institute of Technology	Kharagpur,
		Kharagpur-721302, India}
	\email{nilmonikarak@gmail.com, nilmonimath@kgpian.iitkgp.ac.in}

	\author{Kamalakshya Mahatab}
	\address{ \scriptsize  Kamalakshya Mahatab, Department of Mathematics,
		Indian Institute of Technology	Kharagpur,
		Kharagpur-721302, India}
	\email{accessing.infinity@gmail.com, kamalakshya@maths.iitkgp.ac.in}
	
\subjclass[2020]{Primary 11M50; Secondary 11R42}
\keywords{Dedekind zeta functions, shifted moments, number fields,
permutation characters, Generalised Riemann Hypothesis}

\begin{abstract}
	Assuming the Generalised Riemann Hypothesis, we establish conjecturally sharp upper bounds for shifted moments of products of Dedekind zeta functions of arbitrary number fields. This improves results of Milinovich and Turnage-Butterbaugh and extends a recent result of Hagen. As applications, we obtain mean-square bounds for short-interval sums of the coefficients of Dedekind zeta functions and upper bounds for the large deviations of Dedekind zeta functions. Our results apply to both Galois and non-Galois extensions.
\end{abstract}

	\maketitle
	 
\section{Introduction}      
Understanding the behaviour of $L$-functions on the critical line is a central problem in analytic number theory. A fundamental approach to this problem is the study of moments, which have numerous applications to non-vanishing, value distribution, and correlations of $L$-functions. Assuming the Riemann Hypothesis (RH), Soundararajan \cite{Sound} obtained almost sharp upper bounds for all positive moments of the Riemann zeta function. He showed that 
	\begin{equation*}
	\int_{T}^{2T}\left| \zeta\left(\tfrac{1}{2}+it\right)\right|^{2k}\;dt\ll_{k,\varepsilon} T(\log T)^{k^2 +\varepsilon},
	\end{equation*}
	for any $k>0$ and every $\varepsilon>0$.
	Subsequently, Harper \cite{Harper} removed the factor $(\log T)^{\varepsilon}$. Assuming RH, he proved that, for every fixed $k>0$,
	\begin{equation}\label{harper-upper-bound}
\int_{T}^{2T}\left| \zeta\left(\tfrac{1}{2}+it\right)\right|^{2k}\;dt\ll_{k} T(\log T)^{k^2}.
	\end{equation}
The corresponding lower bound is known unconditionally. More precisely, for every fixed real $k>0$,
\begin{equation*}
\int_T^{2T}|\zeta(\tfrac{1}{2}+it)|^{2k}\;dt\gg T(\log T)^{k^2}.
\end{equation*}
For $k>1$, unconditional lower bounds of the conjectured order of magnitude which vary continuously with $k$ were established by Radziwi{\l}{\l} and Soundararajan \cite{Raziwill-Sound}; Heap and Soundararajan \cite{Heap-Sound} later extended the range to $k>0$. Effective unconditional lower bounds can also be obtained (cf. \cite{Conrey-Ghosh-III, DP, Page, Sound-mean-values}).
This leads naturally to shifted moments, which capture correlations between values of $L$-functions at distinct points on the critical line. For the Riemann zeta function, shifted moments have been investigated by several authors. Chandee \cite[Theorem~1.1]{Chandee_shifted} stated that, under RH, for sufficiently large $T$ and every $\varepsilon>0$,
\begin{align*}
	\int_{T}^{2T} \prod_{i=1}^{r}\left| \zeta\left(\tfrac{1}{2}+i(t+b_i)\right)\right|^{a_i}\ll_{\mathbf{a},\varepsilon} T(\log T)^{(a_1^2+\cdots + a_{r}^2)/4+\varepsilon} \prod_{1\le  i<j\le  r}\left(\min\left\{\tfrac{1}{|b_i-b_j|},\log T \right\}\right)^{\tfrac{a_ia_j}{2}},
\end{align*}
	where $\mathbf{a}=(a_1,\ldots,a_r)$ consists of fixed positive real numbers and the distinct shifts $b_i=o(T)$ satisfy $b_i-b_j=O(1)$, and where each of the limits $b_i\log T$ and $(b_i-b_j)\log T$ exists in $\mathbb R\cup\{\pm\infty\}$. Assuming RH, Ng, Shen, and Wong \cite[Theorem~1.3]{Ng-Shen-wong} obtained a sharp two-shift bound for equal factor exponents $2k$, where $k\ge1/2$, under $|b_1|,|b_2|\le T/2$ and $|b_1+b_2|\le T^{0.6}$; their correlation factor is $\min\{|b_1-b_2|^{-1},\log T\}$ for $|b_1-b_2|\le1/100$ and $\log(2+|b_1-b_2|)$ otherwise. Under RH, Curran \cite[Theorem~1.1]{Curran-Correlation} extended the sharp bound to any fixed number of nonnegative factor exponents and shifts $|b_i|\le T/2$, and obtained stronger bounds when the differences $|b_i-b_j|$ are unbounded. More precisely, in the present exponent convention, he proved that
	\begin{equation}\label{curran-upper-bound}
		\int_{T}^{2T} \prod_{i=1}^{r}\left| \zeta\left(\tfrac{1}{2}+i(t+b_i)\right)\right|^{a_i}\ll_{\mathbf{a}} T(\log T)^{(a_1^2+\cdots + a_{r}^2)/4} \prod_{1\le  i< j\le  r}\left| \zeta(1+i(b_i-b_j)+1/\log T)\right|^{\tfrac{a_ia_j}{2}}.
	\end{equation}

	This article focuses on extending the upper bounds \eqref{harper-upper-bound} and \eqref{curran-upper-bound} to moments of the Dedekind zeta functions associated with arbitrary number fields. The Dedekind zeta function associated with the number field $K$ is defined as 
	\begin{equation}\label{dedekind-zeta}
		\zeta_K(s)= \sum_{0\ne\mathfrak{a}\subseteq \mathcal{O}_K} \frac{1}{N(\mathfrak{a})^s} = \prod_{0\ne\mathfrak{p}\subseteq \mathcal{O}_K} \left( 1- \frac{1}{N(\mathfrak{p})^s}\right)^{-1}
	\end{equation}
	for $\Re(s)>1$, where the sum and the product are taken over nonzero integral ideals $\mathfrak{a}$ and nonzero prime ideals $\mathfrak{p}$ of $\mathcal{O}_K$, respectively, and $N(\mathfrak a)=|\mathcal O_K/\mathfrak a|$.  The function $\zeta_K(s)$ extends meromorphically to $\mathbb{C}$, with a simple pole at $s=1$, and satisfies the functional equation $\zeta_K(1-s)= \chi_K(s) \zeta_K(s)$, where \cite[pp. 254-255]{S. Lang}
	\begin{equation*}
		\chi_K(s)= (2(2\pi)^{-s}\Gamma(s))^{[K:\mathbb{Q}]} |D_{K}|^{s-1/2} (\cos \pi s/2)^{r_1+r_2} (\sin \pi s/2)^{r_2}.
	\end{equation*}
	If $(r_1,r_2)$ is the signature of $K$, then the orders of the trivial zeros are $r_2$ at negative odd integers, $r_1+r_2$ at nonzero negative even integers, and $r_1+r_2-1$ at zero. The Generalised Riemann Hypothesis (GRH) for $\zeta_K(s)$ states that all the non-trivial zeros of $\zeta_K(s)$ lie on the critical line $\Re(s)=1/2$. It is well known that the Dedekind zeta function $\zeta_K(s)$ factors as a product of Artin $L$-functions. Specifically, if $K/\mathbb{Q}$ is a finite Galois extension, then
	\begin{equation*}
		\zeta_K(s)=\prod_{\chi\in\widehat{\mathrm{Gal}(K/\mathbb{Q})}}
		L(s,\chi)^{\chi(1)}.
	\end{equation*}
	Here $\widehat{\mathrm{Gal}(K/\mathbb{Q})}$ denotes the set of irreducible characters of $\mathrm{Gal}(K/\mathbb{Q})$. Furthermore,
	\begin{equation*}
		\sum_{\chi\in\widehat{\mathrm{Gal}(K/\mathbb{Q})}}
		\chi(1)^2
		=
		\left|\mathrm{Gal}(K/\mathbb{Q})\right|
		=
		[K:\mathbb{Q}].
	\end{equation*}
Langlands reciprocity predicts automorphy for the irreducible Artin factors, and applying the unitary-family moment heuristic and the recipe of Conrey, Farmer, Keating, Rubinstein, and Snaith \cite{CFKRS-I,CFKRS-II} to this factorisation predicts that, for any fixed real $k>0$,
\begin{equation}\label{conjecture-moment-dedekind}
	\int_{T}^{2T}\left|\zeta_K\!\left(\tfrac12+it\right)\right|^{2k}\;dt
	\sim
	C_{k,K}\,T(\log T)^{[K:\mathbb{Q}]k^2}
\end{equation}
as $T\to\infty$, where $C_{k,K}$ is a positive constant depending only on $k$ and $K$. A detailed discussion of this conjecture appears in the work of Heap \cite{Heap1}. The conjecture \eqref{conjecture-moment-dedekind} is already known for $k=1$ when $K$ is a quadratic extension of $\mathbb{Q}$. Write $K=\mathbb{Q}(\sqrt{D_K})$, where $D_K$ is the fundamental discriminant. Motohashi \cite{Motohasi} showed that
\begin{equation*}
	\int_{T}^{2T} \left| \zeta_K\left(\tfrac{1}{2}+it\right)\right|^{2}\;dt
	\sim
	\frac{6}{\pi^2}L\left(1,\chi_{D_K}\right)^2
	\prod_{p\mid D_K}\left(1+\frac{1}{p}\right)^{-1}
	T(\log T)^{2},
\end{equation*}
where $L(s,\chi_{D_K})$ denotes the Dirichlet $L$-function associated with the primitive quadratic character $\chi_{D_K}(n)=\left(\frac{D_K}{n}\right)$, and $\zeta_K(s)=\zeta(s)L(s,\chi_{D_K})$. In support of the above prediction \eqref{conjecture-moment-dedekind}, for every finite Galois extension $K/\mathbb{Q}$, Akbary and Fodden \cite{Akbary-Fodden} established that for any rational number $k>0$ and for sufficiently large $T$,
	\begin{equation*}
			\int_{T}^{2T} \left| \zeta_K\!\left(\tfrac{1}{2}+it\right)\right|^{2k}\;dt 
		\gg_{K,k} T(\log T)^{[K:\mathbb{Q}]k^2}.
	\end{equation*}
	Subsequently, based on the method of Radziwi{\l}{\l} and Soundararajan \cite{Raziwill-Sound}, Sono \cite{K.Sono} extended this lower bound to all real $k>1$.

	On the other hand, using the method of Soundararajan \cite{Sound}, Milinovich and Turnage-Butterbaugh \cite{M-T} proved that, assuming GRH, if $K$ is a finite Galois extension of $\mathbb{Q}$, then for any fixed real $k>0$ and arbitrary $\varepsilon>0$,
	\begin{equation}\label{bound-milinovich-turnage}
		\int_{T}^{2T} \left| \zeta_K\!\left(\tfrac{1}{2}+it\right)\right|^{2k}\;dt 
		\ll_{K,k,\varepsilon} T(\log T)^{[K:\mathbb{Q}]k^2+\varepsilon},
	\end{equation}
where $T$ is sufficiently large.
Recently, Hagen \cite{Hagen} improved this bound for finite solvable Galois extensions $K/\mathbb{Q}$, proving that, for sufficiently large $T$,
\begin{equation*}
	\int_{T}^{2T} \left| \zeta_K\!\left(\tfrac{1}{2}+it\right)\right|^{2k}\,dt 
	\ll_{K,k} T(\log T)^{[K:\mathbb{Q}]k^2}.
\end{equation*}
The general shifted Dedekind-zeta problem for arbitrary fixed fields is not covered by those results. In this paper, we prove the conjecturally optimal upper bound in the broader case of all Galois and non-Galois extensions.

While the above results focus on moments of the Dedekind zeta function, it is natural to consider their shifted analogues.  Such moments have been widely studied for the Riemann zeta function \cite{Chandee_shifted,Curran-Correlation} and Dirichlet $L$-functions \cite{Munch,szabo}, but remain largely unexplored in the setting of Dedekind zeta functions. Very recently, Hsu and Wong \cite{Hsu-Wong} studied the log-correlations for shifted Dedekind zeta functions of quadratic fields. We prove the corresponding sharp shifted upper bounds for arbitrary fixed number fields.

	Let $r$ be a positive integer. Fix an algebraic closure $\overline{\mathbb Q}$ and embeddings $K_i\hookrightarrow\overline{\mathbb Q}$ for the fixed number fields $K_1,\ldots,K_r$.  Let $L\subset\overline{\mathbb Q}$ be the compositum of the corresponding Galois closures, regard every $K_i$ as a subfield of $L$, and set $G=\Gal(L/\mathbb Q)$ and $H_i=\Gal(L/K_i)$.  We use left cosets $G/H_i=\{gH_i:g\in G\}$ with the left action of $G$; the map $gH_i\mapsto g|_{K_i}$ identifies $G/H_i$ with the embeddings of $K_i$ in $L$.  Set
\begin{equation*}
\chi_i(\sigma)=|\Fix(\sigma:G/H_i\to G/H_i)|,\quad\sigma\in G.
\end{equation*}
For $1\le  i,j\le  r$, define
\begin{equation*}
        \calZ_{ij}(s)=
        \prod_{H_i gH_j\in H_i\backslash G/H_j}
        \zeta_{L^{H_i\cap gH_jg^{-1}}}(s).
\end{equation*}
This product is independent of the chosen representatives.  Indeed, replacing $g$ by $h_i g h_j$, with $h_i\in H_i$ and $h_j\in H_j$, replaces $H_i\cap gH_jg^{-1}$ by the conjugate subgroup $h_i(H_i\cap gH_jg^{-1})h_i^{-1}$.  So the fixed fields, $L^{H_i\cap gH_jg^{-1}}$ and $L^{h_i(H_i\cap gH_jg^{-1})h_i^{-1}}$ are $\mathbb Q$-isomorphic, and therefore have the same Dedekind zeta function.
For $T\ge3$, real shifts $\mathbf b=(b_1,\ldots,b_{r})$, and positive exponents $\mathbf a=(a_1,\ldots,a_{r})$, we set
\begin{equation*}
        B(T,\mathbf b,\mathbf a)=
        \prod_{i,j=1}^{r}
        \left|
        \calZ_{ij}\left(1+\frac1{\log T}+i(b_i-b_j)\right)
        \right|^{a_i a_j/4}.
\end{equation*}
We are now ready to state our main theorem.
	\begin{thm}\label{main-theorem}
Let $K_1,\ldots,K_r$ and $L$ be as above, and assume GRH for $\zeta_L(s)$. Let $\mathcal A\subset(0,\infty)^r$ be compact. There is $T_0=T_0(K_1,\ldots,K_r,\mathcal A)\ge3$ such that, for $T\ge T_0$ and $\mathbf a\in\mathcal A$, uniformly for real shifts satisfying $|b_i|\le T/2$,
		\begin{align*}
			\int_{T}^{2T} \prod_{j=1}^{r}\left| \zeta_{K_j}\left(\tfrac{1}{2}+i(t+b_j)\right)\right|^{a_j}\,dt
            \ll_{K_1,\ldots,K_{r},\mathcal A}
            T B(T,\mathbf b,\mathbf a).
		\end{align*}
		\end{thm}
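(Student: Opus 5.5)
We follow Harper's method for sharp moment bounds, in the multi-shift form used by Curran, and replace every prime-sum input by an estimate for the Dedekind zeta functions that comes from the Chebotarev density theorem under GRH.

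\textbf{Step 1: the Dirichlet polynomial.} The starting point is a Soundararajan-type upper bound for $\log|\zeta_{K_j}(\tfrac12+it)|$ under GRH for $\zeta_L$ (which implies GRH for each $\zeta_{K_j}$). By the Aramata--Brauer theorem, $\zeta_{K_j}/\zeta$ is entire, but we do not need this; we use the explicit formula for $\zeta_{K_j}$ directly. It gives, for $2\le x\le T^2$,
\begin{equation*}
\log|\zeta_{K_j}(\tfrac12+it)|\le \Re\sum_{p^m\le x}\frac{c_j(p^m)}{m\,p^{m(1/2+1/\log x)+imt}}\frac{\log(x/p^m)}{\log x}+ [K_j:\mathbb Q]\frac{\log T}{\log x}+O_{K_j}(1).
\end{equation*}
Here $c_j(p^m)=\chi_j(\Frob_p^m)$ for unramified $p$, and the finitely many ramified primes contribute $O(1)$. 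The prime-square terms contribute a bounded amount after averaging. They are handled, as in Harper's work, by a separate small set of exceptional $t$.

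\textbf{Step 2: Harper's iterative decomposition.} Split the primes into the ranges $T^{\beta_{i-1}}<p\le T^{\beta_i}$, with $\beta_i$ growing geometrically up to a small fixed $\beta_I$. For each $t$, locate the first range in which the truncated sum $\sum_j a_j\Re P_{i,j}(t+b_j)$ is large. On the good set, replace $\exp(\sum_j a_j\Re P)$ by truncated Taylor polynomials. On each bad set, use the large value of a single short prime polynomial together with Markov's inequality at a high moment. The resulting mean values of products of Dirichlet polynomials are evaluated by the standard mean value theorem, since their lengths stay below $T^{1/10}$. The lengths, truncation parameters and $\beta$'s can be chosen exactly as in Harper's argument, because everything depends only on $\sum_j a_j$ and on $\max_p|c_j(p)|\le\max_j[K_j:\mathbb Q]$. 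The output is the bound
\begin{equation*}
\ll T\exp\Big(\sum_{p\le T^{\beta_I}}\frac{1}{4p}\Big|\sum_{j}a_j c_j(p)p^{-ib_j}\Big|^2\Big)
\end{equation*}
on the main piece, with smaller contributions from the bad sets.

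\textbf{Step 3 (key arithmetic input).} Expanding the square gives $\sum_{i,j}\frac{a_ia_j}{4}\Re\sum_p \chi_i(\Frob_p)\chi_j(\Frob_p)p^{-1-i(b_i-b_j)}$. By Mackey's formula, the product of the two permutation characters is the permutation character of $G/H_i\times G/H_j$. Decomposing that set into $G$-orbits gives
\begin{equation*}
\chi_i\chi_j=\sum_{H_igH_j}\mathrm{Ind}_{H_i\cap gH_jg^{-1}}^G 1,
\end{equation*}
and the right-hand side is the Artin character attached to $\calZ_{ij}$. Hence for unramified $p$ the coefficient $\chi_i\chi_j(\Frob_p)$ is exactly the prime coefficient of $\log\calZ_{ij}$. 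Standard zero-free and pole information for $\log\zeta_F(1+1/\log T+i\tau)$ then yields
\begin{equation*}
\Re\sum_{p\le T^{\beta_I}}\frac{\chi_i\chi_j(\Frob_p)}{p^{1+i\tau}}=\log|\calZ_{ij}(1+1/\log T+i\tau)|+O_{\mathcal A,K}(1).
\end{equation*}
This holds uniformly for $|\tau|\le T$. For that uniformity we use GRH for the fields $L^{H}$, which follows from GRH for $\zeta_L$ since each $\zeta_{L^H}$ divides $\zeta_L$ in the Aramata--Brauer sense. Equivalently, one can use the Chebotarev density theorem with error term under GRH, together with partial summation. Summing over $i,j$ produces exactly $B(T,\mathbf b,\mathbf a)$.

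\textbf{Main obstacle.} The hardest part is the uniformity in the shifts when $|b_i-b_j|$ is large, up to $T$. Replacing the truncated prime sum by $\log|\calZ_{ij}|$ needs the bound
\begin{equation*}
\sum_{T^{\beta_I}<p}\frac{\chi_i\chi_j(\Frob_p)}{p^{1+1/\log T+i\tau}}=O(1),
\end{equation*}
and this sum is not absolutely bounded. We would handle it as Curran does. The sum is written as an integral of $\log\calZ_{ij}$ against a kernel, and the GRH bound $\log\zeta_F(\sigma+i\tau)\ll \frac{(\log|\tau|)^{2-2\sigma}}{\log\log|\tau|}$ is used for $\sigma$ near $1$. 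The difficulty is that $\chi_i\chi_j$ is not a nonnegative combination of irreducible characters in any way we can exploit directly, so positivity arguments are unavailable. Working with the genuine zeta functions $\zeta_{L^H}$, rather than Artin $L$-functions, is what avoids any need for Artin's holomorphy conjecture.
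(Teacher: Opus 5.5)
Your overall architecture agrees with the paper's. Both combine the Chandee/Soundararajan majorants into the coefficient $\tfrac12\sum_j a_j\Lambda_{K_j}(p)p^{-ib_j}$, run Harper's block decomposition, and identify the diagonal with $\log B$ through $\chi_i\chi_j=\sum_{H_igH_j}\mathrm{Ind}_{H_i\cap gH_jg^{-1}}^G\mathbf 1$.

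\textbf{The gap is in Step 1, at the prime squares.} The exceptional-set treatment of prime squares in Harper's argument (reproduced in the proof of Lemma~\ref{third-main-lemma}) only covers $p\le\log T$. In the Soundararajan majorant that Harper starts from, the range $\log T<p\le\sqrt x$ has already been removed pointwise using RH. For $\zeta_K$ the coefficient in that range is $\Lambda_K(p^2)/\log p=\chi_H(\Frob_p^2)$. This is not the prime coefficient of $\zeta_K$ or of $\zeta_L$, so GRH for $\zeta_L$ does not by itself give $\Re\sum_{\log T<p\le\sqrt x}\chi_H(\Frob_p^2)p^{-1-2it}w_x(p)=O(1)$. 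The paper names exactly this as the principal new difficulty.

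The paper's fix runs as follows:
\begin{itemize}
\item It writes $\chi_H(\sigma^2)=2\chi_{\mathcal P_2(X)}(\sigma)+2\chi_H(\sigma)-\chi_{X\times X}(\sigma)$.
\item By rational Artin induction this becomes $\sum_\nu q_\nu\,\mathrm{Ind}_{C_\nu}^G\mathbf 1(\sigma)$, so that $\Lambda_K(p^2)/\log p=\sum_\nu q_\nu\Lambda_{L^{C_\nu}}(p)/\log p$ for unramified $p$.
\item Aramata--Brauer transfers GRH from $\zeta_L$ to each $\zeta_{L^{C_\nu}}$.
\item A twisted explicit formula gives $\Psi_F(z,\tau)\ll z/T+z^{1/2}\log^2T$.
\item Partial summation from $(\log T)^{10}$ then yields the $O(1)$ bound (Lemmas~\ref{artin-square-coefficients}, \ref{lem:twisted-chebyshev}, \ref{dedekind-square-tail}).
\end{itemize}
Only after this do the surviving squares $p\le\log T$ fall under Harper's device.

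Saying the squares are ``bounded after averaging'' is not enough, because they sit inside an exponential. An averaging route would need further exceptional sets for $\log T<p\le T^{\varepsilon}$, with moments adapted to the length restriction. It would also need Cauchy--Schwarz and a crude moment bound $T(\log T)^{O(1)}$. You never mention such a bound, and Harper's argument needs one on $\mathcal S(0)$ as well (Lemma~\ref{shifted-moment-trivial-bound} in the paper).

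\textbf{Your ``main obstacle'' is not real.} Because $\beta_I$ is a fixed constant, $\sum_{p>T^{\beta_I}}p^{-1-1/\log T}\le\log(1/\beta_I)+O(1)$, so the tail is absolutely bounded. More generally, for bounded $c_p$ one has $\sum_{p\le x}c_pp^{-1-iu}=\sum_pc_pp^{-1-1/\log x-iu}+O(1)$ uniformly in real $u$, by Mertens alone; this is Lemma~\ref{euler-product-comparison}. The paper also uses $|\mathfrak h(p)|^2\ge0$ to enlarge the diagonal from $p\le T^{\alpha_{\mathcal J}}$ to $p\le T$ before comparing. So Step 3 needs neither GRH, nor bounds for $\log\zeta_F$ near $\Re s=1$, nor Curran's kernel.

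Your remark that positivity is unavailable is also mistaken: $\chi_i\chi_j$ is itself a permutation character, and the full diagonal term is a squared modulus. Finally, Aramata--Brauer shows that $\zeta_L/\zeta_{K_j}$ is entire, and that is what transfers GRH. The statement that $\zeta_{K_j}/\zeta$ is entire is Dedekind's conjecture, which is not known for general non-Galois $K_j$.
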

The factor $B(T,\mathbf b,\mathbf a)$ records the singular correlations of the degree-one Frobenius coefficients. In the Galois specialisation $K_1=\cdots=K_r=K$, the permutation character is the regular character of $\Gal(K/\mathbb Q)$ and $\calZ_{ij}(s)=\zeta_K(s)^{[K:\mathbb Q]}$. For a non-Galois field the same covariance is expressed by the fixed fields of the double-coset stabilisers.

Our first corollary handles the unshifted case of Theorem \ref{main-theorem}.
		\begin{cor}\label{mixed-unshifted-corollary}
Let $K_1,\ldots,K_r$ be fixed number fields, and let $L$ be the compositum of their Galois closures over $\mathbb{Q}$. Let $G=\Gal(L/\mathbb{Q})$, $H_i=\Gal(L/K_i)$, and
\begin{equation*}
	\chi_i(\sigma)=|\Fix(\sigma:G/H_i\to G/H_i)|,\quad\sigma\in G.
\end{equation*}
Let $a_1,\ldots,a_r>0$ be fixed, and assume GRH for $\zeta_L(s)$. There is $T_0=T_0(K_1,\ldots,K_r,\mathbf a)\ge3$ such that the following holds for $T\ge T_0$:
			\begin{equation*}
				\int_T^{2T}\prod_{i=1}^{r}\left|\zeta_{K_i}\left(\tfrac{1}{2}+it\right)\right|^{a_i}\;dt
				\ll_{K_1,\ldots,K_r,\mathbf a}
				T(\log T)^{\frac{1}{4}\left\langle\sum_{i=1}^{r}a_i\chi_i,\sum_{i=1}^{r}a_i\chi_i\right\rangle_G},
			\end{equation*}
where
\begin{equation*}
	\langle\varphi,\psi\rangle_G
	=\frac{1}{|G|}\sum_{\sigma\in G}\varphi(\sigma)\overline{\psi(\sigma)}.
\end{equation*}
Equivalently, the exponent of $\log T$ is
\begin{equation*}
	\frac{1}{4}\sum_{i,j=1}^{r}a_ia_j|H_i\backslash G/H_j|.
\end{equation*}
		\end{cor}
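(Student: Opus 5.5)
We apply Theorem \ref{main-theorem} with all shifts equal to zero, $b_1=\cdots=b_r=0$, and with the compact set $\mathcal A=\{\mathbf a\}$. This gives
\begin{equation*}
\int_T^{2T}\prod_{i=1}^{r}\left|\zeta_{K_i}\left(\tfrac12+it\right)\right|^{a_i}\,dt\ll T\prod_{i,j=1}^{r}\left|\calZ_{ij}\left(1+\tfrac{1}{\log T}\right)\right|^{a_ia_j/4}.
\end{equation*}
It therefore remains to evaluate $\calZ_{ij}$ at the real point $s=1+1/\log T$.

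Each factor of $\calZ_{ij}$ is a Dedekind zeta function $\zeta_F(s)$ of a fixed number field $F=L^{H_i\cap gH_jg^{-1}}$. Such a function has a simple pole at $s=1$ with positive residue $\kappa_F$. Hence $\zeta_F(1+\delta)=\kappa_F\delta^{-1}(1+O_F(\delta))$ for $0<\delta\le 1$, and so $\zeta_F(1+1/\log T)\asymp_F \log T$. The product defining $\calZ_{ij}$ is indexed by the double cosets $H_i\backslash G/H_j$, and therefore
\begin{equation*}
\left|\calZ_{ij}\left(1+\tfrac1{\log T}\right)\right|\asymp_{K_1,\ldots,K_r}(\log T)^{|H_i\backslash G/H_j|}.
\end{equation*}
Raising this to the power $a_ia_j/4$ and multiplying over $i,j$ gives the bound $T(\log T)^{\frac14\sum_{i,j}a_ia_j|H_i\backslash G/H_j|}$, with an implied constant depending only on the fields and $\mathbf a$.

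For the equivalence of the two exponents, expand the inner product bilinearly. The $\chi_i$ are real-valued, so
\begin{equation*}
\left\langle\sum_i a_i\chi_i,\sum_j a_j\chi_j\right\rangle_G=\sum_{i,j}a_ia_j\langle\chi_i,\chi_j\rangle_G,
\end{equation*}
and it suffices to show that $\langle\chi_i,\chi_j\rangle_G=|H_i\backslash G/H_j|$. The product $\chi_i\chi_j$ is the permutation character of $G$ acting on $G/H_i\times G/H_j$. By Burnside's lemma, $\frac1{|G|}\sum_\sigma\chi_i(\sigma)\chi_j(\sigma)$ counts the $G$-orbits on $G/H_i\times G/H_j$. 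These orbits are in bijection with $H_i\backslash G/H_j$: every orbit contains a point $(H_i,gH_j)$, and two such points $(H_i,gH_j)$ and $(H_i,g'H_j)$ lie in the same orbit exactly when $g'\in H_igH_j$.

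No step here is a real obstacle; all the analytic difficulty is contained in Theorem \ref{main-theorem}. The only points to check are that the zero-shift specialisation is allowed (it is, since $|b_i|=0\le T/2$ trivially) and that the residue asymptotics of each $\zeta_F$ are uniform, which holds because only finitely many fixed fields occur.
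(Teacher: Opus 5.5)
Your proposal is correct and follows the paper's proof essentially step for step: specialise Theorem \ref{main-theorem} to zero shifts, use the simple pole of each Dedekind zeta factor to get $\calZ_{ij}(1+1/\log T)\asymp(\log T)^{|H_i\backslash G/H_j|}$, and identify $|H_i\backslash G/H_j|=\langle\chi_i,\chi_j\rangle_G$ via Burnside's lemma. Your explicit check of the orbit--double-coset bijection and of the choice $\mathcal A=\{\mathbf a\}$ fills in details the paper only asserts.
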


\begin{proof}
Apply Theorem \ref{main-theorem} with $b_1=\cdots=b_r=0$. For every $1\le i,j\le r$, we have
\begin{equation*}
	\calZ_{ij}(s)=
	\prod_{H_igH_j\in H_i\backslash G/H_j}
	\zeta_{L^{H_i\cap gH_jg^{-1}}}(s).
\end{equation*}
Every Dedekind zeta function in this product has a simple pole at $s=1$ with positive residue. Therefore,
\begin{equation*}
	\calZ_{ij}\left(1+\frac{1}{\log T}\right)
	\asymp_{K_1,\ldots,K_r}
	(\log T)^{|H_i\backslash G/H_j|}.
\end{equation*}
Consequently,
\begin{equation*}
	B(T,\mathbf 0,\mathbf a)
	\asymp_{K_1,\ldots,K_r,\mathbf a}
	(\log T)^{\frac{1}{4}\sum_{i,j=1}^{r}a_ia_j|H_i\backslash G/H_j|}.
\end{equation*}
The diagonal action of $G$ on $G/H_i\times G/H_j$ has character $\chi_i\chi_j$, and its orbits are indexed by the double cosets $H_i\backslash G/H_j$. Hence Burnside's lemma gives
\begin{equation*}
	|H_i\backslash G/H_j|
	=\frac{1}{|G|}\sum_{\sigma\in G}\chi_i(\sigma)\chi_j(\sigma)
	=\langle\chi_i,\chi_j\rangle_G.
\end{equation*}
It follows that
\begin{equation*}
	\sum_{i,j=1}^{r}a_ia_j|H_i\backslash G/H_j|
	=\left\langle\sum_{i=1}^{r}a_i\chi_i,\sum_{i=1}^{r}a_i\chi_i\right\rangle_G.
\end{equation*}
Substituting this identity into the preceding estimate for $B(T,\mathbf 0,\mathbf a)$ and applying Theorem \ref{main-theorem} proves the corollary.
			\end{proof}

            The following corollary gives the upper bound for shifted moments in terms of the corresponding correlation functions of the Dedekind zeta function associated with a single number field.
            
\begin{cor}\label{galois-shifted-corollary}
Let $K$ be a fixed number field, let $L$ be its Galois closure over $\mathbb Q$, let $G=\Gal(L/\mathbb Q)$ and $H=\Gal(L/K)$, and set $\rho_K=|H\backslash G/H|$. Assume GRH for $\zeta_L(s)$, and let $a_1,\ldots,a_r>0$ be fixed. There is $T_0=T_0(K,r,\mathbf a)>10$ such that, for $T\ge T_0$, uniformly for real shifts satisfying $|b_i|\le T/2$,
\begin{equation*}
\int_T^{2T}\prod_{i=1}^{r}
\left|\zeta_K\left(\frac12+i(t+b_i)\right)\right|^{a_i}\,dt
\ll_{K,r,\mathbf a} T(\log T)^{\rho_K(a_1^2+\cdots+a_{r}^2)/4}
\prod_{1\le  i<j\le  r}
g(|b_i-b_j|)^{\rho_K a_i a_j/2},
\end{equation*}
where $g:[0,T]\to(0,\infty)$ is given by
\begin{equation*}
			g(x)= \begin{cases}
				\log T, \ & \text{if} \ 0\le x\le \frac{1}{\log T},\\
				x^{-1}, \ & \text{if} \ \frac{1}{\log T}<x\le 10,\\
				\log\log x, \ & \text{if}\ 10<x\le T.
			\end{cases}
\end{equation*}
\end{cor}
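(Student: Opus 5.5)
We apply Theorem \ref{main-theorem} with $K_1=\cdots=K_r=K$, so that $L$ is the Galois closure of $K$, $H_i=H$ for every $i$, and each $\calZ_{ij}(s)$ is the same function
\begin{equation*}
\calZ(s)=\prod_{HgH\in H\backslash G/H}\zeta_{L^{H\cap gHg^{-1}}}(s),
\end{equation*}
a product of exactly $\rho_K$ Dedekind zeta functions of fixed fields. The compact set is $\mathcal A=\{\mathbf a\}$. The diagonal terms $i=j$ of $B(T,\mathbf b,\mathbf a)$ have $b_i-b_j=0$. As in the proof of Corollary \ref{mixed-unshifted-corollary}, each factor then has a simple pole at $1$, so $|\calZ(1+1/\log T)|\asymp_K(\log T)^{\rho_K}$. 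These terms therefore contribute $(\log T)^{\rho_K a_i^2/4}$, and together they give $(\log T)^{\rho_K(a_1^2+\cdots+a_r^2)/4}$. The off-diagonal pairs $(i,j)$ and $(j,i)$ give complex-conjugate arguments, so $|\calZ|$ takes the same value at both, and they combine into the exponent $a_ia_j/2$. It therefore suffices to show, for each fixed number field $F$ (namely $F=L^{H\cap gHg^{-1}}$) and $x=|b_i-b_j|\in[0,T]$, that
\begin{equation*}
\left|\zeta_F\left(1+\tfrac1{\log T}+ix\right)\right|\ll_F g(x).
\end{equation*}
Taking the product over the $\rho_K$ double cosets then gives $|\calZ|\ll g(x)^{\rho_K}$, which is what the corollary asserts.

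We split into the three ranges in the definition of $g$. Write $\sigma=1+1/\log T$. Near the pole we use the Laurent expansion $\zeta_F(s)=\kappa_F/(s-1)+O_F(1)$, valid for $|s-1|\le 11$ with $\Re s\ge1$; this holds because $\zeta_F$ is holomorphic on that compact region apart from the simple pole. Then $|\zeta_F(\sigma+ix)|\ll_F 1/|\sigma-1+ix|+1\ll\min\{\log T,x^{-1}\}+1$. For $x\le 1/\log T$ this is $\ll\log T$. For $1/\log T<x\le10$ it is $\ll x^{-1}$, since $x^{-1}\ge 1/10$ absorbs the constant.

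The large range $10<x\le T$ is the main point. Here we need $|\zeta_F(\sigma+ix)|\ll_F\log\log x$, which is the classical GRH bound for $L$-functions on the $1$-line. We assume GRH for $\zeta_L$; since $\zeta_L/\zeta_F$ is entire (Aramata--Brauer), GRH for $\zeta_F$ follows. From here we follow the standard argument of Littlewood type. Under GRH one has
\begin{equation*}
\log\zeta_F(\sigma+ix)=\sum_{N\mathfrak p\le (\log x)^2}N\mathfrak p^{-\sigma-ix}+O_F(1),
\end{equation*}
uniformly for $\sigma\ge1$ and $x\ge10$. This can be obtained, for instance, from the explicit formula, or from a GRH bound for $\log\zeta_F$ near the $1$-line, using that $\zeta_F$ has analytic conductor $\asymp_F x^{[F:\mathbb Q]}$. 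Bounding the sum trivially by the prime ideal theorem, $\sum_{N\mathfrak p\le y}N\mathfrak p^{-1}=\log\log y+O_F(1)$, gives $|\zeta_F|\ll_F\log\log x$. The delicate point is the uniformity at $\sigma$ very close to $1$. Since $\sigma\ge1$ and the truncation estimate is uniform there, no issue arises. The cost is only a multiplicative constant $e^{O_F(1)}$, which is absorbed into the implied constant depending on $K$.

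Combining the three ranges, taking the product over the double cosets and over the pairs $i<j$, and inserting the result into Theorem \ref{main-theorem} yields the stated bound with $T_0(K,r,\mathbf a)$ taken from the theorem. The threshold $T_0>10$ ensures that the ranges in $g$ are nonempty and consistent.
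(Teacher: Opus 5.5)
Your proposal is correct and follows the paper's proof essentially step for step. You specialise Theorem~\ref{main-theorem} to $K_i=K$, split $B(T,\mathbf b,\mathbf a)$ into diagonal factors and conjugate-paired off-diagonal factors, and bound each $\zeta_{L^{H\cap gHg^{-1}}}(1+1/\log T+iu)$ by $g(|u|)$, using the pole and Laurent expansion for $|u|\le 10$ and a GRH $\log\log$ bound (via Aramata--Brauer) beyond that. The only difference is how that last bound is obtained: the paper cites the $1$-line estimate of Garaev--K\"uhleitner--Luca--Nowak and transfers it to $\Re s=1+1/\log T$ by Phragm\'en--Lindel\"of, whereas your Littlewood-type truncated Euler product for $\log\zeta_F$, valid uniformly for $\sigma\ge1$, gives the bound at $\sigma=1+1/\log T$ directly.
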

\begin{proof}
Apply Theorem \ref{main-theorem} with $K_i=K$ for every $i$. The common Galois closure is $L$, and $H_i=H$ for every $i$. Hence, for every ordered pair $(i,j)$, the double-coset product in Theorem \ref{main-theorem} is
\begin{equation*}
        \calZ_{ij}(s)=
        \prod_{HgH\in H\backslash G/H}
        \zeta_{L^{H\cap gHg^{-1}}}(s).
\end{equation*} Substitution in the definition of $B(T,\mathbf b,\mathbf a)$ gives
\begin{align*}
B(T,\mathbf b,\mathbf a)
&=
\prod_{i,j=1}^{r}
\prod_{HgH\in H\backslash G/H}
\left|
\zeta_{L^{H\cap gHg^{-1}}}
\left(
1+\frac1{\log T}+i(b_i-b_j)
\right)
\right|^{a_i a_j/4}.
\end{align*}
To obtain the required form with $g$, we separate the ordered pairs.  The diagonal contribution is
\begin{align*}
&\prod_{i=1}^{r}
\prod_{HgH\in H\backslash G/H}
\left|
\zeta_{L^{H\cap gHg^{-1}}}
\left(1+\frac1{\log T}\right)
\right|^{a_i^2/4}\\
&\qquad\asymp_{L,r,\mathbf a}
(\log T)^{\rho_K
(a_1^2+\cdots+a_r^2)/4}.
\end{align*}
For each $i<j$, the ordered pairs $(i,j)$ and $(j,i)$ combine to give
\begin{equation*}
\prod_{HgH\in H\backslash G/H}
\left|
\zeta_{L^{H\cap gHg^{-1}}}
\left(
1+\frac1{\log T}+i(b_i-b_j)
\right)
\right|^{a_i a_j/2}.
\end{equation*}
Since GRH holds for $\zeta_L(s)$, the Aramata--Brauer theorem \cite{Aramata, Brauer} implies that GRH also holds for $\zeta_{L^{H\cap gHg^{-1}}}(s)$. The remark following Lemma~B of Garaev, K\"uhleitner, Luca, and Nowak \cite[p.~304]{Garaev-Kuehleitner-Luca-Nowak} gives, under GRH, the fixed-field estimate $\zeta_F(1+iu)\ll_F\log\log |u|$ for sufficiently large $|u|$. Applying the Phragm\'en--Lindel\"of theorem in $1\le\Re s\le2$ to
\begin{equation*}
\frac{s-1}{s+1}\frac{\zeta_F(s)}{\log(\log(A+s))},
\end{equation*}
where $A>e^e+3$ is fixed and extends this estimate uniformly to $1\le\Re s\le2$. Consequently, uniformly for $|u|\le T$,
\begin{equation*}
        \left|\zeta_{L^{H\cap gHg^{-1}}}\left(1+\frac1{\log T}+iu\right)\right|\ll_L g(|u|).
\end{equation*}
For $|u|\le1/\log T$ the estimate follows from the simple pole of $\zeta_F(s)$ at $s=1$ and if $1/\log T<|u|\le10$ the Laurent expansion of $\zeta_F(s)$ gives
\begin{equation*}
    \zeta_F\!\left(1+\frac1{\log T}+iu\right)
\ll
\left|\frac1{1/\log T+iu}\right|+1
\ll
|u|^{-1}.
\end{equation*}

For $10<|u|\le T$ it follows from the preceding strip estimate with $\Re s=1+1/\log T$. These estimates are uniform over the finite set of intermediate fields in the product.
Applying this with $u=b_i-b_j$ gives the required result.
\end{proof}

     The next corollary establishes a sharp upper bound for the $2k$th moment of the Dedekind zeta function associated with an arbitrary number field.
            \begin{cor}\label{nongalois-unshifted-corollary}
Let $K$ be an arbitrary fixed number field, let $L$ be its Galois closure, let $G=\Gal(L/\mathbb Q)$ and $H=\Gal(L/K)$, and let $\chi(\sigma)=|\Fix(\sigma:G/H\to G/H)|$. If GRH holds for $\zeta_L(s)$ and $k>0$ is fixed, then there is $T_0=T_0(K,k)\ge3$ such that, for $T\ge T_0$,
\begin{equation*}
        \int_T^{2T}\left|\zeta_K\left(\frac12+it\right)\right|^{2k}\;dt
        \ll_{K,k}
        T(\log T)^{k^2\langle\chi,\chi\rangle_G},
\end{equation*}
where $\langle\chi,\chi\rangle_G=|G|^{-1}\sum_{\sigma\in G}\chi(\sigma)^2=|H\backslash G/H|$.
\end{cor}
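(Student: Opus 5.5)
This is the special case $r=1$, $K_1=K$, $a_1=2k$ of Corollary~\ref{mixed-unshifted-corollary}. The plan is to read off the exponent there and then identify it with the two expressions in the statement.

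With $r=1$ the compositum of Galois closures is the Galois closure $L$ of $K$, so $G=\Gal(L/\mathbb Q)$, $H_1=H$ and $\chi_1=\chi$. The GRH hypothesis for $\zeta_L(s)$ is exactly the one required in Corollary~\ref{mixed-unshifted-corollary}. Since $k>0$ is fixed, $\mathbf a=(2k)$ is a fixed positive exponent, and $|\zeta_K(\tfrac12+it)|^{a_1}=|\zeta_K(\tfrac12+it)|^{2k}$. Corollary~\ref{mixed-unshifted-corollary} then supplies $T_0=T_0(K,k)$ and the bound
\begin{equation*}
\int_T^{2T}\left|\zeta_K\left(\tfrac12+it\right)\right|^{2k}dt\ll_{K,k}T(\log T)^{\frac14\langle 2k\chi,2k\chi\rangle_G}.
\end{equation*}
Since $\chi$ is real valued, the inner product is bilinear over $\mathbb R$, so $\frac14\langle 2k\chi,2k\chi\rangle_G=k^2\langle\chi,\chi\rangle_G$. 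This is the claimed exponent.

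For the stated identity $\langle\chi,\chi\rangle_G=|G|^{-1}\sum_{\sigma}\chi(\sigma)^2=|H\backslash G/H|$, the first equality is the definition together with the fact that $\chi$ is real. The second is the Burnside argument already given in the proof of Corollary~\ref{mixed-unshifted-corollary}, taken with $i=j$. The diagonal action of $G$ on $G/H\times G/H$ has character $\chi^2$. Its orbits correspond to the double cosets $HgH$ via $(xH,yH)\mapsto Hx^{-1}yH$. Burnside's lemma then counts these orbits as the average of $\chi^2$ over $G$.

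No step is a real obstacle; the only care needed is to check that the dependence of $T_0$ and of the implied constant on $(K_1,\mathbf a)$ becomes dependence on $(K,k)$, which is immediate.
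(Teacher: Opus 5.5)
Your proposal is correct and is the paper's own proof: it specialises Corollary~\ref{mixed-unshifted-corollary} to $r=1$, $K_1=K$, $a_1=2k$, and the identity $\langle\chi,\chi\rangle_G=|H\backslash G/H|$ comes from the Burnside argument in that corollary's proof. Your additional checks are also correct, namely the scaling $\frac14\langle 2k\chi,2k\chi\rangle_G=k^2\langle\chi,\chi\rangle_G$ and the orbit--double-coset bijection $(xH,yH)\mapsto Hx^{-1}yH$.
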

\begin{proof}
Take $r=1$ and $a_1=2k$ in Corollary \ref{mixed-unshifted-corollary}.
\end{proof}
This is conjecturally sharp. Indeed, write
$\chi=\sum_{\pi\in\widehat G}m_\pi\chi_\pi$; then Artin formalism
gives $\zeta_K(s)=\prod_\pi L(s,\pi)^{m_\pi}$, and the standard
moment conjecture for non-primitive $L$-functions
\cite[Conjecture~4]{Heap1} predicts the logarithmic exponent
\begin{equation*}
k^2\sum_\pi m_\pi^2
=
k^2\langle\chi,\chi\rangle_G
=
k^2|H\backslash G/H|,
\end{equation*}
which is exactly the exponent obtained here.

The double cosets are the $H$-orbits on $G/H$. Hence
$\rho_K:=|H\backslash G/H|\le[G:H]=[K:\mathbb Q]$, with equality precisely when every $H$-orbit is a singleton. This is equivalent to $g^{-1}Hg=H$ for every $g\in G$, and therefore to normality of $H$. Since $L$ is the Galois closure of $K$, equality holds exactly when $K/\mathbb Q$ is Galois.

As an illustration, let $K$ be a non-Galois cubic extension of $\mathbb{Q}$ with discriminant $D$, where $D$ is not a perfect square. Then its Galois closure is $L=K(\sqrt{D})$, and the Galois group $G=\Gal(L/\mathbb{Q})$ is isomorphic to $S_3$. Moreover,
$H=\Gal(L/K)\cong C_2$. The permutation character on the three left cosets has values $3$, $1$, and $0$ on the identity, the three transpositions, and the two $3$-cycles. Burnside's lemma therefore gives
\begin{equation*}
|H\backslash G/H|=\frac{3^2+3\cdot1^2+2\cdot0^2}{6}=2.
\end{equation*}
Hence, by the preceding corollary, for every fixed $k>0$,
\begin{equation*}
\int_T^{2T}
\left|
\zeta_K\left(\frac12+it\right)
\right|^{2k}\,dt
\ll_{K,k}
T(\log T)^{2k^2}.
\end{equation*}
Here $[K:\mathbb{Q}]=3$, whereas the logarithmic exponent is determined by $|H\backslash G/H|=2$. In general the preceding orbit argument gives strict inequality for every non-Galois field.

The following corollary improves the conditional upper bound of Milinovich and Turnage-Butterbaugh \cite{M-T}. Moreover, it extends Hagen's \cite{Hagen} result, which was established for finite solvable Galois extensions, to any finite Galois extension $K/\mathbb{Q}$. This is an immediate consequence of Corollary~\ref{nongalois-unshifted-corollary} upon taking $L=K$ and $H=\{1\}$, since $K/\mathbb{Q}$ is Galois.

		\begin{cor}\label{sharp-bound-dedekind-zeta}
Let $K/\mathbb Q$ be a finite Galois extension and let $k>0$. Assume GRH for $\zeta_K(s)$. There is $T_0=T_0(K,k)\ge3$ such that, for $T\ge T_0$,
			\begin{equation*}
				\int_{T}^{2T}\left|\zeta_K\left(\tfrac{1}{2}+it\right) \right|^{2k}\;dt\ll_{K,k} T(\log T)^{[K:\mathbb{Q}]k^2}.
			\end{equation*}
		\end{cor}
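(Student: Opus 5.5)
This is a specialisation of Corollary~\ref{nongalois-unshifted-corollary}, and the only work is to evaluate the double-coset count $|H\backslash G/H|$ when $K/\mathbb Q$ is Galois.

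Since $K/\mathbb Q$ is Galois, its Galois closure is $K$ itself, so $L=K$, $G=\Gal(K/\mathbb Q)$ and $H=\Gal(L/K)=\{1\}$. The hypothesis of Corollary~\ref{nongalois-unshifted-corollary} is GRH for $\zeta_L(s)=\zeta_K(s)$, which is exactly what we assume. That corollary then gives, for $T\ge T_0(K,k)$,
\begin{equation*}
\int_T^{2T}\left|\zeta_K\left(\tfrac12+it\right)\right|^{2k}\,dt\ll_{K,k}T(\log T)^{k^2|H\backslash G/H|}.
\end{equation*}

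With $H=\{1\}$, the double cosets $H\backslash G/H$ are just the singletons $\{g\}$, so $|H\backslash G/H|=|G|=[K:\mathbb Q]$. The same value comes out of the character formula. The permutation character $\chi$ of $G$ acting on $G/\{1\}=G$ is the regular character, with $\chi(1)=|G|$ and $\chi(\sigma)=0$ for $\sigma\ne1$. Hence
\begin{equation*}
\langle\chi,\chi\rangle_G=\frac{|G|^2}{|G|}=|G|=[K:\mathbb Q].
\end{equation*}
Substituting this into the exponent gives $T(\log T)^{[K:\mathbb Q]k^2}$, which is the claimed bound.

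There is no real obstacle here. The one point to check is that the implied constant and $T_0$ depend only on $K$ and $k$, and this holds because Corollary~\ref{nongalois-unshifted-corollary} already provides exactly that dependence.
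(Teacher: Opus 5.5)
Your proposal is correct and follows the paper's route exactly: the paper also derives this as an immediate consequence of Corollary~\ref{nongalois-unshifted-corollary} with $L=K$ and $H=\{1\}$, so that $|H\backslash G/H|=|G|=[K:\mathbb Q]$ and GRH for $\zeta_L$ is just GRH for $\zeta_K$. Your regular-character computation of $\langle\chi,\chi\rangle_G$ is a correct cross-check of the same count.
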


The bounds in Corollaries \ref{nongalois-unshifted-corollary} and \ref{sharp-bound-dedekind-zeta} are uniform for $k$ in a fixed compact interval $[k_1,k_2]\subset(0,\infty)$. Indeed, apply the compact-uniform form of Theorem \ref{main-theorem} with the exponent $2k$ ranging over $[2k_1,2k_2]$.

\subsection{Conditional upper bounds for large deviations of $\zeta_K(1/2+it)$}
The study of large deviations of $\log |\zeta(1/2+it)|$ is closely connected with moments of the Riemann zeta function through the identity
\begin{equation*}
\int_T^{2T}|\zeta(\tfrac{1}{2}+it)|^{2k}\;dt=\int_T^{2T}e^{2k\log|\zeta(\tfrac{1}{2}+it)|}\;dt.
\end{equation*}
Under the Riemann Hypothesis, a series of works has shown that the large deviation regime $V\asymp \log\log T$ has Gaussian-type decay and determines the size of high moments. Harper's bound \eqref{harper-upper-bound} directly gives that for $\log\log T \leq V\leq 1000\log\log T$, 
	\begin{equation*}
		\text{meas}\{ t\in[T,2T]: \log|\zeta(1/2+it)|\geq V\}\ll Te^{-V^2/\log\log T}.
	\end{equation*}
	However, the Gaussian heuristic suggests that the true order should be smaller by a factor $(\log\log T)^{-1/2}$. Under RH, Arguin, Bailey, and Roberts \cite{Arguin-Bailey-Roberts} proved that, for each fixed $k>0$ and all sufficiently large $T$,
\begin{equation*}
\frac1T\operatorname{meas}\left\{t\in[T,2T]:
\left|\zeta\left(\frac12+it\right)\right|>(\log T)^k\right\}
\le \frac{\exp(e^{ck})}{\sqrt{\log\log T}}(\log T)^{-k^2},
\end{equation*}
where $c>0$ is absolute. Their upper bound has the order predicted by the Gaussian heuristic. Arguin and Bailey \cite{Arguin-Bailey} proved the corresponding upper bound unconditionally for fixed $0<k<2$. These results motivate the study of analogous large deviation bounds for the Dedekind zeta function.
	
	    As an application, we study the distribution of large values of the Dedekind zeta function on the critical line. The following corollary gives an upper bound on the measure of level sets of $\zeta_K(1/2+it)$ in the interval $[T,2T]$ for values of the order $\log\log T$, conditionally on GRH.

\begin{cor}\label{large-deviation-nongalois}
Let $K$ be a fixed number field, let $L$ be its Galois closure over $\mathbb Q$, let $G=\Gal(L/\mathbb Q)$ and $H=\Gal(L/K)$, and set
\begin{equation*}
        \rho_K=|H\backslash G/H|.
\end{equation*}
Assume GRH for $\zeta_L(s)$. Fix constants $0<c_1<c_2$. There is $T_0=T_0(K,c_1,c_2)\ge3$ such that, for $T\ge T_0$, uniformly for
\begin{equation*}
        c_1\sqrt{\log\log T}\le  V\le  c_2\sqrt{\log\log T},
\end{equation*}
one has
\begin{equation*}
        \frac1T\operatorname{meas}\left\{
        t\in[T,2T]:
        \log\left|\zeta_K\left(\frac12+it\right)\right|
        \ge
        V\sqrt{\frac{\rho_K}{2}\log\log T}
        \right\}
        \ll_{K,c_1,c_2}
        e^{-V^2/2}.
\end{equation*}
If $K/\mathbb Q$ is Galois, then $\rho_K=[K:\mathbb Q]$.
\end{cor}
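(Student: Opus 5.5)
The plan is to use a Chebyshev/Markov inequality with a moment exponent chosen to fit the threshold, together with Corollary \ref{nongalois-unshifted-corollary} in its compact-uniform form (see the remark after Corollary \ref{sharp-bound-dedekind-zeta}).

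Write $W=V\sqrt{\tfrac{\rho_K}{2}\log\log T}$ and let $k>0$ be a parameter. Then
\begin{equation*}
\operatorname{meas}\{t\in[T,2T]:\log|\zeta_K(\tfrac12+it)|\ge W\}
\le e^{-2kW}\int_T^{2T}|\zeta_K(\tfrac12+it)|^{2k}\,dt
\ll_{K,k} T\exp\bigl(-2kW+k^2\rho_K\log\log T\bigr).
\end{equation*}
Minimising the exponent over $k$ gives $k=W/(\rho_K\log\log T)$. With this choice the exponent equals $-W^2/(\rho_K\log\log T)=-V^2/2$, which is the claimed bound.

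The point that needs care is uniformity. The optimal $k$ is $k=V/\sqrt{2\rho_K\log\log T}$. In the stated range of $V$, this gives $k\in[c_1/\sqrt{2\rho_K},\,c_2/\sqrt{2\rho_K}]$. So $k$ lies in a fixed compact interval $[k_1,k_2]\subset(0,\infty)$ that depends only on $c_1,c_2,K$. By the compact-uniform form of Corollary \ref{nongalois-unshifted-corollary}, the implied constant and the threshold $T_0$ are uniform for $k$ in this interval. This is where the hypothesis $c_1>0$ enters, since it keeps $k$ bounded away from $0$; the implied constant then depends only on $K,c_1,c_2$. I expect no real obstacle beyond checking this uniformity.

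For the final claim, Galois $K$ has $L=K$ and $H=\{1\}$. Then $|H\backslash G/H|=|G|=[K:\mathbb Q]$, as observed after Corollary \ref{nongalois-unshifted-corollary}.
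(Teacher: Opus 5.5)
Your proposal is correct and is essentially the paper's own proof. Like the paper, you apply Chebyshev's inequality with $k=V/\sqrt{2\rho_K\log\log T}$, which lies in a fixed compact interval because $c_1\le V/\sqrt{\log\log T}\le c_2$, and then invoke the compact-uniform form of Corollary~\ref{nongalois-unshifted-corollary} to get $T\exp(-V^2+\rho_Kk^2\log\log T)=Te^{-V^2/2}$.
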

\begin{proof}
Let
\begin{equation*}
        k=\frac{V}{\sqrt{2\rho_K\log\log T}}.
\end{equation*}
The condition $V\asymp\sqrt{\log\log T}$ keeps $k$ in a fixed compact subinterval of $(0,\infty)$.  Therefore, Corollary \ref{nongalois-unshifted-corollary} and Chebyshev's inequality give
\begin{align*}
&\operatorname{meas}\left\{
        t\in[T,2T]:
        \log\left|\zeta_K\left(\frac12+it\right)\right|
        \ge
        V\sqrt{\frac{\rho_K}{2}\log\log T}
        \right\} \\
&\qquad\le
        \exp\left(-2k V\sqrt{\frac{\rho_K}{2}\log\log T}\right)
        \int_T^{2T}
        \left|\zeta_K\left(\frac12+it\right)\right|^{2k}\,dt  \\
&\qquad\ll
        T\exp\left(-V^2+\rho_Kk^2\log\log T\right)
        =
        T e^{-V^2/2}.
\end{align*}
This proves the corollary. 
\end{proof}

\subsection{Coefficients of Dedekind zeta functions in short intervals}
Let $K$ be a number field and let $\mathcal O_K$ be its ring of integers. For a positive integer $n$, let $r_K(n)$ denote the number of integral ideals of norm $n$. Then, for $\Re s>1$,
\begin{equation}\label{Dedekind-zeta-dirichlet-series}
        \zeta_K(s)=\sum_{n=1}^{\infty}\frac{r_K(n)}{n^s}.
\end{equation}
The residue at $s=1$ is
\begin{equation}\label{eq:class-number-formula}
        \operatorname*{Res}_{s=1}\zeta_K(s)
        =
        \frac{2^{r_1}(2\pi)^{r_2}h_KR_K}
        {w_K\sqrt{|D_K|}},
\end{equation}
where $r_1$ and $r_2$ denote the numbers of real embeddings and complex embeddings of $K$ up to conjugation, respectively, $h_K$ is the class number of $K$, $R_K$ is the regulator of $K$, $w_K$ is the number of roots of unity in $K$, and $D_K$ is the discriminant of $K$. Landau's classical estimate \cite{Landau} gives
\begin{equation*}
        \sum_{n\le  x}r_K(n)
        =
        \frac{2^{r_1}(2\pi)^{r_2}h_KR_K}
        {w_K\sqrt{|D_K|}}x
        +O_K\left(x^{1-2/([K:\mathbb Q]+1)}\right).
\end{equation*}
Adapting the Perron--Plancherel argument used in the proof of Milinovich and Turnage-Butterbaugh \cite[Theorem~1.4]{M-T}, we obtain a proportional-interval estimate. The proof is included because the cited theorem assumes a relative interval length tending to zero.

\begin{cor}\label{short-interval-nongalois}
Let $K$ be a fixed number field, let $L$ be its Galois closure over $\mathbb Q$, let $G=\Gal(L/\mathbb Q)$ and $H=\Gal(L/K)$, and set $\rho_K=|H\backslash G/H|$. Assume GRH for $\zeta_L(s)$. There is $X_0=X_0(K)\ge3$ such that, for $X\ge X_0$, uniformly for $2\le Q\le X$,
\begin{equation*}
        \frac1X\int_X^{2X}
        \left|
        \sum_{x<n\le x+x/Q}r_K(n)
        -
        \frac{2^{r_1}(2\pi)^{r_2}h_KR_K}
        {w_K\sqrt{|D_K|}}\frac{x}{Q}
        \right|^2\,dx
        \ll_K
        \frac{X}{Q}\left(\log(2Q)\right)^{\rho_K}.
\end{equation*}
Moreover, for every $\psi(X)\ge1$, the estimate
\begin{equation*}
        \sum_{x<n\le x+x/Q}r_K(n)
        =
        \frac{2^{r_1}(2\pi)^{r_2}h_KR_K}
        {w_K\sqrt{|D_K|}}\frac{x}{Q}
        +
        O_K\left(
        \sqrt{\frac{X}{Q}}\left(\log(2Q)\right)^{\rho_K/2}\psi(X)
        \right)
\end{equation*}
holds for all $x\in[X,2X]$ outside a set of measure $O_K(X/\psi(X)^2)$.
\end{cor}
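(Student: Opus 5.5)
We follow the Perron--Plancherel argument of Milinovich and Turnage-Butterbaugh, taking the shifted second moment from Theorem~\ref{main-theorem} (through Corollary~\ref{galois-shifted-corollary} with $r=2$, $a_1=a_2=1$) as the analytic input. Set $\delta=1/Q$ and $c_K$ for the residue \eqref{eq:class-number-formula}. By Perron's formula, moving the contour to $\Re s=1/2$ and picking up the pole at $s=1$,
\begin{equation*}
\sum_{x<n\le x+\delta x}r_K(n)-c_K\delta x
=\frac{1}{2\pi}\int_{-\infty}^{\infty}\zeta_K\!\left(\tfrac12+it\right)\frac{(1+\delta)^{1/2+it}-1}{\tfrac12+it}\,x^{1/2+it}\,dt,
\end{equation*}
up to the usual truncation terms. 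These are controlled by a smoothing argument: we compare the sharp indicator with smooth weights supported on $[x,x+\delta x]$ and differing from it on intervals of length $\delta x/Q^{A}$, or we truncate at height $T_1=X^{2}$ and use the convexity bound. We then substitute $x=e^{u}$ and apply Plancherel in $u$. Writing $w(t)=((1+\delta)^{1/2+it}-1)/(1/2+it)$, we have $|w(t)|\ll\min\{\delta,|t|^{-1}\}$, so
\begin{equation*}
\frac1X\int_X^{2X}|\cdots|^2\,dx\ll X\int_{\mathbb R}\left(\int_{t}^{t+1}\left|\zeta_K\left(\tfrac12+iv\right)\right|dv\right)^{2}\min\{\delta^2,t^{-2}\}\,dt,
\end{equation*}
obtained via the standard Gallagher-type local $L^2$ in the $u$ variable over a window of length $\log 2$. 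After Cauchy--Schwarz, the inner square is bounded by $\int_{t}^{t+1}|\zeta_K(1/2+iv)|^2\,dv$.

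We split $t$ dyadically. For $|t|\le Q$ the weight is $\delta^2$. For $|t|\sim T\ge Q$ it is $T^{-2}$. The key input is the second moment $\int_T^{2T}|\zeta_K(1/2+it)|^2\,dt\ll T(\log T)^{\rho_K}$, which is Corollary~\ref{nongalois-unshifted-corollary} with $k=1$ and is valid for $T\ge T_0(K)$. For bounded $t$ we use trivial bounds. The range $|t|\le Q$ then gives
\begin{equation*}
X\delta^2\sum_{T\le Q\ \text{dyadic}}T(\log T)^{\rho_K}\ll X\delta(\log 2Q)^{\rho_K},
\end{equation*}
and the range $|t|>Q$ gives
\begin{equation*}
X\sum_{T>Q}T^{-1}(\log T)^{\rho_K}.
\end{equation*}
This last sum diverges logarithmically, so the crude weight $\min\{\delta^2,t^{-2}\}$ cannot be used as it stands. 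The remedy is to work with the mean square over $x$ directly. Squaring the Perron integral and integrating over $x\in[X,2X]$ produces a kernel
\begin{equation*}
\int_X^{2X}x^{1+i(t-t')}\,dx\ll X^{2}\min\{1,|t-t'|^{-1}\},
\end{equation*}
and this off-diagonal decay is handled by the Montgomery--Vaughan or Gallagher lemma. The result is an effective weight $|w(t)|^2$ rather than its square root. Since $|w(t)|^2\ll\min\{\delta^2,t^{-2}\}$, the tail becomes
\begin{equation*}
X\sum_{T>Q}T^{-2}\cdot T(\log T)^{\rho_K}\ll X Q^{-1}(\log Q)^{\rho_K},
\end{equation*}
which is now convergent. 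Together with the range $|t|\le Q$, this gives the claimed bound $X Q^{-1}(\log 2Q)^{\rho_K}$.

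The main obstacle is this step: justifying the $L^2$ reduction with the sharp cutoff, uniformly for $Q$ up to $X$, including the truncation error at height $\asymp X$ and keeping the $T_0$ threshold harmless. We would handle it by smoothing. Take a smooth $\phi$ approximating $1_{[1,1+\delta]}$ with transition width $\delta/Q$. The error coming from the transition regions contributes $O(\delta x/Q\cdot\max r_K)$ in mean square, which is acceptable using the elementary bound $\sum_{y<n\le y+h}r_K(n)^2\ll h\log^{[K:\mathbb Q]^2}$ for $h\ge y^{1/2}$, and Landau's estimate when $Q$ is close to $X$. For the smoothed sum the Mellin transform decays rapidly beyond $|t|\gg Q^{2}$, so the dyadic ranges involve only $T\le Q^{2+\varepsilon}$, where $(\log T)^{\rho_K}\asymp(\log 2Q)^{\rho_K}$.

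The second assertion then follows from the first by Chebyshev's inequality. The set where the error exceeds $\sqrt{X/Q}(\log 2Q)^{\rho_K/2}\psi(X)$ has measure at most
\begin{equation*}
X\cdot\frac{XQ^{-1}(\log 2Q)^{\rho_K}}{X Q^{-1}(\log 2Q)^{\rho_K}\psi(X)^2}=\frac{X}{\psi(X)^2}.
\end{equation*}
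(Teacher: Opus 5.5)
Your skeleton (Perron's formula, a shift to $\Re s=\tfrac12$, Plancherel in $\tau=\log x$, the $k=1$ case of Corollary~\ref{nongalois-unshifted-corollary} on dyadic ranges, then Chebyshev) is the paper's, and your final numbers are right. However, two steps as written are wrong, and one of them is the step you call the main obstacle.

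\textbf{The tail sum does not diverge.} Your claim that $X\sum_{T>Q}T^{-1}(\log T)^{\rho_K}$ diverges logarithmically is a miscalculation. With dyadic $T=2^jQ$ this sum is
\begin{equation*}
\ll XQ^{-1}\sum_{j\ge0}2^{-j}\left(\log 2^{j+1}Q\right)^{\rho_K}\ll XQ^{-1}(\log 2Q)^{\rho_K}.
\end{equation*}
The weight $\min\{\delta^2,t^{-2}\}$ you had already used is $|w(t)|^2$, not its square root. Your ``remedy'' therefore produces the identical expression $T^{-2}\cdot T(\log T)^{\rho_K}$, so the Montgomery--Vaughan/Gallagher kernel detour is unnecessary and contradicts what precedes it. No Gallagher-type local bound is needed either. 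Write $\Delta(x)$ for the discrepancy and put $x=e^\tau$. Plancherel then gives the exact identity
\begin{equation*}
\int_0^\infty|\Delta(x)|^2\,\frac{dx}{x^2}=\frac1{2\pi}\int_{-\infty}^{\infty}\left|\zeta_K\left(\tfrac12+it\right)w(t)\right|^2dt.
\end{equation*}
Restricting to $[X,2X]$ and using $x^{-2}\ge(2X)^{-2}$ gives the mean-square bound at once.

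\textbf{Your justifications of the contour shift fail.} Smoothing with relative transition width $\delta/Q$ breaks down for small $Q$. The sharp and smoothed sums differ on transition intervals of length $\asymp X/Q^2$. Your crude bound there, $O(X^{1+\varepsilon}/Q^2)$, has square exceeding the target $X/Q$ as soon as $Q\le X^{1/3}$; for $Q=2$ it is of order $X^2$ against $X$. The alternative, truncating at height $T_1$ and using convexity, does not control the horizontal segments near $\Re s=\tfrac12$ once $[K:\mathbb Q]\ge4$. There $|\zeta_K|\,|w|\ll T_1^{[K:\mathbb Q]/4-1+\varepsilon}$ no longer decays, so those segments contribute at least about $x^{1/2}$, whose square already exceeds $X/Q$ for bounded $Q$.

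\textbf{The missing ingredient is the Lindel\"of bound.} You need $\zeta_K(\sigma\pm iR)\ll_\varepsilon R^{\varepsilon}$ for $\tfrac12\le\sigma\le2$. It follows from GRH for $\zeta_K$, which Aramata--Brauer deduces from GRH for $\zeta_L$. With it the paper needs no truncation or smoothing at all:
\begin{itemize}
\item It takes the full Perron integral on $\Re s=2$.
\item It shifts on $[-R,R]$, where the horizontal segments contribute $O(x^2R^{\varepsilon-1})\to0$, and picks up the residue at $s=1$.
\item It passes to the $L^2$ limit on the critical line. This is legitimate because $\zeta_K(\tfrac12+it)w(t)\in L^2$, by the second moment and $|w(t)|\ll\min\{Q^{-1},(1+|t|)^{-1}\}$.
\end{itemize}
This is uniform for $2\le Q\le X$ and disposes of what you identified as the main obstacle.
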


\begin{proof}
Applying Corollary~\ref{nongalois-unshifted-corollary} with $k=1$, together with conjugation and a dyadic decomposition, we obtain
\begin{equation*}
\int_{-U}^{U}\left|\zeta_K\left(\frac12+it\right)\right|^2\,dt
\ll_K U\left(\log(2U)\right)^{\rho_K}.
\end{equation*}
If $s=1/2+it$, then
\begin{equation*}
\left|\frac{(1+1/Q)^s-1}{s}\right|
\ll\min\left\{\frac1Q,\frac1{1+|t|}\right\}.
\end{equation*}
The first estimate follows from
\begin{equation*}
    \frac{(1+1/Q)^s-1}{s}
=\int_0^{\log(1+1/Q)}e^{us}\,du \ll Q^{-1},
\end{equation*}
as $\Re s=1/2$. For the second, note that
$|(1+1/Q)^s-1|\ll1$,
so
\begin{equation*}
    \left|\frac{(1+1/Q)^s-1}{s}\right|
\ll |s|^{-1}
\asymp (1+|t|)^{-1}.
\end{equation*}

 Combining this with the second-moment estimate above and summing over dyadic ranges, we obtain
\begin{align}
\nonumber &\int_{-\infty}^{\infty}
\left|\zeta_K\left(\frac12+it\right)\right|^2
\left|\frac{(1+1/Q)^{1/2+it}-1}{1/2+it}\right|^2\,dt\\
\nonumber&\quad\ll_K Q^{-2}\int_{|t|\le Q}
\left|\zeta_K\left(\frac12+it\right)\right|^2\,dt\\
\nonumber&\qquad+\sum_{j\ge0}(2^jQ)^{-2}
\int_{2^jQ<|t|\le2^{j+1}Q}
\left|\zeta_K\left(\frac12+it\right)\right|^2\,dt\\
&\quad\ll_K\frac1Q\left(\log(2Q)\right)^{\rho_K}. \label{eq: estimate-second-moment}
\end{align}
We next justify the passage from the Dirichlet series to the short-interval sum via Perron's formula and Plancherel's theorem. By Perron's formula, 
\begin{equation*}
\frac1{2\pi i}\int_{2-i\infty}^{2+i\infty}
\zeta_K(s)\frac{(1+1/Q)^s-1}{s}x^s\,ds =\sum_{x<n\le x+x/Q}r_K(n),
\end{equation*}
for every $x>0$ except $x$ and $x+x/Q$ are not integers.
By the Aramata--Brauer theorem, GRH for $\zeta_L$ implies GRH for $\zeta_K$. Consequently, the generalized Lindelöf hypothesis holds for $\zeta_K$, so that for every $\varepsilon>0$ and $1/2\le\sigma\le2$,
\begin{equation*}
\zeta_K(\sigma\mathbin{\pm}iR)\ll_{K,\varepsilon}R^\varepsilon.
\end{equation*}
We now shift the contour from the line segment $[2-iR,\,2+iR]$ to $[1/2-iR,\,1/2+iR]$. The only pole crossed is at $s=1$, with residue
\begin{equation*}
\frac{2^{r_1}(2\pi)^{r_2}h_KR_K}
{w_K\sqrt{|D_K|}}\frac{x}{Q}.
\end{equation*}
On the horizontal segments,
$((1+1/Q)^s-1)/s\ll R^{-1} $ ,
and hence their total contribution is
$O(x^2R^{\varepsilon-1})$,
which tends to zero as $R\to\infty$. Since the critical-line integral is square-integrable, we may pass to the limit in $L^2$. After the change of variables $x=e^\tau$, Plancherel's theorem yields
\begin{align*}
&\int_0^\infty
\left|\sum_{x<n\le x+x/Q}r_K(n)
-\frac{2^{r_1}(2\pi)^{r_2}h_KR_K}
{w_K\sqrt{|D_K|}}\frac{x}{Q}\right|^2\frac{dx}{x^2}\\
&\qquad=\frac1{2\pi}\int_{-\infty}^{\infty}
\left|\zeta_K\left(\frac12+it\right)\right|^2
\left|\frac{(1+1/Q)^{1/2+it}-1}{1/2+it}\right|^2\,dt.
\end{align*}
Restricting the left-hand side to $X\le x\le2X$, using $x^{-2}\ge (2X)^{-2}$, and applying the estimate \eqref{eq: estimate-second-moment}, we obtain the desired mean-square bound. Finally, Chebyshev's inequality, applied with threshold
\begin{equation*}
\sqrt{\frac{X}{Q}}\left(\log(2Q)\right)^{\rho_K/2}\psi(X),
\end{equation*}
yields the exceptional-set estimate.
\end{proof}
\subsection*{Overview of the proof}
Our proof extends Harper's method for moments of the Riemann zeta function to products of shifted Dedekind zeta functions attached to arbitrary number fields. The principal new difficulty occurs in the non-Galois setting, where the covariance of the prime coefficients is naturally expressed in terms of permutation characters of the Galois closure and the corresponding Artin L-functions. This issue arises in the analysis of the prime-square contribution in Chandee's explicit formula. To overcome it, we use Artin induction together with the Aramata--Brauer theorem to rewrite the relevant Euler products in terms of Dedekind zeta functions of intermediate fields, thereby avoiding any appeal to the Artin conjectures.

The first step is to establish a Chandee-type upper bound for
\begin{equation*}
    \sum_{j=1}^{r}a_j
\log\left|
\zeta_{K_j}\!\left(\frac12+i(t+b_j)\right)
\right|.
\end{equation*}
Starting from Chandee's explicit formula, we majorise each logarithm by a short Dirichlet polynomial over primes and prime squares. The contribution of higher prime powers is negligible, while the prime-square terms are treated using the above reduction to Dedekind zeta functions. This yields Proposition~\ref{main-prop}, which furnishes a uniform upper bound for products of shifted Dedekind zeta functions.

With Proposition~\ref{main-prop} in hand, we adapt Harper's argument. The primes are partitioned into short intervals, and the interval $[T,2T]$ is decomposed into a good set and exceptional sets according to the size of the corresponding Dirichlet polynomials. On the good set, the exponential is replaced by a truncated exponential series, and the resulting mean value is estimated using orthogonality of Dirichlet polynomials. Proposition~\ref{frobenius-orbit-product} identifies the resulting diagonal contribution with the Euler product
$B(T,\mathbf b,\mathbf a)$,
while the off-diagonal terms are negligible because the Dirichlet polynomials are sufficiently short.

Finally, the exceptional sets are handled using Harper's iterative argument with progressively shorter Dirichlet polynomials, yielding an exponential saving. Combining the estimates for the good and exceptional sets establishes Theorem~\ref{main-theorem}. 
\section*{Acknowledgements}
		
		The authors are grateful to Markus V.~Hagen, Winston Heap, and Micah B.~Milinovich for valuable discussions related to this work. BD is supported by the Heilbronn Institute for Mathematical Research. NK is supported by the Prime Minister's Research Fellowship (PMRF), Government of India (PMRF ID:~2403449). KM is supported by the ARG-MATRICS program (Grant No.~ANRF/ARGM/2025/002540/MTR).
		
	\section{Preliminary tools}
The following auxiliary results will be used repeatedly in the proofs of the main theorems.
	\begin{lem}\label{truncate-dirichlet-poly}
			Let $a(n)$ be a sequence of complex numbers and define $D(s)= \sum_{p\le  X}a(p)p^{-s}$ with $s=\sigma +it $. Assume that $V$  is larger than a sufficiently large absolute constant. Suppose $t\in [T,2T]$ is such that $|D(s)|\le  V$, and let $N\ge 10V$ be an integer.  Then
			\begin{equation*}
				\exp\left(2\Re D(s)\right) = \left(1+O\left(e^{-9V}\right)\right)\left| \sum_{j\le  N} \frac{(D(s))^j}{j!}\right|^2.
			\end{equation*}
		\end{lem}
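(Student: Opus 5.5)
The plan is to compare $\exp(2\Re D)=|\exp(D)|^2$ with the square of the truncated exponential series. Write $z=D(s)$, so that $|z|\le V$, and write $E(z)=\sum_{j\le N}z^j/j!$. Then $e^{z}=E(z)+R(z)$ with remainder $R(z)=\sum_{j>N}z^j/j!$. Since $\exp(2\Re z)=|e^z|^2$, it suffices to show that $|R(z)|\le e^{-10V}|e^z|$, say. Once that is known, $E(z)=e^z(1+O(e^{-10V}))$, and taking squared moduli gives $|E(z)|^2=|e^z|^2(1+O(e^{-10V}))$. Inverting, $\exp(2\Re z)=(1+O(e^{-10V}))|E(z)|^2$, which is stronger than the claim.

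To bound the tail, we use $j!\ge (j/e)^j$ together with $N\ge 10V$. For $j>N$ this gives
\begin{equation*}
\frac{|z|^j}{j!}\le\left(\frac{eV}{j}\right)^j\le\left(\frac{e}{10}\right)^j .
\end{equation*}
Summing the geometric series, we get $|R(z)|\ll (e/10)^{N}\le (e/10)^{10V}=e^{-10V\log(10/e)}$. Note that $10\log(10/e)\approx 13.02$.

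The lower bound for $|e^z|$ comes from $|e^z|=e^{\Re z}\ge e^{-V}$. Hence
\begin{equation*}
\frac{|R(z)|}{|e^z|}\ll e^{V-13.02V}\le e^{-12V},
\end{equation*}
and the implied constant is absorbed once $V$ exceeds a large absolute constant. This gives the relative bound we need.

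The only delicate point is this lower bound. A negative real part can make $e^z$ as small as $e^{-V}$, so the tail must beat $e^{-V}$ by an extra factor $e^{-9V}$. The margin provided by $N\ge 10V$ is just sufficient for this, and that is the reason for the constant $10$.
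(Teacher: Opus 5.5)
Your proof is correct and follows essentially the same route as the paper's. You bound the tail $\sum_{j>N}z^j/j!$ by $(e/10)^{N}$ (up to a constant), and the paper does the same via the first omitted term and the ratio $V/(N+2)\le 1/10$. Both then divide by $|e^z|\ge e^{-V}$ to get a relative error $e^{-(10\log(10/e)-1)V}\ll e^{-9V}$. One small correction to your last paragraph: the margin from $N\ge 10V$ is not ``just sufficient''. Your own computation gives a relative error of about $e^{-12V}$, and $N\ge 9V$ would already yield $e^{-9V}$.
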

	\begin{proof}
		Let $z=D(s)$ and
        \begin{equation*}
                P_N(z)=\sum_{0\le j\le N}\frac{z^j}{j!}.
        \end{equation*}
        Since $N\ge10V$, successive terms in the exponential tail have ratio at most $V/(N+2)\le1/10$.  Hence
        \begin{align*}
        \left|e^z-P_N(z)\right|
        &\le \sum_{j>N}\frac{V^j}{j!}
        \le \frac{10}{9}\frac{V^{N+1}}{(N+1)!}.
        \end{align*}
        Using $(N+1)!\ge((N+1)/e)^{N+1}$ and $-\Re z\le V$, we obtain
        \begin{equation*}
        \left|e^{-z}P_N(z)-1\right|
        \le
        \frac{10}{9}e^V
        \left(\frac{eV}{N+1}\right)^{N+1}.
        \end{equation*}
Since $N+1\ge 10V$, we obtain 
$\left|e^{-z}P_N(z)-1\right|
\ll
\exp\left(-\left(10\log(10/e)-1\right)V\right)
\ll e^{-9V}.$
Therefore, 
\begin{equation*}
    P_N(z)=e^z\left( 1+ O\left(e^{-9V}\right)\right).
\end{equation*}
Taking absolute values squared yields
        \begin{equation*}
                e^{2\Re z}
=\left(1+O(e^{-9V})\right)|P_N(z)|^2.
\end{equation*}
        All implied constants are absolute.
	\end{proof}

\begin{lem}\label{complex-dirichlet-mean}
Let $\mathcal P_1,\ldots,\mathcal P_R$ be disjoint finite sets of primes and
let $\mathcal Q_1,\ldots,\mathcal Q_S$ be finite sets whose elements are
pairwise coprime, both within and between the sets, and coprime to every prime
in the $\mathcal P_r$.  Let
\begin{equation*}
D_r(t)=\sum_{p\in\mathcal P_r}c_r(p)p^{-it},\qquad
E_s(t)=\sum_{q\in\mathcal Q_s}d_s(q)q^{-it}.
\end{equation*}
Let $N_r,M_s$ be non-negative integers.  Suppose every integer occurring in
\begin{equation*}
 A(t):=\prod_{s=1}^S E_s(t)^{M_s}
 \prod_{r=1}^R\sum_{0\le m\le N_r}\frac{D_r(t)^m}{m!}
\end{equation*}
is at most $T^{1/10}$.  Then    
        \begin{align*}
         &\int_T^{2T}
        \left|\prod_{r=1}^{R}\sum_{0\le  m\le  N_r}\frac{D_r(t)^m}{m!}\right|^2
        \left|\prod_{s=1}^S E_s(t)^{M_s} \right|^2\,dt \\
&\ll T\exp\left(\sum_{r=1}^R\sum_{p\in\mathcal P_r}|c_r(p)|^2\right)
 \prod_{s=1}^S M_s!\left(\sum_{q\in\mathcal Q_s}|d_s(q)|^2\right)^{M_s}\\
&\quad+T^{1/10}
 \prod_{s=1}^S\left(\sum_{q\in\mathcal Q_s}|d_s(q)|\right)^{2M_s}
 \left(\prod_{r=1}^R\sum_{0\le m\le N_r}\frac1{m!}
 \left(\sum_{p\in\mathcal P_r}|c_r(p)|\right)^m\right)^2.
\end{align*}
Here empty products are interpreted as $1$.
\end{lem}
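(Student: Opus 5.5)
Expanding $A(t)$ as a Dirichlet polynomial $A(t)=\sum_{n\le T^{1/10}}\alpha(n)n^{-it}$ and using the standard mean value estimate for Dirichlet polynomials, we get
\begin{equation*}
\int_T^{2T}|A(t)|^2\,dt=T\sum_n|\alpha(n)|^2+O\Big(\sum_{m\ne n}\frac{|\alpha(m)\alpha(n)|}{|\log(m/n)|}\Big).
\end{equation*}
Since every $m,n\le T^{1/10}$, we have $|\log(m/n)|\gg T^{-1/10}$. Hence the off-diagonal part is $\ll T^{1/10}\big(\sum_n|\alpha(n)|\big)^2$. Replacing every coefficient by its absolute value in the expansion of $A$ bounds $\sum_n|\alpha(n)|$ by
\begin{equation*}
\prod_s\Big(\sum_q|d_s(q)|\Big)^{M_s}\prod_r\sum_{m\le N_r}\frac1{m!}\Big(\sum_p|c_r(p)|\Big)^m,
\end{equation*}
which gives the second term.

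For the diagonal term, we need a multiplicative decoupling. Each integer $n$ in the support factors uniquely as $n=\prod_r n_r\prod_s n_s'$. Here $n_r$ is composed of primes in $\mathcal P_r$, and $n_s'$ is a product of $M_s$ elements of $\mathcal Q_s$. This factorisation is unique because the $\mathcal P_r$ are disjoint, and the $\mathcal Q_s$ elements are pairwise coprime to each other and to all the $p\in\mathcal P_r$. Since all the elements of $\mathcal Q_s$ are pairwise coprime, the multiset of $q$'s is recovered from $n_s'$. Therefore $\alpha(n)=\prod_r\beta_r(n_r)\prod_s\gamma_s(n_s')$, where $\beta_r$ and $\gamma_s$ are the coefficients of $\sum_{m\le N_r}D_r^m/m!$ and of $E_s^{M_s}$, respectively. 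It follows that $\sum_n|\alpha(n)|^2=\prod_r\sum|\beta_r|^2\prod_s\sum|\gamma_s|^2$.

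For $E_s^{M_s}$, write $n'=q_1^{e_1}\cdots q_k^{e_k}$ with $\sum e_i=M_s$. Then
\begin{equation*}
\gamma_s(n')=\frac{M_s!}{\prod e_i!}\prod_i d_s(q_i)^{e_i}.
\end{equation*}
Using $\big(\frac{M!}{\prod e_i!}\big)^2\le M!\frac{M!}{\prod e_i!}$ and the multinomial theorem, we get
\begin{equation*}
\sum_{n'}|\gamma_s(n')|^2\le M_s!\Big(\sum_q|d_s(q)|^2\Big)^{M_s}.
\end{equation*}

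For the truncated exponential, take $n=p_1^{e_1}\cdots p_k^{e_k}$ with $p_i\in\mathcal P_r$ and $m=\sum e_i\le N_r$. The coefficient of $n^{-it}$ in $D_r^m/m!$ is $\prod_i c_r(p_i)^{e_i}/e_i!$, exactly as in the full exponential series $\exp(D_r)=\prod_p\exp(c_r(p)p^{-it})$. Truncation only removes terms. Hence
\begin{equation*}
\sum_n|\beta_r(n)|^2\le\prod_{p\in\mathcal P_r}\sum_{e\ge0}\frac{|c_r(p)|^{2e}}{(e!)^2}\le\prod_p\exp(|c_r(p)|^2),
\end{equation*}
and taking the product over $r$ gives $\exp\big(\sum_r\sum_p|c_r(p)|^2\big)$.

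The main point needing care is the unique factorisation that decouples the blocks, which relies exactly on the coprimality hypotheses. Everything else is routine.
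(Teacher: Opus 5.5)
Your proposal is correct and follows the same route as the paper. You split the mean value into diagonal and off-diagonal parts using $|\log(m/n)|\gg T^{-1/10}$, decouple the blocks multiplicatively by unique factorisation, bound the $E_s^{M_s}$ factors via $(M_s!/\prod e_i!)^2\le M_s!\cdot M_s!/\prod e_i!$ and the multinomial theorem, and compare the truncated exponential coefficients with those of $\prod_p\exp(c_r(p)p^{-it})$. You actually give more detail than the paper on why the factorisation map is injective and why truncation only deletes coefficients.
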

\begin{proof}
Write $A(t)=\sum_na(n)n^{-it}$. 
The diagonal contribution is $T\sum_n|a(n)|^2$.  If $m,n\le T^{1/10}$ and
$m\ne n$, then $|\log(n/m)|\gg T^{-1/10}$, so the off-diagonal contributions are
bounded by $T^{1/10}(\sum_n|a(n)|)^2$.

By unique factorisation, the contributions from the sets $\mathcal P_r$ and $\mathcal Q_s$ separate multiplicatively. For each auxiliary set, the multinomial theorem and
$\prod_q\nu(q)!\ge1$ give
\begin{equation*}
 \sum_{\sum_q\nu(q)=M_s}
 \left(\frac{M_s!}{\prod_q\nu(q)!}\right)^2
 \prod_q|d_s(q)|^{2\nu(q)}
 \le M_s!\left(\sum_q|d_s(q)|^2\right)^{M_s}.
\end{equation*}

The truncated exponential factors contribute at most
$\exp(\sum_{r,p}|c_r(p)|^2)$ to $\sum_n|a(n)|^2$, while
\begin{equation*}
 \sum_n|a(n)|\le
 \prod_s\left(\sum_q|d_s(q)|\right)^{M_s}
 \prod_r\sum_{m\le N_r}\frac1{m!}
 \left(\sum_p|c_r(p)|\right)^m.
\end{equation*}
 This proves the lemma.
\end{proof}

	\subsection{Conditional approximations of Dedekind zeta functions} Now we will obtain a finite Dirichlet series approximation for $\log|\zeta_K(1/2+it)|$. Similar approximations were used by Soundararajan \cite{Sound} and Harper \cite{Harper} to obtain moment bounds. The Generalised Riemann Hypothesis is used at this step, and we assume it throughout this subsection. In the non-Galois setting, we use Artin induction together with the Aramata--Brauer theorem to extend these arguments to Dedekind zeta functions.
    
	Taking logarithmic derivatives in \eqref{dedekind-zeta},
	for $\Re(s)>1$, we have
	\begin{equation*}
		\frac{\zeta_K'}{\zeta_K}(s)= -\sum_{n=1}^{\infty} \frac{\Lambda_K(n)}{n^s}.
	\end{equation*}
		From the Euler product one has the elementary coefficient bound $|\Lambda_K(n)|\le  [K:\mathbb{Q}]\Lambda(n)$.
\begin{lem}\label{conditional-bound-chandee}
Let $K$ be a fixed number field, let $d=[K:\mathbb Q]$, and assume GRH for $\zeta_K(s)$.  Define
\begin{equation*}
        \mathcal C_K(t)=|D_K|(|t|+3)^d.
\end{equation*}
There is $T_0=T_0(K)\ge3$ such that, whenever $T\ge T_0$, $T/2\le t\le5T/2$, and $e^2\le x\le T^2$, one has
\begin{align*}
\log\left|\zeta_K\left(\frac12+it\right)\right|
&\le
\Re\sum_{n\le x}
\frac{\Lambda_K(n)}
{n^{1/2+1/\log x+it}\log n}
\frac{\log(x/n)}{\log x} \\
&\quad+
\frac{\log\mathcal C_K(t)}{\log x}
+O_K\left(\frac1{\log x}\right),
\end{align*}
\end{lem}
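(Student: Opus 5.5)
This is the analogue of Chandee's Theorem~2.1 (the refinement of Soundararajan's Proposition), and I would follow the same route with the completed Dedekind zeta function in place of $\xi$. Put
\begin{equation*}
\Lambda(s)=|D_K|^{s/2}\Gamma_{\mathbb R}(s)^{r_1}\Gamma_{\mathbb C}(s)^{r_2}s(s-1)\zeta_K(s),
\end{equation*}
which is entire of order one. Under GRH for $\zeta_K$ all of its zeros are $\rho=\tfrac12+i\gamma$, and the Hadamard product gives
\begin{equation*}
\Re\frac{\Lambda'}{\Lambda}(\sigma+it)=\sum_\rho\frac{\sigma-1/2}{(\sigma-1/2)^2+(t-\gamma)^2}\ge0 \quad\text{for } \sigma>\tfrac12.
\end{equation*}
So $\log|\Lambda(\sigma+it)|$ is increasing in $\sigma$. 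Integrating from $\tfrac12$ to $\sigma_0=\tfrac12+1/\log x$ and using $\log(1+v)\le v$ (as in Soundararajan and Chandee) should give
\begin{equation*}
\log|\zeta_K(\tfrac12+it)|\le\log|\zeta_K(\sigma_0+it)|+(\sigma_0-\tfrac12)\Big(\tfrac12\log\mathcal C_K(t)-F(\sigma_0)\Big)+O\!\Big(\frac1{\log x}\Big),
\end{equation*}
where $F(\sigma_0)=\sum_\rho\frac{\sigma_0-1/2}{(\sigma_0-1/2)^2+(t-\gamma)^2}$. The gamma and discriminant factors contribute $\tfrac12\log\mathcal C_K(t)+O_K(1)$ here by Stirling, and since $t\asymp T$ is large, the factor $s(s-1)$ is harmless.

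Next I would prove Chandee's weighted explicit formula for $\zeta_K$. Fix $\sigma\ge\sigma_0$ and compute
\begin{equation*}
\frac1{2\pi i}\int_{(c)}-\frac{\zeta_K'}{\zeta_K}(\sigma+it+w)\frac{x^w}{w^2}\,dw
\end{equation*}
by shifting the contour left. The integral equals $\sum_{n\le x}\Lambda_K(n)n^{-\sigma-it}\log(x/n)$. The residues come from three sources. The pole at $w=0$ gives $-\frac{\zeta_K'}{\zeta_K}(\sigma+it)\log x-(\zeta_K'/\zeta_K)'(\sigma+it)$. The zeros give $\sum_\rho x^{\rho-s}/(\rho-s)^2$. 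The pole at $s=1$ and the trivial zeros give terms of size $O_K(x^{1-\sigma}/T^2)$ and $O_K(x^{-\sigma}/T^2)$, which are negligible because $|t|\asymp T$ and $x\le T^2$.

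Integrating this identity in $\sigma$ from $\sigma_0$ to $\infty$ and taking real parts turns the Dirichlet polynomial into
\begin{equation*}
\Re\sum_{n\le x}\frac{\Lambda_K(n)}{n^{\sigma_0+it}\log n}\frac{\log(x/n)}{\log x},
\end{equation*}
with a main term $\log|\zeta_K(\sigma_0+it)|$ and a zero term
\begin{equation*}
\frac1{\log x}\Re\sum_\rho\int_{\sigma_0}^\infty\frac{x^{\rho-s}}{(\rho-s)^2}\,d\sigma .
\end{equation*}
The remaining $(\zeta_K'/\zeta_K)'$ term integrates to $\frac{1}{\log x}\frac{\zeta_K'}{\zeta_K}(\sigma_0+it)$, whose real part is also expressed through $F(\sigma_0)$ and $\log\mathcal C_K(t)$ via Hadamard again.

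The main obstacle is the sign bookkeeping. The zero contribution has to be bounded by $\frac{1}{\log x}\sum_\rho\frac{\sigma_0-1/2}{(\sigma_0-1/2)^2+(t-\gamma)^2}\,\cdot\,\frac{x^{1/2-\sigma_0}}{\cdots}$, that is, by a constant multiple of $F(\sigma_0)/\log x$. For this I would use $|x^{\rho-s}|\le x^{1/2-\sigma}$ and integrate. The point is that, with the choice $\sigma_0-\tfrac12=1/\log x$, the coefficient $e^{-1}$-type constant is dominated by the negative term $-(\sigma_0-\tfrac12)F(\sigma_0)$ from the first step, so all $F$ terms cancel or are nonpositive. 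I expect that checking these constants, exactly as in Chandee's Lemma~2.1 argument, will be the delicate point, since the claimed bound carries the precise coefficient $1$ in front of $\log\mathcal C_K(t)/\log x$.

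After this, the constant-order errors, namely Stirling remainders, $\log|D_K|$ corrections and the $s(s-1)$ factor, all collect into $O_K(1/\log x)$. The range $e^2\le x$ guarantees $\sigma_0\le 1$, so no convergence issues arise, and taking $T_0(K)$ large absorbs the contributions from the pole and trivial zeros.
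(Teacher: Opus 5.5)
Your route is the paper's route. The paper simply invokes Chandee's Theorem~2.1 with $\lambda=1$: it computes the analytic conductor of $\zeta_K$ and checks that the extra error terms and the smoothed residues at $s=1$ and $s=0$ are $O_K(1/\log x)$. You are re-deriving that theorem for $\zeta_K$.

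\emph{Your first step is wrong as stated.} Integrating $-\Re\frac{\zeta_K'}{\zeta_K}(\sigma+it)$ over $[\tfrac12,\sigma_0]$ gives $\log|\zeta_K(\tfrac12+it)|-\log|\zeta_K(\sigma_0+it)|=(\sigma_0-\tfrac12)(\tfrac12\log\mathcal C_K(t)+O_K(1))-\tfrac12\sum_\rho\log\bigl(1+(\sigma_0-\tfrac12)^2/(t-\gamma)^2\bigr)$. The sum enters with a minus sign, so you need a \emph{lower} bound for $\log(1+v)$, and $\log(1+v)\le v$ points the wrong way. The correct inequality $\log(1+v)\ge v/(1+v)$ yields only $-\tfrac12(\sigma_0-\tfrac12)F(\sigma_0)$. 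The full coefficient $-(\sigma_0-\tfrac12)F(\sigma_0)$ you wrote is not available in general. A zero with $|t-\gamma|$ large compared with $\sigma_0-\tfrac12$ contributes to $\int_{1/2}^{\sigma_0}F(\sigma+it)\,d\sigma$ only about half of what it contributes to $(\sigma_0-\tfrac12)F(\sigma_0)$.

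There is also a sign slip later. The $(\zeta_K'/\zeta_K)'$ part of the residue at $w=0$ contributes $-\frac{1}{\log x}\frac{\zeta_K'}{\zeta_K}(\sigma_0+it)$ to $\log\zeta_K(\sigma_0+it)$, not $+\frac{1}{\log x}\frac{\zeta_K'}{\zeta_K}(\sigma_0+it)$. Only with the minus sign is its real part $\frac{1}{\log x}\bigl(\tfrac12\log\mathcal C_K(t)-F(\sigma_0)+O_K(1)\bigr)$, which supplies both the second half of the conductor term and a negative $F$-term.

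The repair is easy, and with $\lambda=1$ you need no $F$-saving from the first step at all.
\begin{itemize}
\item Monotonicity of $\log|\Lambda(\sigma+it)|$ alone gives $\log|\zeta_K(\tfrac12+it)|\le\log|\zeta_K(\sigma_0+it)|+\frac{\log\mathcal C_K(t)}{2\log x}+O_K(1/\log x)$.
\item Your explicit-formula step gives $\log|\zeta_K(\sigma_0+it)|\le\Re\sum_{n\le x}(\cdots)+\frac{1}{\log x}\bigl(\tfrac12\log\mathcal C_K(t)-F(\sigma_0)\bigr)+\frac{e^{-1}}{\log x}F(\sigma_0)+O_K(1/\log x)$. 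The $e^{-1}$ term comes from $|x^{\rho-s}|\le x^{1/2-\sigma}$, $|\rho-s|\ge|\rho-\sigma_0-it|$, and $x^{-(\sigma_0-1/2)}=e^{-1}$.
\item Adding the two, the coefficient of $\log\mathcal C_K(t)/\log x$ is exactly $\tfrac12+\tfrac12=1$.
\item The coefficient of $F(\sigma_0)/\log x$ is $e^{-1}-1<0$. If you keep the corrected half-saving it is $e^{-1}-\tfrac32$, which is Chandee's condition $e^{-\lambda}\le\lambda+\lambda^2/2$ at $\lambda=1$. Either way the $F$-terms can be dropped.
\end{itemize}
With these corrections, and the pole and trivial-zero terms handled as you describe (negligible since $x\le T^2$ and $t\asymp T$), your argument proves the lemma.
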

\begin{proof}
Let $(r_1,r_2)$ be the signature of $K$, so that $d=r_1+2r_2$.  The logarithm of Chandee's analytic conductor for the shifted Dedekind zeta function is
\begin{equation*}
\log\left(\frac{|D_K|}{\pi^d}
\left|\frac14+\frac{it}{2}\right|^{r_1+r_2}
\left|\frac34+\frac{it}{2}\right|^{r_2}\right)
=\log\mathcal C_K(t)+O_K(1).
\end{equation*}
We apply Chandee's argument \cite[Theorem~2.1 and Section~3]{Chandee} with $\lambda=1$; the condition $\lambda\le\log x/2$ is then $x\ge e^2$.  The two additional terms in Theorem~2.1 are
\begin{equation*}
\frac{2d}{\log^2x}+\frac{4de^{-1}}{x^{1/2}\log^2x},
\end{equation*}
and these are $O_K(1/\log x)$.  For completeness, the smoothed residue terms arising at $s=1$ and $s=0$, with $z=\sigma+it$, are
\begin{equation*}
\frac{1-x^{1-z}}{(z-1)^2\log x}-\frac1{z-1},
\qquad -\frac{x^{-z}}{z^2\log x}.
\end{equation*}
Uniformly for $1/2\le\sigma\le1/2+1/\log x$ in the stated ranges, their absolute values are respectively at most
\begin{equation*}
\frac1{|t|}+\frac{1+x^{1/2}}{t^2\log x},
\qquad \frac{x^{-1/2}}{(t^2+1/4)\log x},
\end{equation*}
so their sum is $O_K(1/\log x)$.  Replacing Chandee's conductor by $\mathcal C_K(t)$ changes the right-hand side by the same amount.  This proves the lemma.
\end{proof}
\begin{lem}\label{artin-square-coefficients}
Let $L/\mathbb Q$ be finite Galois, let $G=\Gal(L/\mathbb Q)$, and let $K=L^H$.  If $\chi_H=\operatorname{Ind}_H^G\mathbf 1_H$, then there are cyclic subgroups $C_1,\ldots,C_m\le G$ and rational numbers $q_1,\ldots,q_m$ such that
\begin{equation}\label{eq:artin-square-decomposition}
\chi_H(\sigma^2)
=\sum_{\nu=1}^m q_\nu
\operatorname{Ind}_{C_\nu}^G\mathbf 1_{C_\nu}(\sigma).
\end{equation}
Consequently, for every rational prime $p$ unramified in $L$,
\begin{equation}\label{eq:artin-square-prime}
\frac{\Lambda_K(p^2)}{\log p}
=\sum_{\nu=1}^m q_\nu
\frac{\Lambda_{L^{C_\nu}}(p)}{\log p}.
\end{equation}
\end{lem}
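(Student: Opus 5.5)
We prove the character identity \eqref{eq:artin-square-decomposition} first, using Artin's induction theorem, and then deduce \eqref{eq:artin-square-prime} by evaluating both sides at a Frobenius element.

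\textbf{Step 1: the class function $\sigma\mapsto\chi_H(\sigma^2)$.} Set $\psi(\sigma)=\chi_H(\sigma^2)$. This is a class function on $G$ with rational (indeed integer) values. It is a virtual character, since it is the second Adams operation applied to $\chi_H$: $\psi=\chi_H^{(2)}=\mathrm{Sym}^2\chi_H-\wedge^2\chi_H$. Moreover $\psi(\sigma)$ depends only on the cyclic subgroup $\langle\sigma\rangle$. Indeed, if $\langle\sigma\rangle=\langle\tau\rangle$, then $\langle\sigma^2\rangle=\langle\tau^2\rangle$. The value $\chi_H(\rho)=|\Fix(\rho)|$ depends only on $\langle\rho\rangle$, because $\rho$ and $\rho^k$ with $(k,\operatorname{ord}\rho)=1$ have the same fixed points on $G/H$.

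\textbf{Step 2: apply Artin's induction theorem.} Artin's theorem says that a rational-valued virtual character which is constant on generators of each cyclic subgroup is a $\mathbb Q$-linear combination of the permutation characters $\operatorname{Ind}_C^G\mathbf 1_C$, with $C$ ranging over the cyclic subgroups. The hypothesis holds by Step 1.

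If we want to avoid quoting the theorem in this form, the argument is short. The characters $\operatorname{Ind}_C^G\mathbf 1_C$, indexed by conjugacy classes of cyclic subgroups $C$, take values
\[
\operatorname{Ind}_C^G\mathbf 1_C(\sigma)=|\{gC:\sigma\in gCg^{-1}\}|.
\]
Order the classes of cyclic subgroups by size. The matrix of values $\operatorname{Ind}_C^G\mathbf 1_C(\text{generator of } D)$ is then triangular with nonzero diagonal entries $[N_G(C):C]$, since the entry vanishes unless $D$ is conjugate to a subgroup of $C$. Any rational function on conjugacy classes of cyclic subgroups is therefore a unique $\mathbb Q$-combination of these characters. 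This gives the $C_\nu$ and $q_\nu$ in \eqref{eq:artin-square-decomposition}.

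\textbf{Step 3: interpret both sides at an unramified prime.} Let $p$ be unramified in $L$ and let $\sigma=\Frob_p$, defined up to conjugacy. For $F=L^{H'}$, the primes of $F$ above $p$ correspond to the $\langle\sigma\rangle$-orbits on $G/H'$, and a prime has residue degree $f$ exactly when its orbit has size $f$. From the Euler product, $\Lambda_F(p^k)/\log p=\sum_{\mathfrak p\mid p,\ f(\mathfrak p)\mid k} f(\mathfrak p)$. This equals the number of points of $G/H'$ fixed by $\sigma^k$, that is, $\operatorname{Ind}_{H'}^G\mathbf 1(\sigma^k)$.

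Taking $H'=H$ and $k=2$ gives
\[
\frac{\Lambda_K(p^2)}{\log p}=\chi_H(\sigma^2).
\]
Taking $H'=C_\nu$ and $k=1$ gives
\[
\frac{\Lambda_{L^{C_\nu}}(p)}{\log p}=\operatorname{Ind}_{C_\nu}^G\mathbf 1(\sigma).
\]
Substituting these into \eqref{eq:artin-square-decomposition} yields \eqref{eq:artin-square-prime}.

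The main obstacle is Step 2, namely verifying that $\chi_H(\sigma^2)$ lies in the $\mathbb Q$-span of the cyclic permutation characters. The triangularity argument should handle it cleanly once the dependence on $\langle\sigma\rangle$ from Step 1 is checked carefully. Step 3 is routine bookkeeping with the fixed-point formula for Frobenius.
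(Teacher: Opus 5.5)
Your proof is correct. Step 3 is the same Frobenius fixed-point computation the paper uses.

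The difference lies in Step 2, where you place $\sigma\mapsto\chi_H(\sigma^2)$ in the $\mathbb Q$-span of the cyclic permutation characters.

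\textbf{The paper's route.} It writes the function explicitly as an integral combination of permutation characters. With $X=G/H$, $f(\sigma)$ the number of fixed points and $t(\sigma)$ the number of two-cycles, it uses $\chi_H(\sigma^2)=f(\sigma)+2t(\sigma)=2\chi_{\mathcal P_2(X)}(\sigma)+2\chi_H(\sigma)-\chi_{X\times X}(\sigma)$. It then quotes rational Artin induction from Serre. Your remark $\chi_H(\sigma^2)=(\mathrm{Sym}^2\chi_H-\wedge^2\chi_H)(\sigma)$ plays exactly this role if you take the citation route.

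\textbf{Your self-contained route.} You use only that the function depends on the conjugacy class of $\langle\sigma\rangle$, since $\langle\sigma^2\rangle$ is determined by $\langle\sigma\rangle$. You then invert the triangular table of values $\operatorname{Ind}_C^G\mathbf 1_C(\text{generator of }D)$ over cyclic subgroups, whose diagonal entries are $[N_G(C):C]$.

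\textbf{What each buys.} Your argument makes the virtual-character property unnecessary, so the Adams-operation remark and the cited theorem can be dropped. It proves the stronger fact that every $\mathbb Q$-valued function of the conjugacy class of $\langle\sigma\rangle$ lies in that span, and it gives the $q_\nu$ constructively by back-substitution. The paper's identity exhibits more structure: $\chi_H(\sigma^2)$ is a virtual permutation character with integer coefficients. It still needs the external theorem to pass to cyclic subgroups.

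\textbf{One point left implicit.} To conclude equality on all of $G$ from equality on generators, note that each $\operatorname{Ind}_C^G\mathbf 1_C(\sigma)$ also depends only on the conjugacy class of $\langle\sigma\rangle$. This is immediate from your formula $|\{gC:\sigma\in gCg^{-1}\}|$, because $\sigma\in gCg^{-1}$ if and only if $\langle\sigma\rangle\subseteq gCg^{-1}$.
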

\begin{proof}
Let $X = G/H$, so that $\chi_H(\sigma)$ is the number of fixed
points of $\sigma$ on $X$.  Let $f(\sigma)$ be the number of fixed
points of $\sigma$, and let $t(\sigma)$ be the number of two-cycles
in the cycle decomposition of $\sigma$ on $X$.

Let $\mathcal P_2(X)$ be the set of two-element subsets of $X$, and
denote its permutation character by $\chi_{\mathcal P_2(X)}$.  A
two-element subset of $X$ is fixed by $\sigma$ precisely when its two
elements are both fixed by $\sigma$ or when they form a two-cycle.
Consequently,
\begin{equation*}
\chi_{\mathcal P_2(X)}(\sigma)
=\binom{f(\sigma)}{2}+t(\sigma).
\end{equation*}
The permutation character of the diagonal action of $G$ on $X\times X$
has value $f(\sigma)^2$.  Since $\sigma^2$ fixes exactly the points
belonging to cycles of $\sigma$ of length one or two, it follows that
\begin{equation*}
\begin{split}
\chi_H(\sigma^2)
&=f(\sigma)+2t(\sigma)\\
&=2\chi_{\mathcal P_2(X)}(\sigma)
  +2\chi_H(\sigma)-\chi_{X\times X}(\sigma).
\end{split}
\end{equation*}
Thus the class function $\sigma\mapsto\chi_H(\sigma^2)$ is an integral
linear combination of permutation characters defined over $\mathbb Q$.
By rational Artin induction (see \cite[Chapter 13, Section 1, Theorem 30]{Serre}), the class function is a finite rational linear combination of permutation characters induced from cyclic subgroups.  Hence there are cyclic subgroups
$C_1,\ldots,C_m\le G$ and rational numbers $q_1,\ldots,q_m$ for which
\eqref{eq:artin-square-decomposition} holds.

Now let $p$ be unramified in $L$.  More generally, let $J\le G$ and
write $F = L^J$.  The cycles of $\Frob_p$ on $G/J$ correspond to the
primes of $F$ above $p$, and the length of each cycle is the residue
degree of the corresponding prime.  If $r_f(F,p)$ denotes the number
of primes of $F$ above $p$ having residue degree $f$, then
\begin{equation*}
\operatorname{Ind}_J^G\mathbf 1_J(\Frob_p^a)
=\sum_{f\mid a} f\,r_f(F,p).
\end{equation*}
On the other hand, the Euler factor of $\zeta_F(s)$ at $p$ gives
\begin{equation*}
\frac{\Lambda_F(p^a)}{\log p}
=\sum_{f\mid a} f\,r_f(F,p).
\end{equation*}
Therefore
\begin{equation*}
\frac{\Lambda_{L^J}(p^a)}{\log p}
=\operatorname{Ind}_J^G\mathbf 1_J(\Frob_p^a).
\end{equation*}
Taking $J=H$ and $a=2$, and then applying
\eqref{eq:artin-square-decomposition}, gives
\begin{equation*}
\begin{split}
\frac{\Lambda_K(p^2)}{\log p}
&=\chi_H(\Frob_p^2)\\
&=\sum_{\nu=1}^m q_\nu
\operatorname{Ind}_{C_\nu}^G\mathbf 1_{C_\nu}(\Frob_p)\\
&=\sum_{\nu=1}^m q_\nu
\frac{\Lambda_{L^{C_\nu}}(p)}{\log p},
\end{split}
\end{equation*}
which proves \eqref{eq:artin-square-prime}.
\end{proof}
The following lemma establishes an explicit formula for the twisted Chebyshev function associated with $F$.
\begin{lem}\label{lem:twisted-chebyshev}
Let $F$ be a fixed number field, assume GRH for $\zeta_F(s)$, and let
$T\ge3$.  For
\begin{equation*}
\Psi_F(z,\tau)=\sum_{n\le z}\Lambda_F(n)n^{-i\tau},
\end{equation*}
one has
\begin{equation}\label{eq:twisted-chebyshev-bound}
\Psi_F(z,\tau)\ll_F\frac zT+z^{1/2}\log^2T,
\end{equation}
uniformly for $2\le z\le T$ and $T\le\tau\le5T$.
\end{lem}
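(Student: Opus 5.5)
We will prove \eqref{eq:twisted-chebyshev-bound} by a truncated explicit formula for the twisted sum. Note first that $\Psi_F(z,\tau)$ is the Chebyshev-type sum attached to the shifted function $\zeta_F(s+i\tau)$. Our starting point is therefore Perron's formula, applied to $-\frac{\zeta_F'}{\zeta_F}(s+i\tau)$ with $c=1+1/\log z$ and a truncation height $U$ to be chosen. This gives
\begin{equation*}
\Psi_F(z,\tau)=\frac1{2\pi i}\int_{c-iU}^{c+iU}-\frac{\zeta_F'}{\zeta_F}(s+i\tau)\frac{z^s}{s}\,ds+O_F\left(\frac{z\log^2(zU)}{U}+\log z\right),
\end{equation*}
where the error uses $|\Lambda_F(n)|\le [F:\mathbb Q]\Lambda(n)$ and the standard estimate for $n$ near $z$.

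Next we shift the contour to the left, to $\Re s=-1/2$ say, and pick up residues.
\begin{itemize}
\item The pole of $\zeta_F$ at $1$ gives a pole of $-\zeta_F'/\zeta_F(s+i\tau)$ at $s=1-i\tau$, with residue $z^{1-i\tau}/(1-i\tau)$. Since $\tau\ge T$, this is $\ll z/T$; it is the source of the first term in the bound.
\item The nontrivial zeros $\rho=\frac12+i\gamma$ of $\zeta_F$ (on the critical line by GRH) produce terms $-z^{\rho-i\tau}/(\rho-i\tau)$ for those zeros with $|\gamma-\tau|\le U$.
\item The pole of $z^s/s$ at $s=0$ contributes $-\frac{\zeta_F'}{\zeta_F}(i\tau)$. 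This is $\ll_F \log T$, because $\tau$ is far from the trivial zeros and, by the zero-density count below, from a suitable nearby choice of the horizontal segments.
\end{itemize}
We choose $U$ and perturb the heights $\tau\pm U$ by $O(1)$ so that they avoid zeros by $\gg 1/\log T$. Then the horizontal segments are bounded by $\ll z\log^2 T/U$, using the standard bound $\frac{\zeta_F'}{\zeta_F}(\sigma+iv)=\sum_{|\gamma-v|\le1}\frac{1}{s-\rho}+O_F(\log|v|)$. The left vertical line contributes $O_F(z^{-1/2}\log T)$.

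The main step is the sum over zeros. Let $N_F(v)=\#\{\rho:|\gamma-v|\le 1\}$; the Riemann--von Mangoldt formula for $\zeta_F$ gives $N_F(v)\ll_F\log(|v|+3)$. Split the zeros into unit windows according to $\gamma-\tau\in[k,k+1]$ with $|k|\le U$. Each zero contributes at most $z^{1/2}/|\frac12+i(\gamma-\tau)|$, and $|\gamma|\le \tau+U\ll T$ in this range. Hence
\begin{equation*}
\sum_{|\gamma-\tau|\le U}\frac{z^{1/2}}{|\rho-i\tau|}\ll_F z^{1/2}\sum_{|k|\le U}\frac{\log T}{1+|k|}\ll z^{1/2}\log T\log U.
\end{equation*}

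Finally take $U=T$. The truncation and horizontal errors become $z\log^2 T/T$, and the zero sum becomes $z^{1/2}\log^2T$. Since $z\le T$, we have $z\log^2T/T\le z^{1/2}\log^2 T$, so these errors are absorbed; the remaining terms are $\ll_F z/T+z^{1/2}\log^2T$.

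The main obstacle is bookkeeping rather than depth. The logarithmic factors must be kept uniform for $\tau$ up to $5T$, so all zero counts are at height $\asymp T$ and $\log(|\gamma|+3)\ll\log T$. The truncation error must also be controlled honestly. If the crude Perron error is awkward near prime powers close to $z$, we will instead use the smoothed version: first prove the bound for $\int_0^z\Psi_F(u,\tau)\,du$, which converges absolutely over zeros, and then difference with step $h=z^{1/2}$ at cost $O(h\log z)$. That route avoids the sharp cutoff entirely.
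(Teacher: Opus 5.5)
Your argument follows the paper's route: Perron at $c=1+1/\log z$, a shift to $\Re s=-1/2$, GRH with unit-window zero counting to get $z^{1/2}\log T\log U$, and absorption of $z\log^2T/U$ using $z\le T$. But the choice $U=T$ breaks the horizontal-segment step.

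The horizontal sides correspond to $\zeta_F$ evaluated at heights $\tau\pm U$. For $\tau\in[T,5T]$, the lower height $\tau-U$ ranges over $[0,4T]$, up to your $O(1)$ perturbation. When $\tau$ is within $O(1)$ of $T$, that side runs within $O(1)$ of the real axis, that is, through or beside the pole of $\zeta_F$ at $1$ and the trivial zero of order $r_1+r_2-1$ at $0$. The local formula $\frac{\zeta_F'}{\zeta_F}(\sigma+iv)=\sum_{|\gamma-v|\le1}(\sigma+iv-\rho)^{-1}+O_F(\log|v|)$ is only valid for $|v|\ge2$. So your bound $z\log^2T/U$ for that side is not justified, and for $\tau=U$ the integrand even has a pole on the contour. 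Your perturbation only avoids nontrivial ordinates, so it does not address this.

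The residue bookkeeping is also off. With $U\le\tau$, the point $s=1-i\tau$ is not strictly inside the contour, so the term $z^{1-i\tau}/(1-i\tau)$ is not picked up. You also omit the residue at $s=-i\tau$ coming from the trivial zero at $0$.

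The paper avoids all of this by taking $U\in[6T,7T]$, so that $\tau+U\ge7T$ and $\tau-U\le-T$. Both sides then sit at height $\ge T$ in absolute value, where the local formula applies. The pole and the trivial zero lie strictly inside, contributing $O(z/T)$ and $O(1/T)$. Choosing $U\le T/2$, which puts both outside, would work equally well. With this change your proof goes through.

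Three smaller points. First, the bound $\frac{\zeta_F'}{\zeta_F}(i\tau)\ll_F\log T$ is correct, but the reason is that under GRH every nontrivial zero is at distance at least $1/2$ from $i\tau$. Each of the $O(\log T)$ terms in the local sum is then at most $2$; the horizontal segments play no role there. Second, the left vertical side gives $O(z^{-1/2}\log T\log U)$ rather than $O(z^{-1/2}\log T)$, which is harmless. Third, your smoothed fallback works only if you difference the explicit formula for $\int_0^z\Psi_F(u,\tau)\,du$ term by term. You need to bound the contribution of each zero by $\min\{z^{1/2}/|\rho-i\tau|,\ z^{3/2}/(h|\rho-i\tau|^2)\}$. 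Differencing only a size bound $O(z^{3/2}\log T)$ for the integral with $h=z^{1/2}$ would give merely $O(z\log T)$.
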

\begin{proof}
Set $c=1+1/\log z$.  The zero-counting formula for the completed
Dedekind zeta function gives, with $\rho_F=\beta_F+i\gamma_F$ running over
the nontrivial zeros counted with multiplicity,
\begin{equation}\label{eq:unit-zero-count}
\#\{\rho_F=\beta_F+i\gamma_F:u\le\gamma_F\le u+1\}
\ll_F\log(|u|+3).
\end{equation}
The partial-fraction expansion gives, uniformly for
$-1/2\le\sigma\le c$, $|v|\ge2$, and away from zeros,
\begin{equation}\label{eq:log-derivative-local}
\frac{\zeta_F'}{\zeta_F}(\sigma+iv)
=\sum_{|\gamma_F-v|\le1}
\frac1{\sigma+iv-\rho_F}+O_F(\log(|v|+3)).
\end{equation} 
It follows from \eqref{eq:unit-zero-count} that one may choose
$U\in[6T,7T]$ such that both $\tau+U$ and $\tau-U$ have distance
$\gg_F1/\log T$ from every ordinate $\gamma_F$.  Such $U$ exists since the intervals
of length a sufficiently small fixed multiple of $1/\log T$ excluded by
the $O_F(T\log T)$ ordinates meeting the relevant range have total length
less than $T/2$. Neither horizontal side passes through the pole or a trivial zero, since
$\tau+U\ge7T$ and $\tau-U\le-T$.

Perron's formula and
$|\Lambda_F(n)|\le[F:\mathbb Q]\Lambda(n)$ give
\begin{align}
\Psi_F(z,\tau)
&=\frac1{2\pi i}\int_{c-iU}^{c+iU}
-\frac{\zeta_F'}{\zeta_F}(w+i\tau)
\frac{z^w}{w}\,dw
+O_F\left(\log(2z)+\frac{z\log^2T}{U}\right).
\label{eq:twisted-perron}
\end{align}

To verify the error, it is enough to bound
\begin{equation}\label{eq: error_in_perron}
\sum_{n=1}^{\infty}\Lambda_F(n)\left(\frac zn\right)^c
\min\left\{1,\frac1{U|\log(z/n)|}\right\}.
\end{equation}
For the at most three integers with $|n-z|\le1$ the summands contribute $O_F(\log(2z))$. Now assume $|n-z|>1$. For $z/2<n<2z$, we have
$|\log(z/n)|\asymp|n-z|/z$, so \eqref{eq: error_in_perron} is
\begin{equation*}
\ll_F\frac{z^{c}\log(2z)}U\sum_{1\le m\le2z}\frac1m
\ll_F\frac{z\log(2z)\log(2U)}U.
\end{equation*}
If $n\le z/2$ and $n\ge2z$, then $|\log(z/n)|\gg 1$, so \eqref{eq: error_in_perron} is bounded by 
\begin{equation*}
    \frac{z}{U}\frac{\zeta_F'}{\zeta_F}(c) \ll \frac{z\log z}{U}.
\end{equation*}
 This proves the error term
in \eqref{eq:twisted-perron}.  At an integral endpoint, Perron's formula
has half weight and restoring the full endpoint costs $O_F(\log(2z))$.

Move the contour in \eqref{eq:twisted-perron} to $\Re w=-1/2$.  The contour crosses the singularities
\begin{equation*}
w=1-i\tau,\qquad w=\rho_F-i\tau,\qquad w=-i\tau,\qquad w=0.
\end{equation*}
The point $w=-i\tau$ contributes only when $\zeta_F$ has a zero at
$s=0$; its order is $r_1(F)+r_2(F)-1$.  By contrast, $w=0$ always
arises from the Perron factor $1/w$.  All negative trivial zeros lie to
the left of the new contour.  The residue theorem therefore gives
\begin{align}
\Psi_F(z,\tau)
&=\frac{z^{1-i\tau}}{1-i\tau}
-\sum_{\substack{\rho_F=1/2+i\gamma_F\\
|\gamma_F-\tau|<U}}
\frac{z^{\rho_F-i\tau}}{\rho_F-i\tau}
-\frac{\zeta_F'}{\zeta_F}(i\tau) \nonumber\\
&\quad+
\frac{(r_1(F)+r_2(F)-1)z^{-i\tau}}{i\tau}
+O_F\left(\log^2T
+z^{-1/2}\log^2T\right).
\label{eq:twisted-explicit-formula}
\end{align}
For $-1/2\le\sigma\le c$, equation \eqref{eq:log-derivative-local} gives
\begin{equation}\label{eq:good-perron-height}
\frac{\zeta_F'}{\zeta_F}(\sigma+i(\tau\pm U))
\ll_F\log^2T.
\end{equation}
By \eqref{eq:good-perron-height}, each horizontal integral is
\begin{equation*}
\ll_F\frac{\log^2T}{U}\int_{-1/2}^{c}z^\sigma\,d\sigma
\ll_F\frac zT\log^2T.
\end{equation*} 
The functional equation and
Stirling's formula give
\begin{equation*}
\frac{\zeta_F'}{\zeta_F}\left(-\frac12+iv\right)
\ll_F\log(|v|+3).
\end{equation*}
Hence the integral over the new vertical segment satisfies
\begin{equation*}
\ll_Fz^{-1/2}\int_{-U}^{U}
\frac{\log(|\tau+v|+3)}{1+|v|}\,dv
\ll_Fz^{-1/2}\log^2T.
\end{equation*}
Together with the Perron error, these estimates establish the error term in \eqref{eq:twisted-explicit-formula}. It remains to bound the explicit contributions from the logarithmic derivative and the nontrivial zeros.

Applying  \eqref{eq:log-derivative-local} with $\Re s=0$, we obtain
\begin{equation*}
\frac{\zeta_F'}{\zeta_F}(i\tau)\ll_F\log^2T.
\end{equation*}
Since $U\asymp T$, the pole at $s=1$ contributes $O(z/T)$, while the possible zero at $s=0$ contributes $O_F(1/T)$.  Under GRH and applying
\eqref{eq:unit-zero-count}, we obtain
\begin{equation*}
\sum_{\substack{\rho_F=1/2+i\gamma_F\\
|\gamma_F-\tau|<U}} \frac{z^{\rho_F-i\tau}}{\rho_F-i\tau}\ll 
\sum_{\substack{\rho_F=1/2+i\gamma_F\\
|\gamma_F-\tau|<U}}\frac{z^{1/2}}{1+|\gamma_F-\tau|}
\ll_Fz^{1/2}\log T\sum_{0\le j\le U}\frac1{1+j}
\ll_Fz^{1/2}\log^2T.
\end{equation*}
Substitution in \eqref{eq:twisted-explicit-formula} proves
\eqref{eq:twisted-chebyshev-bound}.
\end{proof}

\begin{lem}\label{dedekind-square-tail}
Let $K$ be a fixed number field, let $L/\mathbb Q$ be a fixed finite Galois extension containing $K$, and assume GRH for $\zeta_L(s)$.  There is $T_0=T_0(K,L)\ge3$ such that, for every $T\ge T_0$, uniformly for $T/2\le t\le5T/2$ and $e^2\le x\le T^2$,
\begin{equation*}
\Re\sum_{\log T<p\le\sqrt x}
\frac{\Lambda_K(p^2)}{2p^{1+2it}\log p}
\frac{\log(x/p^2)}{\log x}\,p^{-2/\log x}
=O_{K,L}(1).
\end{equation*}
\end{lem}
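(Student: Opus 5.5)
We convert the prime-square sum into a combination of prime sums over intermediate fields, and then bound each of these by partial summation against the twisted Chebyshev bound.

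\textbf{Step 1: reduction to prime sums.} For the finitely many primes $p>\log T$ that ramify in $L$, we use $|\Lambda_K(p^2)|\le[K:\mathbb Q]\log p$. Their total contribution is $O_{K,L}(1)$; indeed they are absent once $T_0$ is large. For unramified $p$, Lemma~\ref{artin-square-coefficients} gives
\begin{equation*}
\frac{\Lambda_K(p^2)}{\log p}=\sum_{\nu=1}^m q_\nu\frac{\Lambda_{F_\nu}(p)}{\log p},\qquad F_\nu=L^{C_\nu},
\end{equation*}
with $m$ and the $q_\nu$ depending only on $K$ and $L$. It therefore suffices to show, for each fixed intermediate field $F=F_\nu$, that
\begin{equation*}
S_F:=\sum_{\log T<p\le\sqrt x}\frac{\Lambda_F(p)}{p^{1+2it}\log^2 p}\,w(p)=O_{F}(1),\qquad w(u)=\frac{\log(x/u^2)}{\log x}\,u^{-2/\log x}.
\end{equation*}
The weight satisfies $0\le w\le1$ and $|w'(u)|\ll 1/(u\log x)$. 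Since $F\subseteq L$, the Aramata--Brauer theorem shows that GRH for $\zeta_L$ implies GRH for $\zeta_F$. Lemma~\ref{lem:twisted-chebyshev} is therefore available for $F$.

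\textbf{Step 2: passing to $\Psi_F$.} We replace the sum over primes by the full sum over $n$ weighted by $\Lambda_F(n)$. The terms with $n=p^k$, $k\ge2$, and $p^k>\log T$ contribute
\begin{equation*}
\ll\sum_{n>\log T}\frac{\Lambda(n)}{n\log^2 n}\mathbf 1_{n\ \text{not prime}}\ll 1,
\end{equation*}
since prime powers $p^k$ with $k\ge2$ have $\sum p^{-k}<\infty$. We then apply partial summation with $\tau=2t\in[T,5T]$ and the weight $\phi(u)=w(u)/(u\log^2u)$ on $[\log T,\sqrt x]$. This requires $\sqrt x\le T$, which holds because $x\le T^2$. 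We have $\phi(u)\ll 1/(u\log^2u)$ and $|\phi'(u)|\ll 1/(u^2\log^2u)$. The sum becomes
\begin{equation*}
\Psi_F(\sqrt x,\tau)\phi(\sqrt x)-\Psi_F(\log T,\tau)\phi(\log T)-\int_{\log T}^{\sqrt x}\Psi_F(u,\tau)\phi'(u)\,du .
\end{equation*}
Lemma~\ref{lem:twisted-chebyshev} gives $\Psi_F(u,\tau)\ll u/T+u^{1/2}\log^2T$. The $u/T$ part contributes
\begin{equation*}
\ll\int_{\log T}^{T}\frac{du}{Tu\log^2u}+\frac{1}{T}\ll 1.
\end{equation*}

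\textbf{Main obstacle.} The difficulty is the $u^{1/2}\log^2T$ term at small $u$. It gives
\begin{equation*}
\int_{\log T}^{\infty}\frac{\log^2T}{u^{3/2}\log^2u}\,du\asymp\frac{(\log T)^{3/2}}{(\log\log T)^2},
\end{equation*}
which is not bounded. So the GRH bound is too weak in the range $\log T<u\le(\log T)^{4+\varepsilon}$. There I will use the trivial bound $|\Psi_F(u,\tau)|\ll u$ instead. On the dyadic range $u\sim U$ it contributes
\begin{equation*}
\sum_{U\sim u}\frac{1}{p\log p}\ll\frac{1}{\log^2U}.
\end{equation*}
Summed over $U=2^j$ between $\log T$ and $(\log T)^{5}$, this is about $\log\log T/(\log\log T)^2\cdot O(1)=O(1)$. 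More precisely, $\sum_{\log T<p\le(\log T)^5}1/(p\log p)\ll 1/\log\log T$ by Mertens' theorem.

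For $u>Y:=(\log T)^5$ the GRH term contributes
\begin{equation*}
\ll\log^2T\int_Y^\infty u^{-3/2}\,du\ll\log^2T\cdot Y^{-1/2}\ll(\log T)^{-1/2}.
\end{equation*}
The boundary term at $Y$ is handled in the same way. Combining the two ranges gives $S_F=O_F(1)$, and summing over $\nu$ with the fixed weights $q_\nu$ yields the stated bound.
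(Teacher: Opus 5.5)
Your argument is essentially the paper's proof. You reduce via Lemma~\ref{artin-square-coefficients} and Aramata--Brauer to prime sums over the fields $L^{C_\nu}$, bound the primes $\log T<p\le(\log T)^{A}$ absolutely by Mertens, and treat the rest by partial summation against Lemma~\ref{lem:twisted-chebyshev} after discarding prime powers; the paper takes $A=10$ where you take $A=5$. One slip that does not affect the conclusion: substituting the Artin identity into $\Lambda_K(p^2)/(2p\log p)$ gives the summand $\Lambda_F(p)w(p)/(2p^{1+2it}\log p)$ with a single $\log p$, not $\log^2p$, so your short range contributes $\ll\sum_{\log T<p\le(\log T)^5}p^{-1}\ll1$ rather than $\ll1/\log\log T$, and the tail beyond $(\log T)^5$ is still $o(1)$.
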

\begin{proof}
Let $H=\Gal(L/K)$, so that $K=L^H$, and set
\begin{equation*}
        Y=(\log T)^{10},
        \qquad
        X=\sqrt x,
        \qquad
        \tau=2t,
\end{equation*}
and
\begin{equation*}
        \omega_x(u)
        =u^{-2/\log x}\frac{\log(x/u^2)}{\log x}.
\end{equation*}
For $1\le u\le X$ one has $0\le\omega_x(u)\le1$.  Hence, using
$|\Lambda_K(p^2)|\ll_K\log p$ and Mertens' estimate,
\begin{equation*}
        \sum_{\log T<p\le\min\{Y,X\}}
        \frac{|\Lambda_K(p^2)|}{p\log p}\omega_x(p)
        \ll_K
        \sum_{\log T<p\le Y}\frac1p
        \ll_K1.
\end{equation*}
If $X\le4Y$, the remaining primes lie in $Y<p\le4Y$ and contribute
$O_K(1)$.  We may therefore suppose that $X>4Y$.

After enlarging $T_0$, every rational prime ramified in $L$ is smaller than
$\log T$.  Thus \eqref{eq:artin-square-prime} applies to every prime in the
remaining sum.  Moreover, for each cyclic subgroup $C_\nu$ occurring in
Lemma~\ref{artin-square-coefficients}, the extension $L/L^{C_\nu}$ is
Galois.  By the Aramata--Brauer theorem \cite{Aramata,Brauer}, GRH for
$\zeta_L$ implies GRH for $\zeta_{L^{C_\nu}}$.  It therefore suffices to
prove that, for each fixed field $F=L^{C_\nu}$,
\begin{equation}\label{eq:square-tail-auxiliary-field}
        \sum_{Y<p\le X}
        \frac{\Lambda_F(p)}{2p^{1+i\tau}\log p}\omega_x(p)
        \ll_F1.
\end{equation}

Lemma~\ref{lem:twisted-chebyshev} gives
\eqref{eq:twisted-chebyshev-bound} uniformly for $2\le z\le T$.
Let
\begin{equation*}
        W_x(u)=\frac{\omega_x(u)}{2u\log u}.
\end{equation*}
Since $X^2=x$, it follows that $W_x(X)=0$.  Also, for $Y\le u\le X$,
\begin{equation*}
        \omega_x'(u)
        =
        -\frac4{u\log x}
        u^{-2/\log x}
        \left(1-\frac{\log u}{\log x}\right),
\end{equation*}
and hence
\begin{equation}\label{eq:square-tail-weight-bounds}
        W_x(u)\ll\frac1{u\log u},
        \qquad
        W_x'(u)\ll\frac1{u^2\log u}.
\end{equation}
Partial summation, followed by
\eqref{eq:twisted-chebyshev-bound} and
\eqref{eq:square-tail-weight-bounds}, gives
\begin{align*}
&\left|
\sum_{Y<n\le X}\Lambda_F(n)n^{-i\tau}W_x(n)
\right| \\
&\qquad\le
|\Psi_F(Y,\tau)W_x(Y)|
+\int_Y^X|\Psi_F(u,\tau)W_x'(u)|\,du \\
&\qquad\ll_F
\frac{\log^2T}{\sqrt Y\log Y}
+\frac1{T\log Y}
+\log^2T\int_Y^X\frac{\,du}{u^{3/2}\log u}
+\frac1T\int_Y^X\frac{\,du}{u\log u} \\
&\qquad\ll_F
\frac{\log^2T}{\sqrt Y\log Y}
+\frac{\log\log T}{T}
\ll_F1,
\end{align*}
because $Y=(\log T)^{10}$ and $X\le T$.

It remains to remove the contribution of the terms for which $n$ is a proper prime power.  Since
$|\Lambda_F(p^a)|\ll_F\log p$ and $|\omega_x(u)|\le1$ for $u\le X$,
\begin{align*}
\sum_{\substack{Y<p^a\le X\\a\ge2}}
\left|
\Lambda_F(p^a)(p^a)^{-i\tau}W_x(p^a)
\right|
&\ll_F
\sum_{\substack{p^a>Y\\a\ge2}}
\frac1{a p^a}\ll_FY^{-1/2}.
\end{align*}
Thus
\begin{equation*}
        \sum_{Y<p\le X}
        \frac{\Lambda_F(p)}{2p^{1+i\tau}\log p}\omega_x(p)
        \ll_F1,
\end{equation*}
which proves \eqref{eq:square-tail-auxiliary-field}.

Finally, applying \eqref{eq:artin-square-prime}, summing over the finitely many
auxiliary fields $L^{C_\nu}$, and combining this with the contribution from the already treated
range $\log T<p\le Y$, gives
\begin{equation*}
        \sum_{\log T<p\le X}
        \frac{\Lambda_K(p^2)}{2p^{1+i\tau}\log p}\omega_x(p)
        \ll_{K,L}1.
\end{equation*}
Since $X=\sqrt x$ and $\tau=2t$, taking real parts proves the lemma.
\end{proof}

	\begin{lem}\label{conditional-upper-bound}
			Let $L/\mathbb Q$ be finite Galois and let $K$ be an intermediate field. Assume GRH for $\zeta_L(s)$. There is $T_0=T_0(K,L)\ge3$ such that, for $T\ge T_0$, uniformly for $T/2\le t\le5T/2$ and $e^2\le x\le T^2$, we have
		\begin{align*}
			&\log \left| \zeta_K\left(\tfrac{1}{2}+it\right)\right|
				\le  \Re\left( \sum_{p\le  x} \frac{\Lambda_K(p)}{p^{\frac{1}{2}+\frac{1}{\log x}+ it}\log p} \frac{\log\left(x/p\right)}{\log x}+ \sum_{p\le  \min\{ \sqrt{x}, \ \log T\}} \frac{\Lambda_K(p^2)}{2p^{1+2it}\log p}\right)\\
					& \hspace{3.5cm}+ \frac{[K:\mathbb{Q}]\log T}{\log x} +O_{K,L}(1),
		\end{align*}
		where $p$ denotes the rational prime.
	\end{lem}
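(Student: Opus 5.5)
We start from Lemma~\ref{conditional-bound-chandee}, applied to $K$. GRH for $\zeta_K$ is available because the Aramata--Brauer theorem transfers it from $\zeta_L$. The lemma bounds $\log|\zeta_K(\frac12+it)|$ by a smoothed sum over all prime powers $n\le x$, plus $\log\mathcal C_K(t)/\log x + O_K(1/\log x)$.

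For the conductor term we note that $T/2\le t\le 5T/2$, so $\log\mathcal C_K(t)=[K:\mathbb Q]\log T+O_K(1)$. Since $\log x\ge 2$, dividing by $\log x$ gives $[K:\mathbb Q]\log T/\log x+O_K(1)$.

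We then split the prime-power sum according to the exponent: $n=p$, $n=p^2$, and $n=p^a$ with $a\ge3$.

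\textbf{Terms $n=p^a$, $a\ge 3$.} Using $|\Lambda_K(n)|\le[K:\mathbb Q]\Lambda(n)$, the weight bounds $0\le \log(x/n)/\log x\le1$ and $n^{-1/\log x}\le1$, and $\log n=a\log p$, these terms contribute at most
\begin{equation*}
[K:\mathbb Q]\sum_p\sum_{a\ge3}\frac{1}{a\,p^{a/2}}\ll_K1.
\end{equation*}

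\textbf{Terms $n=p$.} These are exactly the first sum in the statement, since $\log n=\log p$. Nothing needs to be done.

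\textbf{Terms $n=p^2$, with $p\le\sqrt x$.} Here $\log n=2\log p$, so each term equals
\begin{equation*}
\frac{\Lambda_K(p^2)}{2p^{1+2it}\log p}\cdot\frac{\log(x/p^2)}{\log x}\,p^{-2/\log x}.
\end{equation*}
We split these according to whether $p\le\min\{\sqrt x,\log T\}$ or $\log T<p\le\sqrt x$.

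For $\log T<p\le \sqrt x$, the sum is precisely the one in Lemma~\ref{dedekind-square-tail}, so its real part is $O_{K,L}(1)$.

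For $p\le\min\{\sqrt x,\log T\}$, we must remove the weight $\omega_x(p)=p^{-2/\log x}\log(x/p^2)/\log x$. For $p\le\sqrt x$ it satisfies
\begin{equation*}
1-\omega_x(p)\ll\frac{\log p}{\log x}.
\end{equation*}
Since $|\Lambda_K(p^2)|\ll_K\log p$, replacing $\omega_x(p)$ by $1$ costs
\begin{equation*}
\ll_K\sum_{p\le\sqrt x}\frac{\log p}{p\log x}\ll1
\end{equation*}
by Mertens. This produces the second sum in the statement.

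Collecting the four contributions gives the lemma with error $O_{K,L}(1)$. The main obstacle is the tail $\log T<p\le\sqrt x$ of the prime-square sum. That is already handled by Lemma~\ref{dedekind-square-tail}, via Artin induction to cyclic fixed fields and the twisted Chebyshev bound. So the only remaining care is making the weight-removal estimate uniform for small $x$ (near $e^2$). There it is trivial, because the range of $p$ is then bounded.
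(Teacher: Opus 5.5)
Your proposal is correct and follows essentially the same route as the paper: Chandee's bound (Lemma~\ref{conditional-bound-chandee}) with the conductor term simplified to $[K:\mathbb Q]\log T/\log x+O_K(1)$, a trivial bound for the prime powers $p^a$ with $a\ge3$, Lemma~\ref{dedekind-square-tail} for the prime-square tail $\log T<p\le\sqrt x$, and removal of the weight on the remaining prime squares at cost $O_K(1)$ via Mertens. Your closing remark about small $x$ is unnecessary, since the bound $\frac{1}{\log x}\sum_{p\le\sqrt x}\frac{\log p}{p}\ll1$ already holds uniformly for all $x\ge e^2$.
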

	\begin{proof}
The Aramata--Brauer theorem shows that $\zeta_L/\zeta_K$ is entire, so GRH for $\zeta_L$ implies GRH for $\zeta_K$.  Choosing $\lambda=1$ in Lemma~\ref{conditional-bound-chandee}, and using $\log\mathcal C_K(t)=[K:\mathbb Q]\log T+O_K(1)$ in the stated range, we see that
		\begin{equation*}
			\log \left| \zeta_K\left(\tfrac{1}{2}+it\right)\right|\le  \Re\sum_{n\le  x}\frac{\Lambda_K(n)}{n^{\frac{1}{2}+\frac{1}{\log x}+it}\log n}\frac{\log\left(x/n\right)}{\log x} + \frac{[K:\mathbb{Q}]\log T}{\log x} +O_K\!\left(\tfrac{1}{\log x}\right).
		\end{equation*}
		Since $|\Lambda_K(n)|\le  [K:\mathbb{Q}]\Lambda(n)$ and $\log(x/n)/(n^{1/\log x}\log x)\le  1$ for $n\le  x$, we have
		\begin{equation*}
			\sum_{\substack{{p^j\le  x},\\ 
					j\geqslant 3}} \frac{\Lambda_K(p^j)}{(p^j)^{\frac{1}{2}+\frac{1}{\log x}+it}\log p^j}\frac{\log\left(x/p^j\right)}{\log x} \ll [K:\mathbb{Q}] 	\sum_{\substack{{p^j\le  x}, \\ 
					j\geqslant 3}} \frac{1}{jp^{j/2}} \ll 1.
		\end{equation*}
It remains to justify that the contribution of the prime-square sum with $\log T<p\le \sqrt x$ is $O_{K,L}(1)$.  By Lemma~\ref{dedekind-square-tail},
\begin{equation*}
        \Re\sum_{\log T<p\le  \sqrt x}
        \frac{\Lambda_K(p^2)}{2p^{1+2it}\log p}
        \frac{\log(x/p^2)}{\log x}\,p^{-2/\log x}
        \ll_{K,L}1.
\end{equation*}
For the remaining prime-square terms, the weight in Chandee's formula may be replaced by $1$, at a cost of $O_K(1)$.  Indeed, for $p\le\sqrt x$ one has
\begin{equation*}
        \left|1-\frac{\log(x/p^2)}{\log x}p^{-2/\log x}\right|
        \ll \frac{\log p}{\log x},
\end{equation*}
and therefore
\begin{equation*}
        \sum_{p\le\min\{\sqrt x,\log T\}}
        \frac{|\Lambda_K(p^2)|}{p\log p}
        \left|1-\frac{\log(x/p^2)}{\log x}p^{-2/\log x}\right|
        \ll_K
        \frac1{\log x}\sum_{p\le\sqrt x}\frac{\log p}{p}
        \ll_K1.
\end{equation*}
Retaining the prime-square terms with $p\le\min\{\sqrt x,\log T\}$ and absorbing the rest into the $O_{K,L}(1)$ term completes the lemma.
	\end{proof}
    The following proposition combines the Chandee-type majorants for the fields under consideration.
    
	\begin{prop}\label{main-prop} 
Let $K_1,\ldots, K_r$ be fixed number fields, let $L$ be the compositum of their Galois closures, and assume GRH for $\zeta_L(s)$. Also, let $a_1,a_2,\ldots,a_r$  be fixed positive real numbers, and let $a_K$ be a fixed real number satisfying $a_K>\sum_{i=1}^r a_i[K_i:\mathbb{Q}]$,
and let $b_1,b_2,\ldots,b_r$ be real numbers satisfying
$|b_i|\le  T/2$ for $1\le  i\le  r$. For any positive integer $n$, define
\begin{equation}\label{eq:def-h}
\mathfrak{h}(n):=\frac{a_1\Lambda_{K_1}(n)n^{-ib_1}+\cdots+a_r\Lambda_{K_r}(n)n^{-ib_r}}{2}.
\end{equation}
The elementary bound $|\Lambda_{K_i}(n)|\le[K_i:\mathbb Q]\Lambda(n)$ gives, for every rational prime $p$,
\begin{equation}\label{eq:h-prime-uniform-bounds}
        |\mathfrak h(p)|\le\frac12\sum_{i=1}^ra_i[K_i:\mathbb Q]\log p,
        \qquad
        |\mathfrak h(p^2)|\le\frac12\sum_{i=1}^ra_i[K_i:\mathbb Q]\log p.
\end{equation}
These bounds are uniform in the shifts.
There is $T_0=T_0(K_1,\ldots,K_r,L,\mathbf a,a_K)\ge3$ such that, for $T\ge T_0$, every $t\in[T,2T]$, and every $e^2\le x\le T^2$, \begin{align*}
			&a_1\log\left| \zeta_{K_1}\left(1/2+i(t+b_1)\right)\right| +\cdots + 	a_r\log\left| \zeta_{K_r}\left(1/2+i(t+b_r)\right)\right| \\
			& \le  2\Re\left( \sum_{p\le  x} \frac{\mathfrak{h}(p)}{p^{\frac{1}{2}+\frac{1}{\log x}+it}\log p} \frac{\log\left(x/p\right)}{\log x}+ \sum_{p\le  \min\{ \sqrt{x},\log T\}} \frac{\mathfrak{h}(p^2)}{2p^{1+2it}\log p}\right) + a_K \frac{\log T}{\log x} +O_{K_1,\ldots,K_r,L,\mathbf a}(1).
		\end{align*}
	\end{prop}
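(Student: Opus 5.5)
The plan is to apply Lemma~\ref{conditional-upper-bound} to each field $K_j$ separately, at the shifted point $t_j:=t+b_j$, multiply the resulting inequality by $a_j>0$, and sum over $j$. Since $t\in[T,2T]$ and $|b_j|\le T/2$, we have $t_j\in[T/2,5T/2]$, which is exactly the range allowed in Lemma~\ref{conditional-upper-bound}. Each $K_j$ is an intermediate field of $L$, and GRH is assumed for $\zeta_L(s)$, so for $T\ge T_0(K_j,L)$ the lemma gives, uniformly for $e^2\le x\le T^2$,
\begin{align*}
a_j\log\left|\zeta_{K_j}\left(\tfrac12+it_j\right)\right|
&\le a_j\Re\left(\sum_{p\le x}\frac{\Lambda_{K_j}(p)p^{-ib_j}}{p^{\frac12+\frac1{\log x}+it}\log p}\frac{\log(x/p)}{\log x}+\sum_{p\le\min\{\sqrt x,\log T\}}\frac{\Lambda_{K_j}(p^2)p^{-2ib_j}}{2p^{1+2it}\log p}\right)\\
&\quad+a_j[K_j:\mathbb Q]\frac{\log T}{\log x}+O_{K_j,L}(a_j).
\end{align*}
Here we have written $p^{-i t_j}=p^{-it}p^{-ib_j}$, and similarly for $p^{-2it_j}$.

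Summing over $j$, the prime terms give exactly $2\Re\sum_p \mathfrak h(p)(\cdots)$, because $\sum_j a_j\Lambda_{K_j}(p)p^{-ib_j}=2\mathfrak h(p)$. For the prime-square terms, $\mathfrak h(p^2)$ evaluated at $n=p^2$ involves $(p^2)^{-ib_j}=p^{-2ib_j}$. This matches the factor coming from the shift, so the sum equals $2\Re\sum_p \mathfrak h(p^2)/(2p^{1+2it}\log p)$. The summation ranges $p\le x$ and $p\le\min\{\sqrt x,\log T\}$ are the same for every $j$, because they depend only on $x$ and $T$ and not on $t_j$. It is important that the lemma truncates at $\log T$ with the same $T$ for every $j$. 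This is the case since the lemma's cutoff is stated in terms of the parameter $T$, not $t_j$, and the range $T/2\le t_j\le 5T/2$ is allowed.

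The conductor terms sum to $\left(\sum_j a_j[K_j:\mathbb Q]\right)\log T/\log x$. Since $\log x>0$ and $a_K>\sum_j a_j[K_j:\mathbb Q]$, this is at most $a_K\log T/\log x$, so we may replace it directly. The errors add to $O_{K_1,\ldots,K_r,L,\mathbf a}(1)$, and we take $T_0$ to be the maximum of the individual $T_0(K_j,L)$. The bounds \eqref{eq:h-prime-uniform-bounds} follow immediately from $|\Lambda_{K_i}(n)|\le[K_i:\mathbb Q]\Lambda(n)$ and $|p^{-ib_i}|=1$.

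The only step needing any care is confirming that Lemma~\ref{conditional-upper-bound} is uniform in $t_j$ over the full range $[T/2,5T/2]$, with the truncation depending on $T$ alone. This is exactly how the lemma is stated, so the uniformity in the shifts is automatic and no further obstacle arises.
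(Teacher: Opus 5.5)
Your proposal is correct and matches the paper's proof. Both apply Lemma~\ref{conditional-upper-bound} to each $K_i$ at the shifted point $t+b_i\in[T/2,5T/2]$, multiply by $a_i>0$ and sum, combine the prime and prime-square terms via $\sum_i a_i\Lambda_{K_i}(n)n^{-ib_i}=2\mathfrak h(n)$, and bound the conductor terms by $a_K\log T/\log x$.
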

	\begin{proof}
Apply Lemma~\ref{conditional-upper-bound} to each field $K_i$, using the common Galois extension $L$, at the shifted point $t+b_i$, and then multiply the resulting inequality by $a_i$ and sum over $i$.  Since
\begin{equation*}
        \sum_{i=1}^ra_i\Lambda_{K_i}(n)n^{-ib_i}=2\mathfrak h(n),
\end{equation*}
the prime and prime-square terms combine exactly into the two sums appearing in the statement.  The conductor term is bounded by $a_K\log T/\log x$, because $a_K$ was chosen larger than $\sum_i a_i[K_i:\mathbb Q]$. Therefore,
		\begin{align*}
		&	\sum_{i=1}^r a_i \log|\zeta_{K_i}\left(\tfrac{1}{2}+i(t+b_i)\right)|\le \Re\sum_{p\le  x}	\sum_{i=1}^r  \frac{a_ip^{-ib_i}\Lambda_{K_i}(p)}{p^{\frac{1}{2}+\frac{1}{\log x}+ it}\log p} \frac{\log\left(x/p\right)}{\log x}\\
			&+ \Re\sum_{p \le  \min\left\{\sqrt{x}, \log T\right\}}	\sum_{i=1}^r  \frac{a_i p^{-2ib_i}\Lambda_{K_i}(p^2)}{2p^{1+2it}\log p}
		+\frac{\log T}{\log x}\sum_{i=1}^ra_i[K_i:\mathbb{Q}] +O(1).
		\end{align*}
	Hence the proposition follows.
	\end{proof}
    
	\subsection{A crude upper bound for shifted moments}
In the proof of Theorem \ref{main-theorem}, assuming GRH, we shall need a crude upper bound for shifted moments of $L$-functions on the critical line.
\begin{lem}\label{shifted-moment-trivial-bound}
Let $K_1,\ldots,K_m$ be fixed number fields, let $L/\mathbb Q$ be a finite Galois extension containing them, and let $A_1,\ldots,A_m>0$ be fixed.  Assume GRH for $\zeta_L(s)$.  Uniformly for $|b_i|\le T/2$, there is a constant $C$, depending only on $L$, the fields, and the exponents, such that
		\begin{align*}
			\int_{T}^{2T} \prod_{j=1}^{m}\left| \zeta_{K_j}\left(\frac{1}{2}+i(t+b_j)\right)\right|^{A_j}\,dt\ll T(\log T)^C.
		\end{align*}
	\end{lem}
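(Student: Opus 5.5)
The plan is to reduce to moments of a single Dedekind zeta function and then run Soundararajan's large-values argument, using Lemma~\ref{conditional-upper-bound} as the pointwise majorant.

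\emph{Reduction to one field.} Put $A=\sum_j A_j$. By H\"older's inequality with exponents $A/A_j$, the integral is at most
\begin{equation*}
\prod_{j=1}^m\Bigl(\int_T^{2T}\bigl|\zeta_{K_j}(\tfrac12+i(t+b_j))\bigr|^{A}\,dt\Bigr)^{A_j/A}.
\end{equation*}
Since $|b_j|\le T/2$, the substitution $u=t+b_j$ places $u$ in $[T/2,5T/2]$, and this range is exactly the one permitted in Lemma~\ref{conditional-upper-bound}. It therefore suffices to show that, for a fixed field $K\subseteq L$ and fixed $k>0$,
\begin{equation*}
\int_{T/2}^{5T/2}\bigl|\zeta_K(\tfrac12+iu)\bigr|^{2k}\,du\ll T(\log T)^{C'}.
\end{equation*}
The bound is then uniform in the shifts, because the $b_j$ enter only through the range of $u$.

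\emph{Large values.} I would write the moment as
\begin{equation*}
\int e^{2kV}\,d\mu(V),\qquad \mu(V)=\operatorname{meas}\{u\in[T/2,5T/2]:\log|\zeta_K(\tfrac12+iu)|\ge V\},
\end{equation*}
and bound $\mu(V)$. Lemma~\ref{conditional-upper-bound} with $x=T^2$ gives the pointwise bound $\log|\zeta_K|\ll\log T/\log\log T$, so only $V\le C_0\log T/\log\log T$ matters. For a given $V$, I would apply Lemma~\ref{conditional-upper-bound} with $x=T^{c/V}$, where $c$ is a small constant depending on $[K:\mathbb Q]$ and $k$; the range must be $e^2\le x\le T^2$. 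The conductor term is then $[K:\mathbb Q]V/c$. The prime-square sum over $p\le\log T$ is $\ll\log\log\log T$, which is harmless. Thus, on the set where $\log|\zeta_K|\ge V$, the real part of the prime Dirichlet polynomial
\begin{equation*}
S(u)=\sum_{p\le x}\frac{\Lambda_K(p)}{p^{1/2+1/\log x+iu}\log p}\frac{\log(x/p)}{\log x}
\end{equation*}
is at least roughly $V/2$.

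\emph{Moments of the Dirichlet polynomial.} Next I would bound $\int|S(u)|^{2q}\,du$ using Lemma~\ref{complex-dirichlet-mean} with a single set $\mathcal Q_1$ consisting of all primes up to $x$, $M_1=q$ and no $\mathcal P_r$. This is valid as long as $x^q\le T^{1/10}$, and it gives
\begin{equation*}
\int|S(u)|^{2q}\,du\ll T\,q!\,\bigl(d^2\log\log T+O(1)\bigr)^q,\qquad d=[K:\mathbb Q].
\end{equation*}
Markov's inequality then bounds $\mu(V)$. Choosing $q\approx V^2/\log\log T$ in the Gaussian range $V\le\log\log T$, and $q\approx cV$ for larger $V$, yields $\mu(V)\ll T\exp(-c_1V^2/\log\log T)$ in the first range and $\mu(V)\ll T\exp(-c_2V\log V)$ in the second. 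The choice must respect $x^q\le T^{1/10}$, that is $cq/V\le1/10$; this holds for $q\le V/(10c)$, and with a suitable constant in the Gaussian range because $V^2/\log\log T\le V$ there.

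\emph{Integration and the main obstacle.} Integrating $e^{2kV}$ against these tails gives $T(\log T)^{C'}$. The Gaussian range contributes $\exp(O(k^2\log\log T))$, and the range $V\ge\log\log T$ is dominated by the $e^{-c_2V\log V}$ decay, since $\log V\gg\log\log\log T$ exceeds $2k$ for large $T$. I expect the main obstacle to be bookkeeping rather than anything conceptual: the choice of $x$ and $q$ must keep the conductor contribution $dV/c$ below $V/2$, and keep $x^q\le T^{1/10}$ so that the off-diagonal term $T^{1/10}(\sum|d(p)|)^{2q}$ in Lemma~\ref{complex-dirichlet-mean} remains negligible. Because only some power $C$ of $\log T$ is required, all constants may be chosen generously; I would take $c$ small in terms of $d$ and $k$ first and then fix $q$.
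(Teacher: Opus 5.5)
Your overall route is the paper's: H\"older with exponents $A/A_j$ to reduce to one field, a Soundararajan-type large-values estimate built from Lemma~\ref{conditional-upper-bound} and Lemma~\ref{complex-dirichlet-mean}, and integration of $e^{2kV}$ against the tail. However, several of your concrete choices do not work, and one of them is a genuine gap.

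\textbf{Parameter errors.} With $x=T^2$, the prime sum in Lemma~\ref{conditional-upper-bound} is only trivially $O(T/\log T)$, so it gives no pointwise bound of size $\log T/\log\log T$. You need $x=(\log T)^2$. Then $d\sum_{p\le(\log T)^2}p^{-1/2}$ and the conductor term $d\log T/(2\log\log T)$ are both $\ll\log T/\log\log T$.

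With $x=T^{c/V}$ the conductor term is $dV/c$. To get $\Re S(u)\ge V/2$, the constant $c$ must therefore be \emph{large} ($c\ge 4d$, say; the paper takes $\log x=8d\log T/V$), not small. The support condition $x^q\le T^{1/10}$ then forces $q\le V/(10c)\asymp V/d$, so your $q\approx cV$ must be dropped.

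\textbf{The genuine gap: the tail bound you integrate.} Take $q=V/(10c)$, and bound the mean square of the coefficients of $S$ by $\sigma^2:=d^2\log\log T+O(1)$. Markov then gives
\begin{equation*}
\mu(V)\ll T\left(\frac{4q\sigma^2}{V^2}\right)^{q}
=T\exp\left(-\frac{V}{10c}\log\frac{10cV}{4\sigma^2}\right).
\end{equation*}
This is a saving $\exp(-c_3V\log(V/\log\log T))$, not $\exp(-c_2V\log V)$: the variance caps the logarithmic gain. For $V\asymp\log\log T$ this logarithm is $O(1)$ and may be negative. So your step ``$\log V\gg\log\log\log T$ exceeds $2k$'' is unavailable.

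In fact the tail you state cannot hold. Combined with the trivial bound $\mu(V)\le T$ below $\log\log T$, it would give $\int_T^{2T}|\zeta(\tfrac12+it)|^{2k}\,dt\ll T(\log T)^{2k}$ for every fixed $k$ (take $K=\mathbb Q$). That contradicts the known lower bound $T(\log T)^{k^2}$ once $k>2$.

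\textbf{The repair} is what the paper does:
\begin{itemize}
\item Put the threshold at $C_2\log\log T$, with $C_2=C_2(d,k)$ large enough that $\frac{1}{10c}\log\frac{10cC_2}{4d^2}\ge 2k+2$.
\item Bound the range $V\le C_2\log\log T$ trivially by $Te^{2kC_2\log\log T}=T(\log T)^{2kC_2}$. Your Gaussian estimate is not needed for this lemma.
\item Use the corrected tail above the threshold, together with $V\le D\log T/\log\log T$, to absorb the off-diagonal terms.
\end{itemize}
The resulting exponent is exponentially large in $dk$. This is harmless, since the lemma only claims some power of $\log T$.
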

	\begin{proof}
It suffices to prove, for each fixed intermediate field $K$,  $A>0$ and $U\ge 3$ that
\begin{equation}\label{eq:one-field-crude-moment}
\int_U^{2U}\left|\zeta_K\left(\frac12+iu\right)\right|^A\,du
        \ll_{L,K,A} U(\log U)^{C(L,K,A)}.
\end{equation}
Indeed, GRH for $\zeta_L$ implies GRH for $\zeta_K$ by the Aramata--Brauer theorem.  Let $d=[K:\mathbb Q]$ and, for $V\ge3$, define
\begin{equation*}
        \mathcal E(V)=
        \left\{u\in[U,2U]:
        \log\left|\zeta_K\left(\frac12+iu\right)\right|>V\right\}.
\end{equation*}
We first establish the two estimates needed below.  There exist positive constants $C_1,C_2,C_3,D>0$, depending only on $K$ and $A$, such that, with
\begin{equation*}
        V_0=D\frac{\log U}{\log\log U},
\end{equation*}
one has
\begin{align}
\label{eq:dedekind-large-values-small}
        \operatorname{meas}(\mathcal E(V))
        &\ll
        U\exp\left(-\frac{V^2}{C_1\log\log U}\right)
         &\text{for }3\le  V\le  C_2\log\log U,\\
\label{eq:dedekind-large-values-large}
        \operatorname{meas}(\mathcal E(V))
        &\ll
        U\exp\left(-C_3V\log\frac{V}{\log\log U}\right)
        &\text{for }C_2\log\log U\le V\le V_0,
\end{align}
while $\mathcal E(V)=\emptyset$ for $V>V_0$.  To prove these assertions, let $x=e^Y$, where $2\le Y\le2\log U$.  By Lemma~\ref{conditional-upper-bound} and the bound $|\Lambda_K(n)|\le d\Lambda(n)$,
\begin{equation*}
        \sum_{p\le  \log U}\frac{|\Lambda_K(p^2)|}{p\log p}
        \ll_K \sum_{p\le\log U}\frac1p
        \ll\log\log\log U
\end{equation*}
gives, uniformly for $u\in[U,2U]$,
\begin{equation}\label{eq:sound-prime-majorant}
        \log\left|\zeta_K\left(\frac12+iu\right)\right|
        \le
        \Re P_x(u)+\frac{d\log U}{Y}+O_K(\log\log\log U),
\end{equation}
where
\begin{equation*}
        P_x(u)=
        \sum_{p\le  x}
        \frac{\Lambda_K(p)}{p^{1/2+1/Y+iu}\log p}
        \frac{\log(x/p)}{Y}.
\end{equation*}
If $r\ge1$ and $x^r\le U^{1/10}$, Lemma~\ref{complex-dirichlet-mean} gives
\begin{equation}\label{eq:Px-moment}
        \int_U^{2U}|P_x(u)|^{2r}\,du
        \ll
        U r!\left(C_K\log\log x\right)^r
        +U^{1/10}(C_Kx^{1/2})^{2r}
        \ll
        U r!\left(C_K\log\log x\right)^r+U^{1/5}C_K^{2r}.
\end{equation}
After division by $(V/2)^{2r}$, the off-diagonal term is at most
\begin{equation*}
        U^{1/5}\left(\frac{2C_K}{V}\right)^{2r}.
\end{equation*}
We now choose $Y=8d\log U/V$.  When $C_K\log\log\log U\le V\le C_2\log\log U$, membership in $\mathcal E(V)$ implies $\Re P_x(u)>V/2$.  With
\begin{equation*}
        r=\left\lfloor\frac{V^2}{C_4\log\log U}\right\rfloor+1,
\end{equation*}
and $C_4$ sufficiently large, we have $x^r\le U^{1/10}$, and Chebyshev's inequality gives
\begin{equation*}
        \operatorname{meas}(\mathcal E(V))
        \ll
        U\left(\frac{C_Kr\log\log U}{V^2}\right)^r
        \ll
        U\exp\left(-\frac{V^2}{C_1\log\log U}\right).
\end{equation*}
The range $3\le V<C_K\log\log\log U$ follows from $\operatorname{meas}(\mathcal E(V))\le U$ after enlarging $C_1$.  This proves \eqref{eq:dedekind-large-values-small}.  Lemma~\ref{conditional-upper-bound} with $x=(\log U)^2$ also gives
\begin{align*}
\log\left|\zeta_K\left(\frac12+iu\right)\right|
&\le d\sum_{p\le(\log U)^2}\frac1{\sqrt p}
+\frac{d\log U}{2\log\log U}+O_K(\log\log\log U)\\
&\le D\frac{\log U}{\log\log U}
\end{align*}
for a fixed $D=D(K)$; hence $\mathcal E(V)=\emptyset$ for $V>V_0$.  If $C_2\log\log U<V\le V_0$, keep the previous choice of $Y$ and take
\begin{equation*}
        r=\left\lfloor\frac{V}{C_5d}\right\rfloor,
\end{equation*}
where $C_5\ge80$ is fixed.  Then $Y\ge(8d/D)\log\log U$ and
\begin{equation*}
        rY\le\frac{8}{C_5}\log U,
\end{equation*}
so $x^r\le U^{1/10}$.  The diagonal term in \eqref{eq:Px-moment} gives
\begin{equation*}
        \operatorname{meas}(\mathcal E(V))
        \ll
        U\left(\frac{C_Kr\log\log U}{V^2}\right)^r
        \ll
        U\exp\left(-C_3V\log\frac{V}{\log\log U}\right),
\end{equation*}
after $C_2$ is enlarged.  In both ranges the off-diagonal contribution is $O(U^{1/5})$.  Taking $C_3D\le1/2$ makes the right side of \eqref{eq:dedekind-large-values-large} at least $U^{1/2}$, so this error is absorbed and \eqref{eq:dedekind-large-values-large} follows.

We now integrate the distribution estimates.  With $W(u)=\log|\zeta_K(1/2+iu)|$,
\begin{equation*}
        \int_U^{2U}e^{AW(u)}\,du
        \le
        Ue^{AC_2\log\log U}
        +A\int_{C_2\log\log U}^{\infty}
        e^{AV}\operatorname{meas}(\mathcal E(V))\,dV.
\end{equation*}
Choose $C_2$ so that $C_3\log C_2\ge A+2$.  The integral is then $O(U)$, proving \eqref{eq:one-field-crude-moment}.  Finally, let $A=A_1+\cdots+A_m$.  Hölder's inequality gives
\begin{equation*}
\int_T^{2T}\prod_{j=1}^m
\left|\zeta_{K_j}\left(\frac12+i(t+b_j)\right)\right|^{A_j}\,dt
\le
\prod_{j=1}^m
\left(
\int_{T+b_j}^{2T+b_j}
\left|\zeta_{K_j}\left(\frac12+iu\right)\right|^{A}\,du
\right)^{A_j/A}.
\end{equation*}
Each shifted interval $[T+b_j,2T+b_j]$ can be covered by $O(1)$ dyadic intervals whose endpoints are comparable to $T$.  Applying \eqref{eq:one-field-crude-moment} on each such interval completes the proof.
\end{proof}

\subsection{Frobenius coefficients and Euler products}
We now establish Chebotarev-type identities that record the cancellations arising from the Galois-theoretic structure of the fields. These identities are fundamental in handling the non-Galois case and allow us to relate prime sums to logarithms of suitable Euler products.

\begin{lem}\label{euler-product-comparison}
Let $K_1,\ldots,K_r$, $L$, and the characters $\chi_i$ be those defined before Theorem~\ref{main-theorem}, let $S$ be the finite set of rational primes ramified in $L$, and let $1\le i,j\le r$.  Uniformly for $x\geqslant3$ and real $u$,
\begin{equation*}
        \sum_{\substack{p\le x\\p\notin S}}
        \frac{\chi_i(\Frob_p)\chi_j(\Frob_p)\cos(u\log p)}p
        =
        \log\left|\calZ_{ij}\left(1+\frac1{\log x}+iu\right)\right|
        +O_{K_1,\ldots,K_r,L}(1).
\end{equation*}
Here the logarithm of each Dedekind zeta factor in $\calZ_{ij}$ is the branch obtained from the absolutely convergent Euler product in $\Re s>1$.  Ramified primes in the common Galois closure, and all prime-power terms $p^r$ with $r\ge2$, contribute $O_{K_1,\ldots,K_r,L}(1)$, uniformly in $x$ and $u$.
\end{lem}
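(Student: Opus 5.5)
The argument has three steps: a character identity, a Frobenius computation, and a comparison of logarithms. All of it takes place at $\sigma=1+1/\log x$.

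\emph{Character identity.} The product $\chi_i\chi_j$ is the permutation character of $G$ acting diagonally on $G/H_i\times G/H_j$. By Mackey's decomposition, the orbits of this action correspond to the double cosets $H_igH_j$. The orbit of $(H_i,gH_j)$ has stabiliser $H_i\cap gH_jg^{-1}$, so
\begin{equation*}
\chi_i\chi_j=\sum_{H_igH_j\in H_i\backslash G/H_j}\operatorname{Ind}_{H_i\cap gH_jg^{-1}}^G\mathbf 1 .
\end{equation*}

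\emph{Frobenius computation.} The proof of Lemma~\ref{artin-square-coefficients} shows that $\Lambda_{L^J}(p)/\log p=\operatorname{Ind}_J^G\mathbf 1_J(\Frob_p)$ for every unramified $p$ and every $J\le G$. Combining this with the identity above gives, for $p\notin S$,
\begin{equation*}
\chi_i(\Frob_p)\chi_j(\Frob_p)=\sum_{F}\frac{\Lambda_F(p)}{\log p},
\end{equation*}
where $F$ runs over the fixed fields $L^{H_i\cap gH_jg^{-1}}$ appearing in $\calZ_{ij}$.

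\emph{Comparison with the Euler product.} For each such $F$ and $\Re s>1$, the Euler product gives
\begin{equation*}
\log\zeta_F(s)=\sum_{n\ge2}\frac{\Lambda_F(n)}{n^s\log n}.
\end{equation*}
Take $s=1+1/\log x+iu$ and pass to real parts. The terms $p^r$ with $r\ge2$ contribute $O_F(1)$ uniformly, because $|\Lambda_F(n)|\le[F:\mathbb Q]\Lambda(n)$ and $\sum_p\sum_{r\ge2}p^{-r}<\infty$. The finitely many primes in $S$ also contribute $O(1)$. What remains is
\begin{equation*}
\log|\zeta_F(s)|=\sum_{p\notin S}\frac{\Lambda_F(p)\cos(u\log p)}{p^{1+1/\log x}\log p}+O(1).
\end{equation*}
The sum over $p\notin S$ now has to be compared with the truncated sum over $p\le x$ in the statement. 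This comparison is the main step.

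First I bound the tail. Since $|\Lambda_F(p)|/\log p\le[F:\mathbb Q]$, the tail $p>x$ is at most
\begin{equation*}
\ll\sum_{p>x}p^{-1-1/\log x}\ll1.
\end{equation*}
This follows from partial summation with Mertens/Chebyshev: substitute $p=x^{v}$ to get roughly $\int_1^\infty e^{-v}\,dv/v$. Next I remove the smoothing weight on $p\le x$. There the weight changes by $1-p^{-1/\log x}\ll\log p/\log x$, so the error is at most
\begin{equation*}
\ll\frac1{\log x}\sum_{p\le x}\frac{\log p}p\ll1.
\end{equation*}
Both bounds are uniform in $u$ because they only use absolute values.

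Summing over the double cosets finishes the proof: by multiplicativity, $\log|\calZ_{ij}|=\sum_F\log|\zeta_F|$.

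I expect the only delicate point to be the branch bookkeeping. I would choose each $\log\zeta_F$ as the Euler-product branch, so that its real part is $\log|\zeta_F|$ with no ambiguity. With that choice the constants depend only on the finitely many fields $F$, hence only on $K_1,\ldots,K_r,L$.
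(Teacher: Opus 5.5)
Your proposal is correct and follows essentially the same route as the paper. The paper likewise identifies $\chi_i\chi_j$ with the permutation character of the diagonal action on $G/H_i\times G/H_j$, whose orbits are the double cosets with stabilisers $H_i\cap gH_jg^{-1}$, and reads off the unramified prime coefficients of $\log\calZ_{ij}$. It then discards ramified primes and prime powers, and compares the sum at $1+1/\log x$ with the truncated sum via Mertens, using the tail $p>x$ and the weight change $1-p^{-1/\log x}\ll\log p/\log x$. The only cosmetic difference is that you go through the per-field identity $\Lambda_{L^J}(p)/\log p=\operatorname{Ind}_J^G\mathbf 1_J(\Frob_p)$ from Lemma~\ref{artin-square-coefficients}, whereas the paper counts fixed points of $\Frob_p$ on $G/H_i\times G/H_j$ directly.
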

\begin{proof}
For $\Re s>1$, we use the logarithm determined by the absolutely convergent Euler product, so that $\log\zeta_F(s)$ is real for real $s>1$.  If $p\notin S$, then the primes of $K_i$ above $p$ are the orbits of $\langle\Frob_p\rangle$ on $G/H_i$, and an orbit has residue degree equal to its length.  Thus the coefficient of $p^{-s}$ in $\log\zeta_{K_i}(s)$ is the number of one-element orbits, namely $\chi_i(\Frob_p)$.

The product action of $G$ on $G/H_i\times G/H_j$ has character $\chi_i\chi_j$.  Its orbits are indexed by the double cosets $H_i\backslash G/H_j$, and the stabiliser of $(H_i,gH_j)$ is $H_i\cap gH_jg^{-1}$.  Hence the coefficient of $p^{-s}$ in the logarithmic Euler product for
\begin{equation*}
        \calZ_{ij}(s)=
        \prod_{H_i gH_j\in H_i\backslash G/H_j}
        \zeta_{L^{H_i\cap gH_jg^{-1}}}(s)
\end{equation*}
is the number of fixed points of $\Frob_p$ on $G/H_i\times G/H_j$, which is $\chi_i(\Frob_p)\chi_j(\Frob_p)$. Consequently, for $\Re s>1$,
\begin{equation*}
        \log\calZ_{ij}(s)
        =
        \sum_{p\notin S}\frac{\chi_i(\Frob_p)\chi_j(\Frob_p)}{p^s}
        +O_{K_1,\ldots,K_r,L}(1).
\end{equation*}
Here the $O(1)$ term is uniform for $\Re s\ge1$.  Indeed, the contribution from the ramified primes is a finite product depending only on $L$, and the terms $p^{-ms}$, $m\ge2$, are bounded in absolute value by
\begin{equation*}
        C\sum_p\sum_{m\ge2}p^{-m\Re s}\ll 1,
\end{equation*}
with $C$ depending only on the fixed fields.

To compare the Euler product with the truncated prime sum, set $s=1+\delta+iu$, where $\delta=1/\log x$.  The coefficients $\chi_i(\Frob_p)\chi_j(\Frob_p)$ are bounded.  Mertens' estimate and partial summation give
\begin{equation*}
        \sum_{p\le  x}\frac{1-p^{-\delta}}p+
        \sum_{p>x}\frac1{p^{1+\delta}}
        \ll 1,
\end{equation*}
from which it follows that
\begin{equation*}
        \sum_{p\notin S}\frac{\chi_i(\Frob_p)\chi_j(\Frob_p)}{p^{1+\delta+iu}}
        =
        \sum_{\substack{p\le x\\p\notin S}}
        \frac{\chi_i(\Frob_p)\chi_j(\Frob_p)}{p^{1+iu}}
        +O_{K_1,\ldots,K_r,L}(1),
\end{equation*}
uniformly in real $u$.  Taking real parts gives the lemma.
\end{proof}

For the proof of the simultaneous theorem, we use the following Frobenius-orbit description, which amounts to a local calculation at each unramified prime.
\begin{prop}\label{frobenius-orbit-product}
Uniformly for $x\geq3$ and real shifts $b_1,\ldots,b_r$,
\begin{equation*}
        \sum_{p\le  x}\frac{|\mathfrak h(p)|^2}{p\log^2p}
        =
        \log B(x,\mathbf b,\mathbf a)+O(1).
\end{equation*}
In particular, if $3\le  x\le  T$, then
\begin{equation*}
        \exp\left(\sum_{p\le  x}\frac{|\mathfrak h(p)|^2}{p\log^2p}\right)
        \ll_{\mathbf a,\mathbf K} B(T,\mathbf b,\mathbf a).
\end{equation*}
\end{prop}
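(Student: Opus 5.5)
Expand $|\mathfrak h(p)|^2$ prime by prime and then sum using Lemma~\ref{euler-product-comparison}. Split off the finite set $S$ of primes ramified in $L$. By \eqref{eq:h-prime-uniform-bounds}, each $p\in S$ contributes $O(1)$ to the left-hand side, uniformly in the shifts, so it suffices to treat $p\notin S$. For such $p$, recall from the proof of Lemma~\ref{euler-product-comparison} that $\Lambda_{K_i}(p)=\chi_i(\Frob_p)\log p$. Hence
\begin{equation*}
\frac{|\mathfrak h(p)|^2}{\log^2p}
=\frac14\sum_{i,j=1}^r a_ia_j\,\chi_i(\Frob_p)\chi_j(\Frob_p)\,p^{-i(b_i-b_j)}.
\end{equation*}
The characters $\chi_i$ are real, so the terms for $(i,j)$ and $(j,i)$ are complex conjugates. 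The total is therefore real and equals
\begin{equation*}
\frac14\sum_{i,j}a_ia_j\chi_i(\Frob_p)\chi_j(\Frob_p)\cos((b_i-b_j)\log p).
\end{equation*}

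Next divide by $p$, sum over $p\le x$ with $p\notin S$, and apply Lemma~\ref{euler-product-comparison} for each ordered pair $(i,j)$ with $u=b_i-b_j$. This gives
\begin{equation*}
\sum_{p\le x}\frac{|\mathfrak h(p)|^2}{p\log^2p}=\frac14\sum_{i,j}a_ia_j\log\left|\calZ_{ij}\left(1+\frac1{\log x}+i(b_i-b_j)\right)\right|+O(1).
\end{equation*}
The right-hand side is exactly $\log B(x,\mathbf b,\mathbf a)+O(1)$. The error term is uniform in $x\ge3$ and in the shifts, because the lemma is uniform in $u$ and there are finitely many pairs. The implied constant depends on $\mathbf a$ and the fields, which I read as the intended meaning of $O(1)$.

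For the ``in particular'' statement, compare $B(x,\cdot)$ with $B(T,\cdot)$ for $x\le T$. Since the summand is nonnegative, the sum over $p\le x$ is at most the sum over $p\le T$. Applying the first statement with $T$ in place of $x$ bounds this by $\log B(T,\mathbf b,\mathbf a)+O(1)$, and exponentiating gives the claim. This monotonicity step is why I would not try to compare the two zeta values directly.

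The only delicate point is the identification $\Lambda_{K_i}(p)/\log p=\chi_i(\Frob_p)$ for unramified $p$, together with the double-coset description of the coefficients of $\log\calZ_{ij}$. Both are already established in Lemma~\ref{euler-product-comparison}, so I expect no real obstacle.
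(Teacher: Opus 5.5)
Your proposal is correct and matches the paper's proof essentially step for step. Like the paper, you discard the ramified primes via the coefficient bound, expand $|\mathfrak h(p)|^2$ using $\Lambda_{K_i}(p)=\chi_i(\Frob_p)\log p$, and apply Lemma~\ref{euler-product-comparison} to each ordered pair with $u=b_i-b_j$; you then obtain the second assertion from positivity of the summands by extending the sum to $p\le T$ and exponentiating.
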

\begin{proof}
Let $S$ be the finite set of rational primes ramified in $L$.  Since $|\mathfrak h(p)|\ll_{\mathbf a,\mathbf K}\log p$, the primes in $S$ contribute $O(1)$ to the sum.  For $p\notin S$, the degree-one coefficient identity gives
\begin{equation*}
        \frac{\Lambda_{K_i}(p)}{\log p}=\chi_i(\Frob_p).
\end{equation*}
Thus
\begin{equation*}
        \frac{|\mathfrak h(p)|^2}{\log^2p}
        =
        \frac14\sum_{i,j=1}^r
        a_i a_j\chi_i(\Frob_p)\chi_j(\Frob_p)p^{-i(b_i-b_j)}.
\end{equation*}
The ordered sum is real, because the terms with indices $(i,j)$ and $(j,i)$ are complex conjugates and the diagonal terms are real.  Applying Lemma~\ref{euler-product-comparison} with $u=b_i-b_j$, multiplying by $a_i a_j/4$, and summing over $i,j$, gives
\begin{equation*}
        \sum_{p\le  x}\frac{|\mathfrak h(p)|^2}{p\log^2p}
        =
        \sum_{i,j=1}^r\frac{a_i a_j}{4}
        \log\left|\calZ_{ij}\left(1+\frac1{\log x}+i(b_i-b_j)\right)\right|+O(1).
\end{equation*}
This is exactly $\log B(x,\mathbf b,\mathbf a)+O(1)$.  The final assertion follows from positivity of the left hand side: if $3\le  x\le  T$, then
\begin{equation*}
        \sum_{p\le  x}\frac{|\mathfrak h(p)|^2}{p\log^2p}
        \le
        \sum_{p\le  T}\frac{|\mathfrak h(p)|^2}{p\log^2p}
        =
        \log B(T,\mathbf b,\mathbf a)+O(1).
\end{equation*}
Exponentiating completes the proof.
\end{proof}

\begin{lem}\label{B-poly-bounds}
There is a constant $C_B\ge1$, depending only on the fields and on $\mathbf a$, such that
\begin{equation*}
        C_B^{-1}(\log T)^{-C_B}
        \le B(T,\mathbf b,\mathbf a)
        \le C_B(\log T)^{C_B}
\end{equation*}
uniformly for $|b_i|\le  T/2$.
\end{lem}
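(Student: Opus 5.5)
The natural route is to pass through Proposition~\ref{frobenius-orbit-product}, which gives
\begin{equation*}
\log B(T,\mathbf b,\mathbf a)=\sum_{p\le T}\frac{|\mathfrak h(p)|^2}{p\log^2 p}+O(1),
\end{equation*}
and then bound this prime sum above and below by $O(\log\log T)$ up to additive constants. Exponentiating such bounds gives the claim with a suitable $C_B$.

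The upper bound is immediate. By \eqref{eq:h-prime-uniform-bounds}, $|\mathfrak h(p)|^2/\log^2p\le \frac14\bigl(\sum_i a_i[K_i:\mathbb Q]\bigr)^2$ uniformly in the shifts. Mertens' theorem then gives
\begin{equation*}
\sum_{p\le T}\frac{|\mathfrak h(p)|^2}{p\log^2p}\le C\log\log T+O(1),
\end{equation*}
so $B\le C_B(\log T)^{C_B}$.

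The lower bound needs a little more care, because the prime sum is nonnegative but the identity in Proposition~\ref{frobenius-orbit-product} carries an $O(1)$ of unspecified sign. Nonnegativity of the sum gives $\log B(T,\mathbf b,\mathbf a)\ge -O(1)$, hence $B\gg 1$, which is stronger than the required lower bound. The point to verify is that the $O(1)$ in Proposition~\ref{frobenius-orbit-product} is uniform in the shifts $\mathbf b$, with dependence only on the fields and on $\mathbf a$. This follows from Lemma~\ref{euler-product-comparison}, which is uniform in real $u$, together with the finitely many pairs $(i,j)$ and the weights $a_ia_j/4$. I would also confirm that the shifts $b_i-b_j$ enter only through $u$, so the range $|b_i|\le T/2$ plays no role.

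The main point to check is this uniformity in $u$ of Lemma~\ref{euler-product-comparison}. Its comparison error, consisting of $\sum_{p\le x}(1-p^{-\delta})/p$, the tail $\sum_{p>x}p^{-1-\delta}$, and the prime-power terms, is bounded in absolute value independently of $u$. So the argument goes through as described.
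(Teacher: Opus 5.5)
Your argument is correct, but it takes a different route from the paper's.

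\textbf{The paper's route.} The paper bounds each Dedekind zeta factor of $B(T,\mathbf b,\mathbf a)$ separately from its Euler product. At $s=\sigma+iu$ with $\sigma=1+1/\log T$ one has
$|\zeta_F(s)|\le\zeta_F(\sigma)$ and $|\zeta_F(s)|^{-1}\le\prod_{\mathfrak p}(1+N\mathfrak p^{-\sigma})\le\zeta_F(\sigma)$, and also $\zeta_F(\sigma)\ll_F\log T$. So every factor lies between fixed negative and positive powers of $\log T$. With finitely many factors and fixed exponents $a_ia_j/4$, the lemma follows. That proof is entirely elementary and does not use the Frobenius or double-coset machinery.

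\textbf{Your route.} You instead apply Proposition~\ref{frobenius-orbit-product} with $x=T$. This is legitimate: that proposition is proved (via Lemma~\ref{euler-product-comparison}) uniformly in all real shifts and does not use the present lemma, so there is no circularity. The uniformity you say you would check is in fact already part of its statement.

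\textbf{What your route gains.} Since $\log B$ equals, up to $O(1)$, the nonnegative quantity $\sum_{p\le T}|\mathfrak h(p)|^2/(p\log^2p)$, you get the sharper lower bound $B(T,\mathbf b,\mathbf a)\gg1$. The factor-by-factor argument gives only $B\gg(\log T)^{-C}$. Individual off-diagonal factors $\calZ_{ij}(1+1/\log T+i(b_i-b_j))$ can be small, and bounding them one at a time cannot exploit the fact that the whole weighted product is the exponential of a variance.

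For how the lemma is used in the paper (absorbing $T^{3/4}$-type and $e^{-c(\log\log T)^2}$-type errors), either version suffices. Neither argument uses the restriction $|b_i|\le T/2$.
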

\begin{proof}
Each factor in $B$ is a Dedekind zeta function of a fixed intermediate field, evaluated at $s=1+1/\log T+i u$.  Since $\zeta_F(s)$ has an Euler product with positive local degrees,
\begin{equation*}
        |\zeta_F(s)|\le  \zeta_F(1+1/\log T)\ll_F \log T
\end{equation*}
and, from the reciprocal Euler product,
\begin{equation*}
        |\zeta_F(s)|^{-1}\le  \zeta_F(1+1/\log T)\ll_F\log T.
\end{equation*}
Thus every zeta factor occurring in $B$ lies between two fixed powers of $\log T$.  There are only finitely many such factors and the exponents $a_i a_j/4$ are fixed, so the claimed bounds follow.
\end{proof}

		\section{Setup and proof of Theorem~\ref{main-theorem}}\label{sec:parameter-accounting}
We now turn to the proof of Theorem \ref{main-theorem}. The proof is based on Harper's method \cite{Harper}, together with some modifications needed for our setting.
	We define an increasing geometric sequence $\left(\alpha_i\right)_{i\geq0}$ of real numbers by $\alpha_0:= 0$ and for all $i\geqslant 1$, \begin{equation*}
		\alpha_i:= \frac{20^{i-1}}{(\log \log T)^2}.
	\end{equation*}
For the compact set $\mathcal A$ in Theorem~\ref{main-theorem}, set
    \begin{equation*}
        a_K=50+\max_{\mathbf a\in\mathcal A}
        \sum_{q=1}^ra_q[K_q:\mathbb Q].
    \end{equation*}
Then \eqref{eq:h-prime-uniform-bounds}, the truncations, the exceptional-set
estimates, Lemma~\ref{shifted-moment-trivial-bound}, and the Euler-product
comparisons are uniform on $\mathcal A$.
We enlarge $C_B$ in Lemma~\ref{B-poly-bounds}, if necessary, so that its
bounds also hold uniformly for $\mathbf a\in\mathcal A$.
Let
    \begin{equation*}
        R_0=\frac{1}{1-20^{-1/4}},
        \qquad
        C_0=400a_K^2,
        \qquad
        C=4C_0=1600a_K^2.
    \end{equation*}
Fix a sufficiently large constant $a'\ge\max\{a_K^2,1\}$ satisfying
    \begin{align}
200R_0\sqrt{a'}\left(20e^{-1000a'}\right)^{1/4}
        \le\frac1{100}, &\qquad   20e^{-1000a'}\le\frac{1}{200a_K^2},\nonumber\\
\sqrt{a'}\left(20e^{-1000a'}\right)^{-3/4}\ge2, \qquad  & \frac{1000a'-\log20}{20C}\ge a_K+1.
  \label{eq:fixed-parameter-conditions}
    \end{align}
    Let $\theta_i:=\sqrt{a'}\alpha_i^{-3/4}$.  After increasing the lower
    bound on $T$, assume $\alpha_1\le e^{-1000a'}$ and define
    \begin{equation*}
    \mathcal J:=1+\max\{i\ge1:\alpha_i\le e^{-1000a'}\}.
    \end{equation*}
    Then
    \begin{equation}\label{eq:terminal-alpha-block}
    \alpha_{\mathcal J-1}\le e^{-1000a'}
    <\alpha_{\mathcal J}=20\alpha_{\mathcal J-1}
    \le20e^{-1000a'}.
    \end{equation}
    In particular,
    \begin{equation*}
\theta_i\ge\theta_{\mathcal J}\ge
            \sqrt{a'}\left(20e^{-1000a'}\right)^{-3/4}>1.
    \end{equation*}
    Consequently, for $1\le i\le\mathcal J$,
    \begin{equation}\label{eq:all-truncation-lengths}
            \lfloor100\theta_i\rfloor\ge10\theta_i,
            \qquad
            \lfloor200\theta_i\rfloor\ge20\theta_i.
    \end{equation}
    Moreover $\theta_i=20^{3/4}\theta_{i+1}$.  It follows that
    \begin{equation}\label{eq:truncation-errors-summable}
    \sum_{i=1}^{\mathcal J}e^{-9\theta_i}
    \le
    \sum_{k\ge0}\exp\left(-9\sqrt{a'}
    \left(20e^{-1000a'}\right)^{-3/4}20^{3k/4}\right)
    \ll_{a'}1.
    \end{equation}
    Since $C$ dominates the absolute constant in
    Lemma~\ref{truncate-dirichlet-poly}, multiplication over any first
    $j\le\mathcal J$ blocks contributes at most
    \begin{equation}\label{eq:truncation-error-product}
    \prod_{i=1}^{j}\left(1+Ce^{-9\theta_i}\right)
    \le\exp\left(C\sum_{i=1}^{j}e^{-9\theta_i}\right)
    \ll_{a'}1.
    \end{equation}
    For doubled block polynomials the analogous factor is
    $\exp(O(\sum_i e^{-18\theta_i}))\ll_{a'}1$.  We also use
    \begin{equation}\label{eq:geometric-support-sum}
        \sum_{i=1}^j\alpha_i\theta_i
        =\sqrt{a'}\sum_{i=1}^j\alpha_i^{1/4}
        \le R_0\sqrt{a'}\alpha_j^{1/4}
        \le R_0\sqrt{a'}\alpha_{\mathcal J}^{1/4}.
    \end{equation}
By enlarging the lower bound on $T$ if necessary, we may assume that, uniformly for
    $j\le\mathcal J$,
    \begin{align}
     &\sum_{i=1}^j\alpha_i\lfloor100\theta_i\rfloor
            \le\frac1{200},\qquad
        \sum_{i=1}^j\alpha_i\lfloor200\theta_i\rfloor
            \le\frac1{100},\nonumber\\
       & \frac{(2m+2)\log2}{\log T}\lfloor2^{3m/4}\rfloor
            \le\frac1{100}
            \qquad \text{for }0\le m\le\frac{\log\log T}{\log2},\nonumber\\
        &\left(\prod_{i=1}^j
        \sum_{0\le n\le\lfloor200\theta_i\rfloor}
        \frac1{n!}\left(2a_K\sum_{p\le T^{\alpha_i}}p^{-1/2}\right)^n
        \right)^2\le T^{1/50}.
        \label{eq:numerical-support-bounds}
    \end{align}
    The first two estimates follow from \eqref{eq:geometric-support-sum}.
    The third follows on taking logarithms.  For the last estimate,
    $\sum_{p\le y}p^{-1/2}\ll y^{1/2}/\log y$ gives, after increasing the
    lower bound on $T$,
    \begin{align*}
    &\left(\prod_{i=1}^j
    \sum_{0\le n\le\lfloor200\theta_i\rfloor}
    \frac1{n!}\left(2a_K\sum_{p\le T^{\alpha_i}}p^{-1/2}\right)^n
    \right)^2\\
    &\qquad\le
    \left(\prod_{i=1}^j(\lfloor200\theta_i\rfloor+1)\right)^2
    T^{\sum_{i=1}^j\alpha_i\lfloor200\theta_i\rfloor}
    \le T^{1/50},
    \end{align*}
    because the logarithm of the squared product of the truncation lengths
    is $O_{a'}((\log\log\log T)^2)\le(\log T)/100$.  We shall also assume
    \begin{equation}\label{eq:square-root-range}
            \sqrt{T^{\alpha_j}}\ge\sqrt{T^{\alpha_1}}\ge\log T.
    \end{equation}
    Finally, let $N=\lfloor2^{3m/4}\rfloor$. We require that
    \begin{align}
        a_K^{2N}2^{mN/5}\exp\left(10a_K2^{m/2}\right)
        &\le T^{1/100}
        \qquad \text{for }0\le m\le\frac{2\log\log\log T}{\log2},\nonumber\\
        a_K^{2N}2^{mN/5}
        &\le T^{1/10} \qquad \text{for }
        0\le m\le\frac{\log\log T}{\log2}.
        \label{eq:numerical-square-growth}
    \end{align}
   The first inequality follows from
   \begin{equation*}
   O_{a_K}((\log\log T)^{3/2}\log\log\log T)
   \le\frac{\log T}{100},
   \end{equation*}
   and the second follows from
   \begin{equation*}
   O_{a_K}((\log T)^{3/4}\log\log T)
   \le\frac{\log T}{10}.
   \end{equation*}
   Finally, increase the lower bound on $T$ so that
   \begin{equation}\label{eq:explicit-error-absorption}
   \frac4C(\log\log T)^2\log\log\log T
   +C_B\log\log T+\log(C_B\mathcal J)
   \le\frac14\log T.
   \end{equation}
    For $2\le n\le x$, we set \begin{equation*}
		\Theta_x(n):= \frac{1}{n^{1/\log x}\log n}\frac{\log(x/n)}{\log x}.
	\end{equation*}
	 Recall from Proposition~\ref{main-prop} and \eqref{eq:def-h} the definition of $\mathfrak h$. For $1\le  i\le  j\le  \mathcal{J}$, let
	\begin{equation}\label{def-pij}
		\mathcal{P}_{i,j}(t):= \sum_{T^{\alpha_{i-1}}< p \le  T^{\alpha_i}} \frac{\Theta_{T^{\alpha_j}}(p)}{p^{1/2+it}}\mathfrak{h}(p).
	\end{equation}
	We cover $[T,2T]$ by the sets $\mathcal{T},\mathcal{S}(0),\ldots,
    \mathcal{S}(\mathcal{J}-1)$, where
	\begin{equation*}
		\mathcal{T}= \mathcal{T}_{\mathbf{a},\mathbf K, T}:= \left\{ t\in [T,2T]: \left| \mathcal{P}_{i,\mathcal{J}}(t)\right|\le  \theta_i, \ \forall\, 1\le  i \le  \mathcal{J}\right\},
	\end{equation*}
	\begin{equation*}
		\mathcal{S}(0)= 	\mathcal{S}_{\mathbf{a},\mathbf K,T}(0):=\left\{ t\in [T,2T]: \left| \mathcal{P}_{1, \ell}(t)\right|> \theta_1 \ \text{for some}\ 1\le  \ell \le  \mathcal{J}\right\},
	\end{equation*}
	and
	for $1\le  j \le  \mathcal{J}-1$,
	\begin{align*}
		\mathcal{S}(j)=\mathcal{S}_{\mathbf{a},\mathbf K,T}(j):= \left\{ t\in[T,2T]: \begin{array}{l}
				\left| \mathcal{P}_{i,\ell}(t)\right|\le  \theta_i \ \forall\,1\le  i\le  j, \forall \, i\le \ell\le  \mathcal{J}\\
				\text{but}\ 	\left| \mathcal{P}_{j+1,\ell}(t)\right|> \theta_{j+1} \ \text{for some} \ j+1\le  \ell\le  \mathcal{J}
		\end{array}\right\}.
	\end{align*}
	Observe that 
	\begin{equation}\label{integral-splits}
		\left[T, 2T\right] =\bigcup_{j=0}^{\mathcal{J}-1}\mathcal{S}(j) \cup \mathcal{T}.
	\end{equation}
	Indeed, if an admissible truncation fails, choose the least block index at
	which this occurs; otherwise the point lies in $\mathcal T$.
	The cover need not be disjoint: $\mathcal T$ controls only
    $\mathcal P_{i,\mathcal J}$, whereas membership in $\mathcal S(j)$ may be
    caused by $\mathcal P_{j+1,\ell}$ with $\ell<\mathcal J$.  This causes no
    difficulty because the integrand is non-negative.  For every
    $t\in\mathcal S(j)$ and $j\ge1$ we have, exactly,
    \begin{equation*}
    |\mathcal P_{i,\ell}(t)|\le\theta_i
    \quad(1\le i\le j,\ i\le\ell\le\mathcal J),
    \qquad
    |\mathcal P_{j+1,\ell_0}(t)|>\theta_{j+1}
    \end{equation*}
    for at least one $\ell_0\in[j+1,\mathcal J]$; for
    $t\in\mathcal S(0)$ the latter assertion holds with $j=0$.
	Therefore, it suffices to prove that 
	\begin{align*}
	\nonumber	&\sum_{j=0}^{\mathcal{J}-1}\int_{t\in \mathcal{S}(j)} \prod_{i=1}^r\left| \zeta_{K_i}\left(1/2+i(t+b_i)\right)\right|^{a_i}\,dt +\int_{t\in\mathcal{T}} \prod_{i=1}^r\left| \zeta_{K_i}\left(1/2+i(t+b_i)\right)\right|^{a_i}\,dt\\
		& \ \ \ \ \ll T B(T,\mathbf b,\mathbf a).
	\end{align*}
To prove the above bound, we require the following three lemmas, which will be proved in the next section.
	\begin{lem}\label{first-main-lemma}
	For the fixed fields $K_1,\ldots,K_r$, and for sufficiently large $T$,
		\begin{equation*}
			\int_{t\in\mathcal{T}} \exp\left(2 \Re \sum_{p\le  T^{\alpha_\mathcal{J}}} \frac{\Theta_{T^{\alpha_{\mathcal{J}}}}(p)}{p^{1/2+it}}\mathfrak{h}(p)\right)\,dt \ll T B(T,\mathbf b,\mathbf a),
		\end{equation*}
		where the implicit constant depends only on the fields and on $\mathcal A$.
	\end{lem}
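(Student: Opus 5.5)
The plan is Harper's good-set argument. First, split the exponent into blocks. Since $\Theta_{T^{\alpha_{\mathcal J}}}(p)$ is the same weight on every block, we have
\begin{equation*}
\sum_{p\le T^{\alpha_{\mathcal J}}}\frac{\Theta_{T^{\alpha_{\mathcal J}}}(p)}{p^{1/2+it}}\mathfrak h(p)=\sum_{i=1}^{\mathcal J}\mathcal P_{i,\mathcal J}(t),
\end{equation*}
and hence the integrand factors as $\prod_{i=1}^{\mathcal J}\exp(2\Re\mathcal P_{i,\mathcal J}(t))$. On $\mathcal T$ each block satisfies $|\mathcal P_{i,\mathcal J}(t)|\le\theta_i$. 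Because $\theta_i>1$ is as large as Lemma~\ref{truncate-dirichlet-poly} requires, we apply that lemma with $V=\theta_i$ and $N_i=\lfloor100\theta_i\rfloor\ge10\theta_i$, using \eqref{eq:all-truncation-lengths}. This gives
\begin{equation*}
\exp(2\Re\mathcal P_{i,\mathcal J}(t))=(1+O(e^{-9\theta_i}))\left|\sum_{m\le N_i}\frac{\mathcal P_{i,\mathcal J}(t)^m}{m!}\right|^2 .
\end{equation*}
By \eqref{eq:truncation-error-product}, the product of the factors $1+O(e^{-9\theta_i})$ is $O(1)$. We then drop the restriction $t\in\mathcal T$, which is legitimate because each truncated square is non-negative. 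This bounds the integral by $O(1)$ times $\int_T^{2T}\prod_i|\sum_{m\le N_i}\mathcal P_{i,\mathcal J}^m/m!|^2\,dt$.

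Next, apply Lemma~\ref{complex-dirichlet-mean} with $R=\mathcal J$, the disjoint prime sets $\mathcal P_i=(T^{\alpha_{i-1}},T^{\alpha_i}]$, no $E_s$ factors, and coefficients $c_i(p)=\Theta_{T^{\alpha_{\mathcal J}}}(p)\mathfrak h(p)p^{-1/2}$. Every integer in the expansion is at most $\prod_i T^{\alpha_iN_i}\le T^{1/200}$ by \eqref{eq:numerical-support-bounds}, so the lemma's hypothesis holds. For the off-diagonal term, $|c_i(p)|\le a_Kp^{-1/2}$ since $0\le\Theta\le1/\log p$ and $|\mathfrak h(p)|\le a_K\log p$. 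The last estimate in \eqref{eq:numerical-support-bounds}, with $N_i\le\lfloor200\theta_i\rfloor$, then bounds it by $T^{1/10}\cdot T^{1/50}$, which is $O(T^{1/5})$. By Lemma~\ref{B-poly-bounds} this is $\ll TB(T,\mathbf b,\mathbf a)$.

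For the diagonal term, $\sum_{p\le T^{\alpha_{\mathcal J}}}|c(p)|^2\le\sum_{p\le T}|\mathfrak h(p)|^2/(p\log^2p)$, because $\Theta_x(p)\le1/\log p$. Proposition~\ref{frobenius-orbit-product} bounds its exponential by $\ll B(T,\mathbf b,\mathbf a)$, since $T^{\alpha_{\mathcal J}}\le T$. Combining this with the previous paragraph gives $\ll TB(T,\mathbf b,\mathbf a)$.

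The main point to check is the passage from the weighted sum to the unweighted sum in Proposition~\ref{frobenius-orbit-product}. This is pure positivity, since $|\Theta|^2\le1/\log^2p$ termwise, so no cancellation is needed. Beyond that, the only remaining care is verifying the support bound for all $\mathcal J$ blocks simultaneously, which \eqref{eq:numerical-support-bounds} already provides.
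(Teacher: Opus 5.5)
Your proposal is correct and follows the paper's proof essentially step for step. The shared steps are: the block decomposition with $N_i=\lfloor 100\theta_i\rfloor$; truncation via Lemma~\ref{truncate-dirichlet-poly}, with the error product controlled by \eqref{eq:truncation-error-product}; Lemma~\ref{complex-dirichlet-mean} with support at most $T^{1/200}$ and off-diagonal at most $T^{1/10}\cdot T^{1/50}$; and finally Proposition~\ref{frobenius-orbit-product} and Lemma~\ref{B-poly-bounds} for the diagonal term and the error.

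One small point to tighten: Lemma~\ref{truncate-dirichlet-poly} needs $\theta_i$ to exceed a sufficiently large absolute constant, and $\theta_i>1$ alone does not say that. It does hold here, because $\theta_i\ge\theta_{\mathcal J}\ge\sqrt{a'}\,(20e^{-1000a'})^{-3/4}$, which is enormous.
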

	\begin{lem}\label{second-main-lemma}
	For the fixed fields $K_1,\ldots,K_r$, one has $\text{meas}(\mathcal{S}(0))\ll T\exp(-c(\log\log T)^2)$, and for any $1\le  j\le  \mathcal{J}-1$,
		\begin{align*}
			\int_{t\in\mathcal{S}(j)} &\exp\left(2 \Re \sum_{p\le  T^{\alpha_j}} \frac{\Theta_{T^{\alpha_{j}}}(p)}{p^{1/2 +it}}\mathfrak{h}(p)\right)\,dt \\
            &\ll T B(T,\mathbf b,\mathbf a)
            \exp\left( -\frac{\log(1/\alpha_{j+1})}{C\alpha_{j+1}}\right),
		\end{align*}
			where $C=1600a_K^2$ is fixed above.
			Here $c>0$ and all implied constants depend only on the fields
			and on $\mathcal A$.
	\end{lem}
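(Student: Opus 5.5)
The statement to prove is Lemma~\ref{second-main-lemma}, and I would follow Harper's treatment of the exceptional sets. The two parts are handled separately.

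\textbf{The set $\mathcal S(0)$.} For $t\in\mathcal S(0)$ there is some $\ell$ with $|\mathcal P_{1,\ell}(t)|>\theta_1$. I would apply the union bound over the at most $\mathcal J\ll\log\log\log T$ values of $\ell$, and then Chebyshev with a $2M$-th moment:
\begin{equation*}
\operatorname{meas}\{|\mathcal P_{1,\ell}|>\theta_1\}\le\theta_1^{-2M}\int_T^{2T}|\mathcal P_{1,\ell}(t)|^{2M}\,dt.
\end{equation*}
The moment is bounded by Lemma~\ref{complex-dirichlet-mean}, taking a single set $\mathcal Q_1$ made of the primes $p\le T^{\alpha_1}$ with coefficients $\Theta(p)\mathfrak h(p)p^{-1/2}$. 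Since $\Theta_{T^{\alpha_\ell}}(p)\le1/\log p$, the bound \eqref{eq:h-prime-uniform-bounds} gives
\begin{equation*}
\sum_{p\le T^{\alpha_1}}\frac{|\mathfrak h(p)|^2}{p\log^2p}\le a_K^2\log\log T.
\end{equation*}
So the diagonal term is at most $T\,M!\,(a_K^2\log\log T)^M$. I would take $M\asymp\theta_1^2/(a_K^2\log\log T)$, with a small constant. Since $\theta_1=\sqrt{a'}(\log\log T)^{3/2}$, this makes $M\asymp(\log\log T)^2$, the support condition $T^{\alpha_1M}\le T^{1/10}$ holds, and the saving is $\exp(-c(\log\log T)^2)$. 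The off-diagonal term is $O(T^{1/5})$ and is negligible.

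\textbf{The sets $\mathcal S(j)$, $j\ge1$.} Fix $\ell_0\in[j+1,\mathcal J]$ (again a union bound over at most $\mathcal J$ choices) and use $1\le(|\mathcal P_{j+1,\ell_0}|/\theta_{j+1})^{2M}$ on $\mathcal S(j)$. On $\mathcal S(j)$ each block satisfies $|\mathcal P_{i,j}|\le\theta_i$ for $i\le j$. Writing $\exp(2\Re\sum_{p\le T^{\alpha_j}}\cdots)=\prod_{i\le j}\exp(2\Re\mathcal P_{i,j})$, Lemma~\ref{truncate-dirichlet-poly} with $N_i=\lfloor100\theta_i\rfloor$ (justified by \eqref{eq:all-truncation-lengths}) replaces each exponential by the square of a truncated exponential series. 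By \eqref{eq:truncation-error-product} this costs only a bounded factor. After extending the integral to all of $[T,2T]$, Lemma~\ref{complex-dirichlet-mean} applies with $\mathcal P_i$ the prime blocks and $\mathcal Q_1$ the primes in $(T^{\alpha_j},T^{\alpha_{j+1}}]$. The support bounds \eqref{eq:numerical-support-bounds} keep every integer below $T^{1/10}$ once $M\alpha_{j+1}\le1/50$.

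The diagonal term is then
\begin{equation*}
T\exp\Bigl(\sum_{p\le T^{\alpha_j}}\frac{|\mathfrak h(p)|^2}{p\log^2p}\Bigr)\,M!\,\Bigl(\sum_{T^{\alpha_j}<p\le T^{\alpha_{j+1}}}\frac{|\mathfrak h(p)|^2}{p\log^2p}\Bigr)^M\theta_{j+1}^{-2M}.
\end{equation*}
By Proposition~\ref{frobenius-orbit-product}, the first exponential is $\ll B(T,\mathbf b,\mathbf a)$. The second sum is at most $a_K^2(\log20+O(1))\ll a_K^2$ by Mertens. Choose $M=\lfloor1/(C'\alpha_{j+1})\rfloor$, which meets the support condition. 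Then
\begin{equation*}
M!\,(a_K^2)^M\theta_{j+1}^{-2M}\le\bigl(Ma_K^2\alpha_{j+1}^{3/2}/a'\bigr)^M\le\bigl(\alpha_{j+1}^{1/2}\bigr)^M=\exp\Bigl(-\frac{\log(1/\alpha_{j+1})}{2C'\alpha_{j+1}}\Bigr).
\end{equation*}
Taking $C'$ of order $a_K^2$ (below $C/2$) gives the claimed bound. The factor $\mathcal J$ from the union bound, and the off-diagonal term of size $T^{1/10}\cdot T^{1/50}\cdot(\ldots)^{2M}$, are absorbed using Lemma~\ref{B-poly-bounds} and \eqref{eq:explicit-error-absorption}.

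\textbf{Main obstacle.} The delicate step is the bookkeeping for the truncations. Lemma~\ref{truncate-dirichlet-poly} is applied on a pointwise set, and the truncated series must then be integrated over all of $[T,2T]$ (allowed since the integrand is non-negative). At the same time, the off-diagonal term must stay below $T\,B\,\exp(-\cdots)$ uniformly in the shifts. That requires the lower bound $B\gg(\log T)^{-C_B}$ from Lemma~\ref{B-poly-bounds}, together with the numerical conditions \eqref{eq:numerical-support-bounds} on the lengths of the Dirichlet polynomials.
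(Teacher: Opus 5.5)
Your outline is the paper's argument: Chebyshev with a $2M$-th moment and $M\asymp 1/(a_K^2\alpha_{j+1})$, truncation by Lemma~\ref{truncate-dirichlet-poly}, Lemma~\ref{complex-dirichlet-mean} with the exceptional block as the auxiliary set, and Proposition~\ref{frobenius-orbit-product} for the diagonal. Your treatment of $\mathcal S(0)$ is fine, because the saving $\exp(-c(\log\log T)^2)$ easily absorbs the union-bound factor $\mathcal J\ll\log\log\log T$.

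\textbf{Where it fails.} The problem is the union bound over $\ell_0\in[j+1,\mathcal J]$ for $j\ge1$. You bound the number of choices by $\mathcal J$ and say this factor is absorbed via Lemma~\ref{B-poly-bounds} and \eqref{eq:explicit-error-absorption}. Those two facts only absorb errors that already carry a power saving in $T$, namely the off-diagonal terms of size about $\mathcal J T^{3/4}$. They do nothing for a factor multiplying the main diagonal term $TB(T,\mathbf b,\mathbf a)\exp(-\log(1/\alpha_{j+1})/(2C'\alpha_{j+1}))$.

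For $j=\mathcal J-1$ one has $\alpha_{j+1}=\alpha_{\mathcal J}\in(e^{-1000a'},20e^{-1000a'}]$. So your saving and the target factor $\exp(-\log(1/\alpha_{j+1})/(C\alpha_{j+1}))$ are both fixed constants, while $\mathcal J\asymp\log\log\log T\to\infty$. A factor $\mathcal J$ therefore cannot sit inside an implied constant depending only on the fields and $\mathcal A$. It would also spoil the sum over $j$ in the proof of Theorem~\ref{main-theorem}.

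\textbf{The fix.} Count exactly. There are only $\mathcal J-j\le 2+\log(1/\alpha_{j+1})/\log 20$ admissible $\ell$, a quantity controlled by $\alpha_{j+1}$ alone and not by $T$. You must absorb it into the exponential saving, using strict slack between the constant your moment bound produces and $C$.

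The paper does this by proving the saving with $C_0=C/4$ in \eqref{eq:second-lemma-saving}. It then uses $\log(\mathcal J-j)\le\tfrac34\log(1/\alpha_{j+1})/(C_0\alpha_{j+1})$ to obtain \eqref{eq:uniform-ell-sum}.

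In your version, keep track of the floor in $M=\lfloor 1/(C'\alpha_{j+1})\rfloor$. Your displayed equality ignores it; the correct bound is $(\alpha_{j+1}^{1/2})^M\le \alpha_{j+1}^{-1/2}\exp(-\log(1/\alpha_{j+1})/(2C'\alpha_{j+1}))$. With $C'<C/2$ strictly, set $\kappa=\tfrac1{2C'}-\tfrac1C>0$. Then the quantity $\bigl(2+\log(1/\alpha)/\log 20\bigr)\alpha^{-1/2}\exp(-\kappa\log(1/\alpha)/\alpha)$ is bounded uniformly for $0<\alpha\le 20e^{-1000a'}$, and this gives the lemma as stated.
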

	
	\begin{lem}\label{third-main-lemma}
		With the notation and assumptions above,
		\begin{align}
		\int_{t\in\mathcal{T}} &\exp\left(2 \Re \left(\sum_{p\le  T^{\alpha_\mathcal{J}}} \frac{\Theta_{T^{\alpha_{\mathcal{J}}}}(p)}{p^{1/2 +it}}\mathfrak{h}(p)+\frac{1}{2}\sum_{p\le  \log T} \frac{\mathfrak{h}(p^2)}{p^{1+2it}\log p}\right)\right)\,dt\nonumber\\
			&\ll_{\mathcal A,\mathbf K} T B(T,\mathbf b,\mathbf a).
		\label{first-integral-third-lemma}
		\end{align}
		and for $1\le  j\le  \mathcal{J}-1$, we have 
		\begin{align}\label{second-integral-third-lemma}
			\int_{t\in\mathcal{S}(j)}&\exp\left(2 \Re \left(\sum_{p\le  T^{\alpha_j}} \frac{\Theta_{T^{\alpha_j}}(p)}{p^{1/2 +it}}\mathfrak{h}(p)+\frac{1}{2}\sum_{p\le  \log T} \frac{\mathfrak{h}(p^2)}{p^{1+2it}\log p}\right)\right)\,dt\\
			& \nonumber\ll_{\mathcal A,\mathbf K}T B(T,\mathbf b,\mathbf a)\exp\left( -\frac{\log(1/\alpha_{j+1})}{C\alpha_{j+1}}\right).
		\end{align}
		\end{lem}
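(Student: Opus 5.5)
The plan is to reduce Lemma~\ref{third-main-lemma} to Lemmas~\ref{first-main-lemma} and~\ref{second-main-lemma} by treating the prime-square polynomial as a separate, essentially independent factor. Write
\begin{equation*}
Q(t)=\frac12\sum_{p\le\log T}\frac{\mathfrak h(p^2)}{p^{1+2it}\log p}.
\end{equation*}
Split $Q$ dyadically into pieces $Q_m(t)$ over $2^m<p\le2^{m+1}$, for $0\le m\le\log\log T/\log2$. By \eqref{eq:h-prime-uniform-bounds}, $|Q_m(t)|\ll a_K2^{-m}\cdot 2^m/m\ll a_K/(m+1)$ trivially. This bound is not summable, so I would not use a crude pointwise estimate for all $m$.

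Instead, I would use the standard Harper device. The small primes $p\le (\log\log T)^{C'}$ contribute $\Re Q\le \sum_{p\le(\log\log T)^{C'}}a_K/p\ll \log\log\log\log T$. That is too big to absorb trivially, so I would go back to the dyadic decomposition. Let $\mathcal M$ be the set of $m$ with $|Q_m(t)|>2^{-m/4}$. Off $\mathcal M$, $\sum_m|Q_m|\ll1$ and the factor $\exp(2\Re Q)$ is $O(1)$. For $t$ with $m\in\mathcal M$, I would use Chebyshev: bound the indicator by $(2^{m/4}|Q_m(t)|)^{2N}$ with $N=\lfloor2^{3m/4}\rfloor$. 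On the set where some $|Q_m|$ is large, the trivial bound $\exp(2\Re Q)\le\exp(10a_K2^{m/2})$-type estimates, coming from all the remaining $m'$, are needed. More precisely, I would order by the largest bad $m$, bound the $Q_{m'}$ for $m'>m$ by $2^{-m'/4}$, and bound the $Q_{m'}$ for $m'\le m$ trivially by $\sum_{p\le2^{m+1}}a_K/p$, which is small compared with $2^{m/2}$.

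Next I would insert these weights into the integrals of Lemmas~\ref{first-main-lemma} and~\ref{second-main-lemma}. On $\mathcal T$ (resp.\ $\mathcal S(j)$), replace $\exp(2\Re\sum\cdots)$ by the truncated squared exponential blocks via Lemma~\ref{truncate-dirichlet-poly} and \eqref{eq:truncation-error-product}. Then multiply by $|2^{m/4}Q_m(t)|^{2N}$, viewed as $|E(t)^N|^2$ with $E$ supported on the integers $p^2$, $2^m<p\le2^{m+1}$. These integers are coprime to the large primes in the $\mathcal P_{i,\mathcal J}$ blocks, since $T^{\alpha_0}<p$ for the first block. The first block starts at primes $>1$, though, so overlap with small primes is possible. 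I would handle this by noting that $p^2$ versus $p$ still gives distinct integers, and by applying a version of Lemma~\ref{complex-dirichlet-mean} where the $\mathcal Q_s$ entries are $p^2$, which are coprime to the other primes except the same $p$. This is the step I expect to be the main obstacle; I would try to repair it by removing from the first block the primes $p\le2^{m+1}$, whose contribution is $\le\sum_{p\le\log T}a_K/\sqrt p$ in the exponent. That removal would only be affordable on the event where the bad $m$ is small, which is why the first line of \eqref{eq:numerical-square-growth} is restricted to $m\le 2\log\log\log T/\log 2$. For larger $m$ I would use the second line together with Lemma~\ref{shifted-moment-trivial-bound}-type polynomial losses against the saving.

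Lemma~\ref{complex-dirichlet-mean} then gives a diagonal bound of
\begin{equation*}
T\exp\Big(\sum|\mathfrak h(p)|^2\Theta^2/p\Big)\,N!\,(2^{m/2}C a_K^2 2^{-m})^{N},
\end{equation*}
with off-diagonal terms controlled by \eqref{eq:numerical-support-bounds} and \eqref{eq:numerical-square-growth}, since the lengths are at most $T^{1/10}$. Proposition~\ref{frobenius-orbit-product} turns the exponential into $B(T,\mathbf b,\mathbf a)$, and on $\mathcal S(j)$ the extra Chebyshev factor from $\mathcal P_{j+1,\ell}$ gives the decay $\exp(-\log(1/\alpha_{j+1})/(C\alpha_{j+1}))$, exactly as in the proof of Lemma~\ref{second-main-lemma}. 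The $m$-th term gains $(N2^{-m/2}C)^N\ll\exp(-c2^{3m/4})$, which beats the loss $\exp(10a_K2^{m/2})$. Summing over $m$ then gives \eqref{first-integral-third-lemma} and \eqref{second-integral-third-lemma}.
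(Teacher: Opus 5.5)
Your architecture matches the paper's: dyadic blocks of the prime-square sum, a partition of $[T,2T]$ by the largest index $m$ at which a block exceeds its threshold, Chebyshev with $N=\lfloor 2^{3m/4}\rfloor$, deletion of the primes $p\le 2^{m+1}$ from the first block so that $Q_m^N$ lives on integers $p^2$ coprime to everything else, and a cruder treatment once $m>2\log\log\log T/\log 2$. But the central saving fails with your threshold $2^{-m/4}$. Since $N!\ge (N/e)^N$ and $N\ge 2^{3m/4}/2$, your own diagonal factor $N!\,(2^{m/2}Ca_K^2 2^{-m})^N$ is at least $(Ca_K^2 2^{m/4}/2e)^N$, which grows with $m$. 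The claimed gain $(N2^{-m/2}C)^N\ll e^{-c2^{3m/4}}$ is false, because $N2^{-m/2}\asymp 2^{m/4}$. The threshold must exceed $\sqrt N$ times the standard deviation $2^{-m/2}$ of $Q_m$ by a positive power of $2^m$. The paper takes $2^{-m/10}$, so the factor becomes $2^{mN/5}N!\,(C2^{-m})^N\le (C2^{-m/20})^N$, which beats the deletion cost. That cost is $\exp(O(\sum_{p\le 2^{m+1}}a_Kp^{-1/2}))=e^{O(a_K2^{m/2})}$. The bound $\sum_{p\le\log T}a_K/\sqrt p$ you wrote is of size $\sqrt{\log T}/\log\log T$ in the exponent and could never be afforded.

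The large-$m$ range also does not give \eqref{second-integral-third-lemma} as sketched. Write $P_j(t)=\sum_{p\le T^{\alpha_j}}\Theta_{T^{\alpha_j}}(p)\mathfrak h(p)p^{-1/2-it}$ and $E_j=\exp(-\log(1/\alpha_{j+1})/(C\alpha_{j+1}))$. For these $m$ the only saving is the measure of the set where $m$ is the largest bad index, which is $\ll Te^{-c2^{3m/4}}+T^{1/2}$. At the start of the range this is only $e^{-c(\log\log T)^{3/2}}$. By contrast, $E_1=\exp\bigl(-(\log\log T)^2(2\log\log\log T-\log 20)/(20C)\bigr)$ is far smaller. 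So ``polynomial losses against the saving'' give at best $T(\log T)^{O(1)}e^{-c(\log\log T)^{3/2}}$ on $\mathcal S(j)$, which is not $\ll TB(T,\mathbf b,\mathbf a)E_j$. You cannot drop $E_j$: in the proof of Theorem~\ref{main-theorem} it must cancel the conductor loss $e^{a_K/\alpha_j}$. Lemma~\ref{shifted-moment-trivial-bound} is also unusable here, since it bounds zeta moments, whereas Proposition~\ref{main-prop} bounds $\log|\zeta|$ above by the polynomials, not conversely.

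The missing step is to carry $E_j$ through the crude factor. The paper applies Cauchy--Schwarz with $\int_{\mathcal S(j)}e^{4\Re P_j}\,dt\ll T(\log T)^{O(1)}E_j^2$. This is proved by truncating the doubled blocks at $\lfloor 200\theta_i\rfloor$ and rerunning the Chebyshev step on $\mathcal P_{j+1,\ell}$ with the doubled power $\lfloor 1/(50a_K^2\alpha_{j+1})\rfloor$, so that $E_j$ survives the square root. Alternatively, you could run the deletion argument up to $2^m\le\frac14(\log\log T)^3$, where the deleted part is still at most $\theta_1$. Beyond that point $e^{-c2^{3m/4}}\le e^{-c''(\log\log T)^{9/4}}$, which does beat $E_j$ and every power of $\log T$.
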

		\subsection{Proof of Theorem~\ref{main-theorem}}
\begin{proof}
Since $|b_i|\le T/2$, Proposition~\ref{main-prop} applies uniformly to each shifted zeta function. For $t\in \mathcal{T}$ we apply Proposition~\ref{main-prop} with $x=T^{\alpha_{\mathcal J}}$.  By \eqref{eq:square-root-range}, we have $\sqrt{x}\ge \log T$ so the prime-square contribution is truncated at $\log T$. By \eqref{eq:terminal-alpha-block},
the conductor contribution satisfies $a_K/\alpha_{\mathcal J}\le a_Ke^{1000a'}$.  The first assertion of
Lemma~\ref{third-main-lemma} therefore gives the required bound on
$\mathcal T$.

For $1\le j \le \mathcal{J}-1$ and $t\in \mathcal S(j)$, take $x=T^{\alpha_j}$.  The second
assertion of Lemma~\ref{third-main-lemma} gives
\begin{equation*}
 \int_{t\in \mathcal S(j)}\prod_{i=1}^r
 \left|\zeta_{K_i}\left(\frac12+i(t+b_i)\right)\right|^{a_i}\,dt
 \ll TB(T,\mathbf b,\mathbf a)e^{a_K/\alpha_j}
 \exp\left(-\frac{\log(1/\alpha_{j+1})}{C\alpha_{j+1}}\right).
\end{equation*}
The last condition in \eqref{eq:fixed-parameter-conditions}, together with
$\alpha_{j+1}=20\alpha_j$, bounds the product of two exponential factors by
$e^{-1/\alpha_j}$.  Since $\alpha_{\mathcal J-1-k}
=\alpha_{\mathcal J-1}/20^k$ and $\alpha_{\mathcal J-1}\le e^{-1000a'}$,
\begin{equation*}
\sum_{j=1}^{\mathcal J-1}e^{-1/\alpha_j}
\le\sum_{k\ge0}\exp(-20^ke^{1000a'})\ll_{a'}1.
\end{equation*}

Finally, applying the Cauchy--Schwarz inequality together with Lemmas~\ref{second-main-lemma} and \ref{shifted-moment-trivial-bound} (the latter with exponents $2a_i$), we obtain
\begin{equation*}
 \int_{t\in \mathcal S(0)}\prod_{i=1}^r
 \left|\zeta_{K_i}\left(\frac12+i(t+b_i)\right)\right|^{a_i}\,dt
 \ll T(\log T)^D e^{-c(\log\log T)^2}.
\end{equation*}
By Lemma~\ref{B-poly-bounds}, the right side is
$O(TB(T,\mathbf b,\mathbf a))$ after increasing the lower bound on $T$.
The decomposition \eqref{integral-splits} completes the proof.
\end{proof}
		\section{Proofs of the main lemmas}
    The proof is the Harper good-set argument, with the prime diagonal evaluated by Proposition~\ref{frobenius-orbit-product}.
	
\subsection{Proof of Lemma~\ref{first-main-lemma}}

\begin{proof}
For $1\le i\le\mathcal J$, write
\begin{equation*}
 \mathcal P_i(t)=\mathcal P_{i,\mathcal J}(t)
 =\sum_{T^{\alpha_{i-1}}<p\le T^{\alpha_i}}c_i(p)p^{-it},
 \qquad c_i(p)=\frac{\Theta_{T^{\alpha_{\mathcal J}}}(p)\mathfrak h(p)}{\sqrt p},
\end{equation*}
and let $N_i=\lfloor100\theta_i\rfloor$. For $t\in \mathcal T$, we have
$|\mathcal P_i(t)|\le\theta_i$ and $N_i\ge10\theta_i$.  Hence
Lemma~\ref{truncate-dirichlet-poly} and \eqref{eq:truncation-errors-summable}
give
\begin{align}\label{eq:first-lemma-truncation}
 \int_{t\in \mathcal T}\prod_{i\le\mathcal J}e^{2\Re\mathcal P_i(t)}\,dt
 \ll\int_T^{2T}\left|\prod_{i\le\mathcal J}
 \sum_{n\le N_i}\frac{\mathcal P_i(t)^n}{n!}\right|^2\,dt.
\end{align}
The support is at most $T^{\sum_i\alpha_iN_i}\le T^{1/200}$.
Lemma~\ref{complex-dirichlet-mean}, together with the coefficient-$L^1$ bound in \eqref{eq:numerical-support-bounds}, gives
\begin{align}\label{eq:first-lemma-mean}
 \int_{t\in \mathcal T}\prod_{i\le\mathcal J}e^{2\Re\mathcal P_i(t)}\,dt
 \ll T\exp\left(\sum_{i\le\mathcal J}\sum_{T^{\alpha_{i-1}}<p\le T^{\alpha_i}}
 |c_i(p)|^2\right)+T^{1/5}.
\end{align}
Indeed, the off-diagonal term is at most $T^{1/10}T^{1/50}$, while
$|c_i(p)|^2\le|\mathfrak h(p)|^2/(p\log^2p)$.  Applying Proposition~\ref{frobenius-orbit-product} and Lemma~\ref{B-poly-bounds} to \eqref{eq:first-lemma-mean} completes the proof.
\end{proof}

\subsection{Proof of Lemma~\ref{second-main-lemma}}

\begin{proof}
We first consider $1\le  j\le\mathcal J-1$.  For $1\le  i\le  j$, let
\begin{equation*}
        c_{i,j}(p)=\frac{\Theta_{T^{\alpha_j}}(p)\mathfrak h(p)}{\sqrt p},
        \qquad
        \mathcal P_{i,j}(t)=
        \sum_{T^{\alpha_{i-1}}<p\le  T^{\alpha_i}}c_{i,j}(p)p^{-it},
\end{equation*}
and let $N_i=\lfloor100\theta_i\rfloor$.  Since every $t\in\mathcal S(j)$ satisfies $|\mathcal P_{i,j}(t)|\le\theta_i$ for $1\le i\le j$, \eqref{eq:all-truncation-lengths} gives $N_i\ge10\theta_i$.  Lemma~\ref{truncate-dirichlet-poly}, with \eqref{eq:truncation-errors-summable}, gives
\begin{align}\label{eq:second-lemma-after-truncation}
        &\int_{t\in \mathcal S(j)}
        \exp\left(2\Re\sum_{p\le  T^{\alpha_j}}
        \frac{\Theta_{T^{\alpha_j}}(p)\mathfrak h(p)}{p^{1/2+it}}\right)\,dt        \\
        &\qquad\ll
        \sum_{\ell=j+1}^{\mathcal J}
        \int_T^{2T}
        \left|
        \prod_{i=1}^{j}\sum_{0\le  n\le  N_i}\frac{\mathcal P_{i,j}(t)^n}{n!}
        \right|^2
        \mathbf{1}_{\{|\mathcal P_{j+1,\ell}(t)|>\theta_{j+1}\}}\,dt.
\end{align}
Let
\begin{equation*}
        M=\left\lfloor\frac{1}{100a_K^2\alpha_{j+1}}\right\rfloor.
\end{equation*}
For the indicator in \eqref{eq:second-lemma-after-truncation} we use
\begin{equation*}
        \mathbf{1}_{\{|\mathcal P_{j+1,\ell}(t)|>\theta_{j+1}\}}
        \le
        \theta_{j+1}^{-2M}|\mathcal P_{j+1,\ell}(t)|^{2M}.
\end{equation*}
We now apply Lemma~\ref{complex-dirichlet-mean} with prime blocks $T^{\alpha_{i-1}}<p\le  T^{\alpha_i}$, $1\le  i\le  j$, and with
\begin{equation*}
        E_1(t)=\mathcal P_{j+1,\ell}(t),
        \qquad
        \mathcal Q_1=\{p:T^{\alpha_j}<p\le  T^{\alpha_{j+1}}\}.
\end{equation*}
The support condition in that lemma holds, since every integer which occurs is at most
\begin{equation*}
        \left(\prod_{i=1}^{j}T^{\alpha_iN_i}\right)T^{\alpha_{j+1}M}
        \le
        T^{1/200+1/(100a_K^2)}
        \le  T^{1/10}
\end{equation*}
by \eqref{eq:numerical-support-bounds}.  Let
\begin{equation*}
        V_{j,\ell}:=
        \sum_{T^{\alpha_j}<p\le  T^{\alpha_{j+1}}}
        \frac{\Theta_{T^{\alpha_\ell}}(p)^2|\mathfrak h(p)|^2}{p}.
\end{equation*}
Applying Lemma~\ref{complex-dirichlet-mean}, we obtain
\begin{align}\label{eq:second-lemma-mean-bound}
        &\int_T^{2T}
        \left|
        \prod_{i=1}^{j}\sum_{0\le  n\le  N_i}\frac{\mathcal P_{i,j}(t)^n}{n!}
        \right|^2
        |\mathcal P_{j+1,\ell}(t)|^{2M}\,dt                         \\
        &\qquad\ll
        T\exp\left(
        \sum_{p\le  T^{\alpha_j}}\frac{|\mathfrak h(p)|^2}{p\log^2p}
        \right)M!V_{j,\ell}^M
        +O(T^{1/2}).
\end{align}
For the off-diagonal term, \eqref{eq:numerical-support-bounds} bounds the
first squared coefficient-$L^1$ product by $T^{1/50}$.  Moreover
\begin{equation*}
        \sum_{T^{\alpha_j}<p\le  T^{\alpha_{j+1}}}
        \frac{\Theta_{T^{\alpha_\ell}}(p)|\mathfrak h(p)|}{\sqrt p}
        \le
        \frac{a_K}{2}\sum_{p\le  T^{\alpha_{j+1}}}p^{-1/2}
        \ll
        \frac{a_KT^{\alpha_{j+1}/2}}
        {\alpha_{j+1}\log T}
        \le T^{\alpha_{j+1}/2}.
\end{equation*}
The last inequality is uniform because
$\alpha_{j+1}\ge(\log\log T)^{-2}$.  Raising this estimate to the power
$2M$ gives
\begin{equation}\label{eq:exceptional-l1-bound}
        \left(
        \sum_{T^{\alpha_j}<p\le T^{\alpha_{j+1}}}
        \frac{\Theta_{T^{\alpha_\ell}}(p)|\mathfrak h(p)|}{\sqrt p}
        \right)^{2M}
        \le T^{\alpha_{j+1}M}
        \le T^{1/(100a_K^2)}
        \le T^{1/100}.
\end{equation}
Hence the whole off-diagonal contribution is at most
$    T^{1/10}T^{1/50}T^{1/100}=T^{13/100}\le T^{1/4}$.
Multiplication by the factor $\theta_{j+1}^{-2M}$ can only decrease this contribution.

For the diagonal, \eqref{eq:h-prime-uniform-bounds} gives
$|\mathfrak h(p)|<a_K\log p/2$.  Mertens' estimate gives
\begin{equation*}
        \sum_{T^{\alpha_j}<p\le T^{\alpha_{j+1}}}\frac1p
        \le\log20+1<4
\end{equation*}
uniformly for $j<\mathcal J$, after increasing the lower bound on $T$.
Since $\Theta_x(p)\le1/\log p$, it follows that
\begin{equation*}
        V_{j,\ell}
        \le
        \frac{a_K^2}{4}\sum_{T^{\alpha_j}<p\le  T^{\alpha_{j+1}}}\frac1p
        \le a_K^2.
\end{equation*}
By the second condition in \eqref{eq:fixed-parameter-conditions},
\begin{equation*}
        \frac{1}{100a_K^2\alpha_{j+1}}\ge2,
        \qquad
        M\ge\frac{1}{200a_K^2\alpha_{j+1}}.
\end{equation*}
Using $M!\le M^M$ and $\theta_{j+1}^2=a'\alpha_{j+1}^{-3/2}$, we obtain
\begin{align}\label{eq:second-lemma-saving}
        \theta_{j+1}^{-2M}M!V_{j,\ell}^M
        &\le
        \left(\frac{Ma_K^2}{\theta_{j+1}^2}\right)^M
        \le
        \left(\frac{\alpha_{j+1}^{1/2}}{100a'}\right)^M                                      \\
        &\le
        \exp\left(-\frac{\log(1/\alpha_{j+1})}{C_0\alpha_{j+1}}\right).
\end{align}
Proposition~\ref{frobenius-orbit-product} gives
\begin{equation*}
        \exp\left(
        \sum_{p\le  T^{\alpha_j}}\frac{|\mathfrak h(p)|^2}{p\log^2p}
        \right)
        \ll B(T,\mathbf b,\mathbf a).
\end{equation*}
We now sum uniformly over $\ell$. By the geometric definition of the blocks,
\begin{equation*}
        \alpha_{\mathcal J}
        =20^{\mathcal J-j-1}\alpha_{j+1},
        \qquad
        \mathcal J-j
        =1+\frac{\log(\alpha_{\mathcal J}/\alpha_{j+1})}{\log20}
        \le
        2+\frac{\log(1/\alpha_{j+1})}{\log20}.
\end{equation*}
Here $\alpha_{\mathcal J}\le20e^{-1000a'}<1$.  Also
$C_0\alpha_{j+1}\le C_0\alpha_{\mathcal J}\le2$ by the second
condition in \eqref{eq:fixed-parameter-conditions}.  Since
$\log(1/\alpha_{j+1})\ge1000a'-\log20$, our fixed choice of $a'$ gives
\begin{equation*}
        \log(\mathcal J-j)
        \le\frac14\log(1/\alpha_{j+1})
        \le
        \frac{3\log(1/\alpha_{j+1})}{4C_0\alpha_{j+1}}.
\end{equation*}
Consequently, since $C=4C_0$,
\begin{equation}\label{eq:uniform-ell-sum}
        (\mathcal J-j)
        \exp\left(-\frac{\log(1/\alpha_{j+1})}{C_0\alpha_{j+1}}\right)
        \le
        \exp\left(-\frac{\log(1/\alpha_{j+1})}{C\alpha_{j+1}}\right).
\end{equation}
Combining \eqref{eq:second-lemma-after-truncation}, \eqref{eq:second-lemma-mean-bound}, \eqref{eq:second-lemma-saving}, and \eqref{eq:uniform-ell-sum} yields
\begin{equation*}
        \int_{t\in\mathcal S(j)}
        \exp\left(2\Re\sum_{p\le  T^{\alpha_j}}
        \frac{\Theta_{T^{\alpha_j}}(p)\mathfrak h(p)}{p^{1/2+it}}\right)\,dt
        \ll
        T B(T,\mathbf b,\mathbf a)
        \exp\left(-\frac{\log(1/\alpha_{j+1})}{C\alpha_{j+1}}\right).
\end{equation*}
It remains to absorb the $O(T^{1/2})$ terms.  Since
$\alpha_1\le\alpha_{j+1}<1$, we have
\begin{equation*}
        \frac{\log(1/\alpha_{j+1})}{C\alpha_{j+1}}
        \le
        \frac2C(\log\log T)^2\log\log\log T.
\end{equation*}
Lemma~\ref{B-poly-bounds} and
\eqref{eq:explicit-error-absorption} therefore give
\begin{equation*}
        T B(T,\mathbf b,\mathbf a)
        \exp\left(-\frac{\log(1/\alpha_{j+1})}{C\alpha_{j+1}}\right)
        \ge \mathcal J T^{3/4},
\end{equation*}
which absorbs the sum of all these errors.

It remains to bound $\operatorname{meas}(\mathcal S(0))$.  If $t\in\mathcal S(0)$, then $|\mathcal P_{1,\ell}(t)|>\theta_1$ for some $1\le\ell\le\mathcal J$.  With
\begin{equation*}
        M_0=\left\lfloor\frac{1}{100a_K^2\alpha_1}\right\rfloor
\end{equation*}
we apply Lemma~\ref{complex-dirichlet-mean} with $R=0$, $E_1(t)=\mathcal P_{1,\ell}(t)$, and $M_1=M_0$.  Its support is at most
\begin{equation*}
        T^{\alpha_1M_0}\le T^{1/(100a_K^2)}\le T^{1/10}.
\end{equation*} The diagonal estimate together with the coefficient $L^1$-bound for the off-diagonal contribution gives
\begin{align*}
        \operatorname{meas}(\mathcal S(0))
        &\le
        \sum_{\ell=1}^{\mathcal J}\theta_1^{-2M_0}
        \int_T^{2T}|\mathcal P_{1,\ell}(t)|^{2M_0}\,dt                         \\
        &\ll
        \sum_{\ell=1}^{\mathcal J}
        \theta_1^{-2M_0}
        \left(T M_0!V_{0,\ell}^{M_0}+T^{1/2}\right),
\end{align*}
where
\begin{equation*}
        V_{0,\ell}=\sum_{p\le  T^{\alpha_1}}
        \frac{\Theta_{T^{\alpha_\ell}}(p)^2|\mathfrak h(p)|^2}{p}
        \le a_K^2\log\log T
\end{equation*}
after increasing the lower bound on $T$.  Moreover,
\begin{equation*}
\frac{(\log\log T)^2}{200a_K^2}
\le M_0\le\frac{(\log\log T)^2}{100a_K^2}.
\end{equation*}
Since $\theta_1^2=a'(\log\log T)^3$ and $M_0!\le M_0^{M_0}$,
\begin{equation*}
        \theta_1^{-2M_0}M_0!V_{0,\ell}^{M_0}
        \le
        \left(\frac1{100a'}\right)^{M_0}
        \le\exp(-c(\log\log T)^2)
\end{equation*}
for a fixed $c=c(a_K,a')>0$.  Moreover, the geometric definition gives
\begin{equation*}
        \mathcal J
        =1+\frac{\log(\alpha_{\mathcal J}/\alpha_1)}{\log20}
        \ll_{a'}\log\log\log T,
\end{equation*}
and hence, after decreasing $c$ once,
\begin{equation*}
        \mathcal J\exp(-c(\log\log T)^2)
        \ll \exp(-c(\log\log T)^2).
\end{equation*}
It follows that
\begin{equation*}
        \operatorname{meas}(\mathcal S(0))
        \ll
        T\exp(-c(\log\log T)^2).
\end{equation*}
Here the sum of the $T^{1/2}$ terms is smaller than the above bound once
$\log\mathcal J+c(\log\log T)^2\le\frac12\log T$.
The proof of Lemma~\ref{second-main-lemma} is complete.
\end{proof}

\subsection{Proof of Lemma~\ref{third-main-lemma}}

\begin{proof}
Let
\begin{equation*}
 M_T=\lceil\log_2\log T\rceil-1,
 \qquad I_m=\{p:2^m<p\le\min\{2^{m+1},\log T\}\}
 \qquad(0\le m\le M_T),
\end{equation*}
and define
\begin{equation*}
 R(t)=\sum_{p\le\log T}\frac{\mathfrak h(p^2)}{2p^{1+2it}\log p},
 \qquad R_m(t)=\sum_{p\in I_m}\frac{\mathfrak h(p^2)}{2p^{1+2it}\log p}.
\end{equation*}
Thus $R=\sum_{m=0}^{M_T}R_m$, where the last interval is truncated at $\log T$.
The coefficient bound \eqref{eq:h-prime-uniform-bounds} yields the estimates, including the last interval,
\begin{equation}\label{eq:square-block-norms}
 \sum_{p\in I_m}\frac{|\mathfrak h(p^2)|^2}{4p^2\log^2p}
 \le C\,2^{-m},
 \qquad
 \sum_{p\in I_m}\frac{|\mathfrak h(p^2)|}{2p\log p}\le a_K.
\end{equation}

Let $\mathcal R(m)$ be the set of $t\in[T,2T]$ such that
\begin{equation*}
 |R_m(t)|>2^{-m/10},\qquad |R_n(t)|\le2^{-n/10}\quad(m<n\le M_T),
\end{equation*}
and let
\begin{equation*}
 \mathcal R_*=\{t\in[T,2T]:|R_m(t)|\le2^{-m/10}
 \text{ for every }0\le m\le M_T\}.
\end{equation*}
The sets $\mathcal R_*,\mathcal R(0),\ldots,\mathcal R(M_T)$ are disjoint
and cover $[T,2T]$: in $\mathcal R(m)$, $m$ is the largest index for which the inequality fails.

Fix $0\le m\le M_T$ and set $N=\lfloor2^{3m/4}\rfloor$.  The indicator inequality
\begin{equation}\label{eq:square-block-indicator}
 \mathbf1_{\mathcal R(m)}(t)
 \le2^{mN/5}|R_m(t)|^{2N}
\end{equation}
and Lemma~\ref{complex-dirichlet-mean}, applied with the pairwise coprime
auxiliary integers $q=p^2$ and coefficients
$\mathfrak h(p^2)/(2p\log p)$, give, for some fixed $c>0$,
\begin{equation}\label{eq:square-block-measure}
 \operatorname{meas}(\mathcal R(m))
 \ll T2^{mN/5}N!(C\,2^{-m})^N+T^{1/2}
 \ll T e^{-c\,2^{3m/4}}+T^{1/2}.
\end{equation}
Here $(2^{2m+2})^N\le T^{1/100}$, and the off-diagonal term is
$T^{1/10}\exp(O(m2^{3m/4}))=O(T^{1/2})$.  Moreover, for $t\in\mathcal R(m)$,
\begin{equation}\label{eq:square-block-pointwise}
 |\Re R(t)|\ll\sum_{p\le2^{m+1}}p^{-1}
 +\sum_{n>m}2^{-n/10}\ll\log(m+2)+1.
\end{equation}

We now estimate the contribution jointly with the main Dirichlet polynomial.
Fix a constant $D>0$, depending only on the fields and $\mathcal A$, large
enough to dominate the constants below.  Write
\begin{equation*}
 Q_k(t)=\sum_{p\le T^{\alpha_k}}
 \frac{\Theta_{T^{\alpha_k}}(p)\mathfrak h(p)}{p^{1/2+it}},
 \qquad E_j=\exp\left(-\frac{\log(1/\alpha_{j+1})}{C\alpha_{j+1}}\right).
\end{equation*}
Suppose $0\le m\le2\log\log\log T/\log2$.  Remove from $Q_k$ the terms corresponding to primes $p\le2^{m+1}$ and denote the resulting polynomial by $Q_k^{(m)}$.
In this range,
\begin{equation*}
2^{m+1}\le2(\log\log T)^2<T^{\alpha_1},
\end{equation*}
so the deletion affects only the first prime block.  Moreover,
\begin{equation*}
\sum_{p\le2^{m+1}}
\frac{\Theta_{T^{\alpha_k}}(p)|\mathfrak h(p)|}{\sqrt p}
\le2a_K2^{m/2}\le2a_K\log\log T\le\theta_1
\end{equation*}
for all sufficiently large $T$.  Thus, on $\mathcal T$ with
$k=\mathcal J$ and on $\mathcal S(j)$ with $k=j$, the altered first
block is at most $2\theta_1$ and every other block is at most $\theta_i$.
Combining
\eqref{eq:square-block-indicator} with \eqref{eq:square-block-pointwise}, we
obtain pointwise on either set $X$,
\begin{equation}\label{eq:square-joint-pointwise}
 \mathbf1_{\mathcal R(m)}e^{2\Re(Q_k+R)}
 \ll e^{D2^{m/2}}2^{mN/5}|R_m|^{2N}e^{2\Re Q_k^{(m)}}.
\end{equation}
Truncate the exponential of every block in $Q_k^{(m)}$ at
$\lfloor200\theta_i\rfloor$.  If the product of these truncated polynomials
is $A_{k,m}(t)$, Lemma~\ref{truncate-dirichlet-poly} changes the last factor
in \eqref{eq:square-joint-pointwise} to $|A_{k,m}(t)|^2$ up to a bounded
factor.  Its support together with $R_m^N$ is at most
\begin{equation*}
 T^{\sum_i\alpha_i\lfloor200\theta_i\rfloor}(2^{2m+2})^N\le T^{1/50}.
\end{equation*}
Lemma~\ref{complex-dirichlet-mean}, again using the auxiliary integers
$q=p^2$, with $E_1=R_m$ and $M_1=N$, therefore gives
\begin{align}\label{eq:square-joint-diagonal}
 &2^{mN/5}\int_T^{2T}|R_m(t)^NA_{k,m}(t)|^2\,dt\nonumber\\
 &\quad\ll T\exp\left(\sum_{2^{m+1}<p\le T^{\alpha_k}}
 \frac{|\mathfrak h(p)|^2}{p\log^2p}\right)
 2^{mN/5}N!(C\,2^{-m})^N+O(T^{1/4}).
\end{align}
The error follows from \eqref{eq:numerical-support-bounds},
\eqref{eq:numerical-square-growth}, and \eqref{eq:square-block-norms}.  Also
\begin{equation}\label{eq:square-block-saving}
 e^{D2^{m/2}}2^{mN/5}N!(C\,2^{-m})^N
 \ll e^{D2^{m/2}-c\,2^{3m/4}}.
\end{equation}

We record the additional calculation for $X=\mathcal S(j)$, where
$1\le j\le\mathcal J-1$.  For each
$t\in\mathcal S(j)$ choose the least $\ell\in[j+1,\mathcal J]$ for which
$|\mathcal P_{j+1,\ell}(t)|>\theta_{j+1}$; summing over all $\ell$ is an
upper bound because the integrand is non-negative.  Let
\begin{equation*}
 M=\left\lfloor\frac1{100a_K^2\alpha_{j+1}}\right\rfloor,
 \qquad
 V_{j,\ell}=\sum_{T^{\alpha_j}<p\le T^{\alpha_{j+1}}}
 \frac{\Theta_{T^{\alpha_\ell}}(p)^2|\mathfrak h(p)|^2}{p}.
\end{equation*}
We insert
$\theta_{j+1}^{-2M}|\mathcal P_{j+1,\ell}(t)|^{2M}$ in
\eqref{eq:square-joint-pointwise}.  The three sets of variables are pairwise
coprime: $R_m^N$ is supported on the integers $p^2$ with $p\le2^{m+1}$,
$A_{j,m}$ uses primes $2^{m+1}<p\le T^{\alpha_j}$, and
$\mathcal P_{j+1,\ell}^M$ uses primes
$T^{\alpha_j}<p\le T^{\alpha_{j+1}}$.  Their combined support is at most
\begin{equation*}
 T^{\sum_{i\le j}\alpha_i\lfloor200\theta_i\rfloor}
 (2^{2m+2})^N T^{\alpha_{j+1}M}
 \le T^{1/100+1/100+1/(100a_K^2)}\le T^{1/10}.
\end{equation*}
Lemma~\ref{complex-dirichlet-mean} therefore gives the complete diagonal
contribution
\begin{align*}
 T&\exp\left(\sum_{2^{m+1}<p\le T^{\alpha_j}}
 \frac{|\mathfrak h(p)|^2}{p\log^2p}\right)
 2^{mN/5}N!\left(C\,2^{-m}\right)^N
 \theta_{j+1}^{-2M}M!V_{j,\ell}^M.
\end{align*}
The last factor is bounded by \eqref{eq:second-lemma-saving}.  For the
off-diagonal term, the squared coefficient sums of $A_{j,m}$, $R_m^N$, and
$\mathcal P_{j+1,\ell}^M$, including both indicator factors, are bounded by
$T^{1/50}$, $T^{1/100}$, and $T^{1/100}$, respectively, by
\eqref{eq:numerical-support-bounds}, \eqref{eq:numerical-square-growth}, and
\eqref{eq:exceptional-l1-bound}.  Thus the off-diagonal is
$O(T^{1/10+1/50+1/100+1/100})=O(T^{7/50})$, uniformly in $m,j,\ell$.
Summing the diagonal estimates over $\ell$ and using
\eqref{eq:uniform-ell-sum} gives the factor $E_j$.  The off-diagonal
estimates do not give this factor.  They are absorbed separately: in the
present range, $2^{3m/4}\le(\log\log T)^{3/2}$, and we may increase the
lower bound on $T$ so that
\begin{equation*}
c(\log\log T)^{3/2}\le\frac12\log T.
\end{equation*}
The lower bound following \eqref{eq:explicit-error-absorption} then gives
\begin{equation*}
\mathcal J T^{7/50}e^{D2^{m/2}}
\le
TB(T,\mathbf b,\mathbf a)E_j
e^{D2^{m/2}-c\,2^{3m/4}}.
\end{equation*}
The same calculation absorbs the $O(T^{1/4})$ error in the good-set
estimate.
Consequently Proposition~\ref{frobenius-orbit-product},
Lemma~\ref{B-poly-bounds}, and \eqref{eq:square-block-saving} yield
\begin{align}
 \int_{t\in\mathcal T\cap\mathcal R(m)}e^{2\Re(Q_{\mathcal J}+R)}\,dt
 &\ll TB(T,\mathbf b,\mathbf a)
 e^{D2^{m/2}-c\,2^{3m/4}},\label{eq:square-small-good}\\
 \int_{t\in \mathcal S(j)\cap\mathcal R(m)}e^{2\Re(Q_j+R)}\,dt
 &\ll TB(T,\mathbf b,\mathbf a)E_j
 e^{D2^{m/2}-c\,2^{3m/4}}.
 \label{eq:square-small-bad}
\end{align}

For the remaining $m$, truncate $2Q_{\mathcal J}$ or $2Q_j$ at
$\lfloor200\theta_i\rfloor$.  The calculation without $R_m$ gives
\begin{align}
 \int_{t\in \mathcal T}e^{4\Re Q_{\mathcal J}}\,dt&\ll T(\log T)^D,
 \label{eq:doubled-good}\\
 \int_{t\in \mathcal S(j)}e^{4\Re Q_j}\,dt&\ll T(\log T)^D E_j^2.
 \label{eq:doubled-bad}
\end{align}
For \eqref{eq:doubled-bad} use, for each $\ell$, the indicator power
\begin{equation*}
 M_2=\left\lfloor\frac1{50a_K^2\alpha_{j+1}}\right\rfloor.
\end{equation*}
The combined support is at most
$T^{1/100+1/(50a_K^2)}\le T^{1/10}$, and the prime blocks are disjoint.
Since $V_{j,\ell}\le a_K^2$, the exceptional diagonal factor satisfies
\begin{align*}
 \theta_{j+1}^{-2M_2}M_2!V_{j,\ell}^{M_2}
 &\le\left(\frac{M_2a_K^2}{\theta_{j+1}^2}\right)^{M_2}
 \le\left(\frac{\alpha_{j+1}^{1/2}}{50a'}\right)^{M_2} \\
 &\le \exp\left(-\frac{2\log(1/\alpha_{j+1})}
 {C_0\alpha_{j+1}}\right).
\end{align*}
Here the fixed parameter conditions ensure
$M_2\ge(100a_K^2\alpha_{j+1})^{-1}$.  The coefficient-$L^1$ bounds used
above give an off-diagonal $O(T^{1/4})$.  Finally
\eqref{eq:uniform-ell-sum}, with the stronger exponent in the last equation,
shows that summing over $\ell$ costs at most
\begin{equation*}
 \exp\left(-\frac{2\log(1/\alpha_{j+1})}
 {C\alpha_{j+1}}\right)=E_j^2.
\end{equation*}
Since $\alpha_{j+1}\ge(\log\log T)^{-2}$,
\begin{equation*}
E_j^2\ge
\exp\left(-\frac4C(\log\log T)^2\log\log\log T\right).
\end{equation*}
Thus \eqref{eq:explicit-error-absorption} implies
$TE_j^2\ge\mathcal J T^{3/4}$, which absorbs the off-diagonal sum.
This proves \eqref{eq:doubled-bad}; the same mean-value calculation without
an exceptional factor proves \eqref{eq:doubled-good}.

For $m>2\log\log\log T/\log2$, the Cauchy--Schwarz inequality gives
\begin{align*}
&\int_{t\in \mathcal T\cap\mathcal R(m)}
e^{2\Re(Q_{t\in \mathcal J}+R)}\,dt\le
\left(\int_{t\in \mathcal T}e^{4\Re Q_{\mathcal J}}\,dt\right)^{1/2}
\left(\int_{t\in \mathcal R(m)}e^{4\Re R}\,dt\right)^{1/2},
\\
&\int_{t\in \mathcal S(j)\cap\mathcal R(m)}
e^{2\Re(Q_j+R)}\,dt \le
\left(\int_{t\in \mathcal S(j)}e^{4\Re Q_j}\,dt\right)^{1/2}
\left(\int_{t\in \mathcal R(m)}e^{4\Re R}\,dt\right)^{1/2}.
\end{align*}
By \eqref{eq:square-block-pointwise},
$e^{4\Re R(t)}\ll(m+2)^D$ on $\mathcal R(m)$.  Hence
\eqref{eq:square-block-measure} and
\eqref{eq:doubled-good}--\eqref{eq:doubled-bad} give
\begin{align*}
&\int_{t\in \mathcal T\cap\mathcal R(m)}
e^{2\Re(Q_{\mathcal J}+R)}\,dt\ll(m+2)^D(\log T)^D
\left(Te^{-(c/2)2^{3m/4}}+T^{3/4}\right),
\\
&\int_{t\in \mathcal S(j)\cap\mathcal R(m)}
e^{2\Re(Q_j+R)}\,dt \ll(m+2)^D(\log T)^DE_j
\left(Te^{-(c/2)2^{3m/4}}+T^{3/4}\right).
\end{align*}
In this range $2^{3m/4}>(\log\log T)^{3/2}$.  The exponential terms are
summable, and after increasing the lower bound on $T$ we have
\begin{equation*}
T^{3/4}(\log T)^D(\log\log T)^{D+1}
\le T\exp\left(-\frac c8(\log\log T)^{3/2}\right).
\end{equation*}
The sums over these $m$ are therefore respectively
\begin{equation*}
\ll T(\log T)^D
\exp\left(-\frac c8(\log\log T)^{3/2}\right),
\qquad
\ll T(\log T)^DE_j
\exp\left(-\frac c8(\log\log T)^{3/2}\right).
\end{equation*}
On $\mathcal R_*$ we have $|R(t)|\le\sum_m2^{-m/10}\ll1$, so Lemmas
\ref{first-main-lemma} and \ref{second-main-lemma} apply.  Finally,
\eqref{eq:square-small-good} and \eqref{eq:square-small-bad} are summable in
$m$.  We increase the lower bound on $T$ once more so that
\begin{equation*}
(D+C_B)\log\log T+\log C_B
\le\frac c{16}(\log\log T)^{3/2}.
\end{equation*}
Lemma~\ref{B-poly-bounds} then absorbs the remaining powers of $\log T$.
This proves both assertions.
\end{proof}
	
\end{document}